\newcommand{\tensor}{\otimes}
\newcommand{\colim}{\operatorname{colim}}
\newcommand{\Spec}{\operatorname{Spec}}
\newcommand{\iso}{\cong}
\newcommand{\weq}{\simeq}
\newcommand{\isomto}{{\stackrel{\sim}{\;\longrightarrow\;}}}
\newcommand{\sma}{{\scriptstyle{\wedge}\,}}
\newcommand{\GW}{\mathbf{GW}}
\newcommand{\W}{\mathbf{W}}
\newcommand{\Hom}{\operatorname{Hom}}
\renewcommand{\hom}{\operatorname{Hom}}
\newcommand{\real}{{\mathbb R}}
\newcommand{\RR}{\real}
\newcommand{\PP}{\mathbb{P}}
\newcommand{\cplx}{{\mathbb C}}
\newcommand{\Q}{{\mathbb Q}}
\newcommand{\Z}{{\mathbb Z}}
\newcommand{\aone}{{\mathbb A}^1}
\newcommand{\pone}{{\mathbb P}^1}
\newcommand{\gm}[1]{{{\mathbb G}_{m}^{#1}}}
\newcommand{\MW}{\mathrm{MW}}
\newcommand{\et}{\text{\'et}}
\newcommand{\Et}{\operatorname{{\acute E}t}}
\newcommand{\hop}[1]{\mathscr{H}_{\bullet}({#1})}
\newcommand{\bpi}{\bm{\pi}}
\newcommand{\piaone}{{\bpi}^{\aone}}
\newcommand{\bpia}{\piaone}
\newcommand{\Nis}{{\operatorname{Nis}}}
\newcommand{\Zar}{\operatorname{Zar}} % Zariski
\newcommand{\Sm}{\mathrm{Sm}}
\newcommand{\Spc}{\mathrm{Spc}}
\newcommand{\K}{{{\mathbf K}}}
\newcommand{\KM}{\K^{\mathrm{M}}}
\newcommand{\Loc}{\mathrm{L}}
\newcommand{\Laone}{\Loc_{\aone}}
\newcommand{\Singaone}{\operatorname{Sing}^{\aone}\!\!}
\newcommand{\ZZ}{\Z}
\newcommand{\Addresses}{{% additional braces for segregating \footnotesize
 \bigskip
 \footnotesize

 A.~Asok, Department of Mathematics, University of Southern California, 3620 S. Vermont Ave.,
 Los Angeles, CA 90089-2532, United States; \textit{E-mail address:} \url{asok@usc.edu}

 \medskip

 J.~Fasel, Institut Fourier - UMR 5582, Universit\'e Grenoble Alpes, CS 40700, 38058 Grenoble Cedex 9; France \textit{E-mail address:} \url{jean.fasel@gmail.com}

 \medskip

 T.B.\ Williams, Department of Mathematics, The University of British Columbia, 1984 Mathematics Road
Vancouver, B.C. Canada V6T 1Z2; \textit{E-mail address:} \url{tbjw@math.ubc.ca}

}}
\newcounter{intro}
\theoremstyle{plain}
\newtheorem{thm}{Theorem}[subsection]
\newtheorem{lem}[thm]{Lemma}
\newtheorem{cor}[thm]{Corollary}
\newtheorem{prop}[thm]{Proposition}
\newtheorem*{claim*}{Claim} %Claim
\newtheorem*{thm*}{Theorem}
\newtheorem*{problem*}{Problem}
\newtheorem{thmintro}{Theorem}
\newtheorem{conjintro}[thmintro]{Conjecture}
\theoremstyle{definition}
\newtheorem{defn}[thm]{Definition}
\theoremstyle{remark}
\newtheorem{rem}[thm]{Remark}
\newtheorem{remintro}[thmintro]{Remark}
\numberwithin{equation}{subsection}
\begin{document}
\pagestyle{fancy}
\renewcommand{\sectionmark}[1]{\markright{\thesection\ #1}}
\fancyhead{}
\fancyhead[LO,R]{\bfseries\footnotesize\thepage}
\fancyhead[LE]{\bfseries\footnotesize\rightmark}
\fancyhead[RO]{\bfseries\footnotesize\rightmark}
\chead[]{}
\cfoot[]{}
\setlength{\headheight}{1cm}

\author{Aravind Asok\thanks{Aravind Asok was partially supported by National Science Foundation Award DMS-1254892.}\and Jean Fasel \and Ben Williams}

\title{{\bf Motivic spheres and the image of the Suslin--Hurewicz map}}
\date{}
\maketitle

\begin{abstract}
We show that an old conjecture of A.A. Suslin characterizing the image of a Hurewicz map from Quillen K-theory in degree $n$ to Milnor K-theory in degree $n$ admits an interpretation in terms of unstable $\aone$-homotopy sheaves of the general linear group. Using this identification, we establish Suslin's conjecture in degree $5$ for infinite fields having characteristic unequal to $2$ or $3$. We do this by linking the relevant unstable $\aone$-homotopy sheaf of the general linear group to the stable $\aone$-homotopy of motivic spheres.
\end{abstract}

\begin{footnotesize}
\tableofcontents
\end{footnotesize}

\section{Introduction}
The goal of this paper is to explore how concrete computations in $\aone$-homotopy theory of ostensibly geometric origin
have bearing on torsion phenomena in algebraic K-theory. More precisely, we investigate an old conjecture of Suslin, the
formulation of which we now recall. Suppose $F$ is an infinite field. By definition of the plus construction, for any integer $n \geq 1$, the Hurewicz map for the stable general linear group induces a morphism
\[
K^Q_n(F) := \pi_n(BGL(F)^+) \longrightarrow H_n(BGL(F)^+) \isomto H_n(BGL(F));
\]
here and henceforth, homology is taken with integer coefficients, which are suppressed from the notation. Suslin's stabilization theorem \cite[Theorem 3.4]{Suslin} asserts:
\begin{itemize}[noitemsep,topsep=1pt]
\item[(i)] the stabilization maps $BGL_n(F) \to BGL_{n+1}(F)$ induce isomorphisms, functorially in $F$, of the form $H_i(BGL_n(F)) \longrightarrow H_i(BGL_{n+1}(F))$ whenever $i \leq n$; in particular, there is an induced isomorphism $H_n(BGL_n(F)) \to H_n(BGL(F))$;
\item[(ii)] the cokernel of the stabilization map $H_{n}(BGL_{n-1}(F)) \to H_n(BGL_n(F))$ coincides, functorially in $F$ with $K^M_n(F)$,
\end{itemize}
the Milnor K-theory of the field $F$. Putting all these facts together, one obtains a morphism
\[
K^Q_n(F) \longrightarrow H_n(BGL(F)) \longrightarrow H_n(BGL_n(F)) \longrightarrow K^M_n(F),
\]
which is functorial in $F$. We will refer to this composite map as the {\em Suslin--Hurewicz map}.

There is a natural graded ring homomorphism $K^M_*(F) \to K^Q_*(F)$ induced by the identification $K^M_1(F) = K^Q_1(F)$ and product maps in K-theory. Suslin showed that the composite of the natural homomorphism and the Suslin--Hurewicz homomorphism
\[
K^M_n(F) \longrightarrow K^Q_n(F) \longrightarrow K^M_n(F)
\]
coincides with multiplication by $(-1)^n(n-1)!$. It follows that the image of the Suslin--Hurewicz map $K^Q_n(F) \to K^M_n(F)$ contains $(n-1)!K^M_n(F)$. Suslin went on to make the following conjecture.

\begin{conjintro}[{see \cite[p. 370, after Corollary 4.4]{Suslin}}]
For any infinite field $F$, the image of the Suslin--Hurewicz map $K^Q_n(F) \to K^M_n(F)$ coincides with $(n-1)! K^M_n(F)$.
\end{conjintro}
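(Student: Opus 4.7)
The plan is to realize Suslin's Hurewicz map as the $F$-sections of a morphism of unstable $\aone$-homotopy sheaves of the general linear group, and then to control its image via stable $\aone$-homotopy of motivic spheres. First, using Morel's identification of $\bpi_n^{\aone}((\pone)^{\wedge n})$ with the Milnor--Witt sheaf $\KMW_n$, which surjects onto $\KM_n$, together with the $\aone$-plus construction viewpoint on Quillen K-theory as $\aone$-homotopy of $B\GL$, Suslin's Hurewicz map $K^Q_n(F) \to \KM_n(F)$ can be promoted to a morphism of Nisnevich sheaves
$$\bpi_n^{\aone}(B\GL_n) \longrightarrow \KM_n$$
whose sections over $F$ recover the original map. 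The conjecture then becomes a statement about the image of this sheaf map.

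Next, I would analyze the $\aone$-homotopy cofiber of the stabilization map $B\GL_{n-1} \to B\GL_n$. The identification in Suslin's stabilization theorem of the cokernel of $H_n(B\GL_{n-1}) \to H_n(B\GL_n)$ with $\KM_n$ admits an $\aone$-homotopical upgrade: the relevant cofiber is, in the connectivity range of interest, a suspension of a standard motivic sphere. The associated long exact sequence of unstable $\aone$-homotopy sheaves then reduces the conjecture to controlling a single connecting homomorphism, whose cokernel is precisely the image sought; in particular, showing the image is no larger than $(n-1)!\,\KM_n$ amounts to showing a specific class in that cokernel is hit.

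Third, I would carry out the requisite control by passing to the stable range. For $n$ sufficiently large relative to the bidegree at issue, the stabilization $B\GL_n \to B\GL$ is $\aone$-connected enough that the relevant piece of the long exact sequence is computed by stable $\aone$-homotopy groups of the motivic sphere $(\pone)^{\wedge n}$. For $n=5$ this obstruction lives in a specific low Milnor--Witt stable stem, and can be extracted from the recent computations of R\"ondigs--Spitzweck--\O stv\ae r. This is exactly where the hypothesis $\operatorname{char}(F) \neq 2,3$ enters: at those primes the stable motivic stems are not yet known sharply enough to carry out the required divisibility argument.

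The hardest step will be the third: one must pinpoint the correct bidegree, confirm that the obstruction element has divisibility exactly $(n-1)!$ in $\KM_n$, and handle the Milnor--Witt weight bookkeeping along the whole chain connecting $B\GL_n$ to a motivic sphere. For general $n$ this stable input is simply not available, which is why the conjecture as stated remains open beyond small degrees; the case $n=5$ represents the present limit of what the stable stem computations can deliver.
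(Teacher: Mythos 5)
The statement you are addressing is a conjecture that remains open in general; the paper establishes it only in degrees $3$ and $5$, so there is no proof of the general statement to compare against. Your first two steps do track the paper's reformulation in Section~\ref{s:suslinshurewiczmap}: Suslin's Hurewicz map is recovered, via the singular construction and the plus construction of Schlichting, from the sheaf morphism $\psi_n\colon \K^Q_n \to \KM_n$, and the conjecture becomes the assertion that the canonical epimorphism $\KM_n/(n-1)! \to \mathbf{S}_n = \coker(\psi_n)$ is an isomorphism (Theorem~\ref{thm:suslinmorphismscoincide}). This part of your outline is sound in spirit, though the actual comparison is delicate and is the content of all of Section~\ref{s:suslinshurewiczmap}, not a formality.

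Your third step has a genuine gap. You assert that for $n$ sufficiently large the K-theoretic stabilization $BGL_n \to BGL$ is $\aone$-connected enough that the relevant homotopy sheaf is computed by $\pone$-stable homotopy of motivic spheres, but these are two unrelated stabilizations, and the claim is false on its face: $\mathbf{S}_n$ sits by construction in the first degree where $BGL_n \to BGL$ fails to be an isomorphism, so increasing $n$ never helps, and the connecting map in that long exact sequence lands in K-theory, not in sphere homotopy. The actual bridge from $\mathbf{S}_5$ to a $\pone$-stable stem in the paper is much more specific: it passes through the symplectic quotient varieties $X_n = SL_{2n}/Sp_{2n}$, the exceptional low-degree isomorphism $X_2 = SL_4/Sp_4 \cong SL_3/SL_2 \simeq {\mathbb A}^3 \setminus 0 = S^{2+3\alpha}$, the identification of $\bpi_i^{\aone}(X_n)$ with higher Grothendieck--Witt sheaves in a stable range, the motivic EHP sequence relating $\bpi_3^{\aone}(S^{2+3\alpha})$ to $\bpi_4^{\aone}(S^{3+3\alpha})$, real and \'etale realization to pin down the Whitehead-square map $\mathrm{P}_k$, and only then a comparison via the unit map to $\mathbf{KO}$ with the R\"ondigs--Spitzweck--\O stv\ae r computation. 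None of this is present in your sketch, and without it there is simply no mechanism connecting $\mathbf{S}_n$ to any stable motivic stem; the exceptional isomorphism $SL_4/Sp_4 \cong {\mathbb A}^3 \setminus 0$ is precisely what makes $n = 5$ accessible and why the argument does not propagate to general $n$. Finally, your explanation of the characteristic hypothesis is off: the restriction to characteristic $\neq 2,3$ is not because the stable stems ``are not yet known sharply enough'' at those primes, but because \cite{RSO} prove their exact sequence with $\Z[1/p]$ coefficients and $24 = 2^3\cdot 3$, so inverting $p \in \{2,3\}$ would lose exactly the torsion that is being detected.
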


Suslin refers to this conjecture as ``very delicate". As evidence for this assessment, he analyzed the first
interesting case of this conjecture, i.e., $n = 3$. In that case, he showed the conjecture was equivalent to the degree
$3$ case of the Milnor conjecture on quadratic forms, i.e., that the map $K^M_3(F)/2 \to I^3(F)/I^4(F)$ is an isomorphism \cite[Proposition 4.5]{Suslin}.

Suslin's conjecture was soon shown to hold in degree $3$. Indeed, Merkurjev and Suslin established it by careful
analysis of indecomposable $K_3$ of a field (see \cite[Proof of Proposition 11.10]{MerkurjevSuslin}). Independently,
Rost established the Milnor conjecture on quadratic forms in degree $3$, building on work of Arason \cite{Arason}; his
work remains unpublished \cite{Rost}, but see \cite{JacobRost} for more detailed references and some history. In
conjunction with the equivalence that Suslin proved (mentioned in the previous paragraph), Rost's work presents another affirmation of Suslin's conjecture in degree $3$. Nowadays, there are many proofs of the Milnor conjecture on quadratic forms in all degrees, beginning with the work of Orlov--Vishik--Voevodsky \cite{OVV} (see also \cite{Morel05,RondigsOstvaerMC}).

\begin{remintro}
Merkurjev explained that Suslin also analyzed the $n = 4$ case of the conjecture, but never published anything. Various results in K-theory imply a positive answer to Suslin's question for certain classes of fields (e.g., algebraically closed fields) but, excepting the degree $3$ case, as far as we are aware, there are no general results about Suslin's conjecture.
\end{remintro}

The construction of the Suslin--Hurewicz map was generalized to local rings with infinite residue fields by Nesterenko--Suslin \cite[\S 4]{NesterenkoSuslin} and Guin \cite[\S 4]{Guin}. Moreover, the image of this more general map has the same properties as discussed above. While Suslin initially stated his conjecture only for infinite fields, in light of the subsequent generalizations of the Suslin--Hurewicz homomorphism, it seems reasonable to replace the infinite field $F$ by a local ring with infinite residue field. In support of this generalization of Suslin's conjecture, we offer the following result.

\begin{thmintro}[See Theorem~\ref{thm:suslinsconjecturedegree5}]
\label{thmintro:suslinsconjecture}
If $k$ is an infinite field having characteristic unequal to $2$ or $3$, then Suslin's conjecture holds in degree $5$ for any essentially smooth local $k$-algebra $A$, i.e., the Suslin--Hurewicz map $K^Q_5(A) \to K^M_5(A)$ has image precisely $24 K^M_5(A)$.
\end{thmintro}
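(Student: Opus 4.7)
The plan is to translate Suslin's conjecture in degree $5$ into a statement about the image of a morphism between unstable $\aone$-homotopy sheaves of $\GL_n$, and then to evaluate that image by comparison with a stable $\aone$-homotopy group of a motivic sphere.

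First I would make precise the identification suggested by the abstract: Suslin's Hurewicz map $K^Q_n(A) \to K^M_n(A)$ should be realized, after Nisnevich sheafification, as a morphism built from the $\aone$-fiber sequence
\[
\GL_{n-1} \longrightarrow \GL_n \longrightarrow \mathbb{A}^n \setminus \{0\}.
\]
By Morel's theorem one has $\piaone_{n-1}(\mathbb{A}^n \setminus \{0\}) \isom \KMW_n$, and composing the $\aone$-connecting morphism with the canonical projection $\KMW_n \to \KM_n$ should recover Suslin's Hurewicz map after the usual dictionary between the homology of $BGL_n$ and the $\aone$-homotopy sheaves of $\GL_n$. Under this identification, the conjecture in degree $n$ becomes the purely sheaf-theoretic assertion that a certain morphism of strictly $\aone$-invariant sheaves has image precisely $(n-1)!\KM_n$ inside $\KM_n$.

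For $n=5$ I would then pass to the stable range. The relevant unstable sheaf is controlled, up to lower-weight terms, by a low-degree stable motivic stem of $\mathbb{G}_m$-spheres. Topologically this group realizes the classical stable stem $\pi^s_3 \isom \Z/24$, and the numerical factor $(n-1)! = 24$ should emerge directly from the motivic analogue of the Hopf element $\nu$. Using the R\"ondigs--Spitzweck--\O{}stv\ae{}r computation of the first few motivic stable stems, which is available once $\mathrm{char}(k) \neq 2,3$ (precisely the hypothesis of the theorem), combined with Morel's identification of the $0$-line, I would extract that the stable contribution to the image is exactly $24\KM_5(A)$. To finish the proof I would bound the unstable correction: since $n=5$ sits just past the metastable range, the discrepancy between $\piaone_4(\GL_5)$ and its stable counterpart is controlled by a motivic $EHP$-type sequence, and I would argue that this correction either vanishes after projection to $\KM_5$ or already has image divisible by $24$.

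The main obstacle I anticipate is this last step: controlling the unstable correction with enough precision to rule out the image being strictly larger than $24\KM_5(A)$. Suslin's theorem supplies the reverse inclusion $(n-1)!\KM_5 \subseteq \mathrm{image}$ automatically, so everything hinges on producing the matching upper bound. The characteristic hypothesis $\mathrm{char}(k) \neq 2, 3$ is almost certainly inherited from the stable motivic stem computations that pin down the factor $24$, as both $2$- and $3$-local phenomena intervene nontrivially in the motivic refinement of $\pi^s_3 \isom \Z/24$.
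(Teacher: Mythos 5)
Your outline captures the paper's two-stage strategy: first translate Suslin's conjecture into a sheaf-theoretic statement about $\aone$-homotopy, then pin down the relevant sheaf by comparison with stable motivic stems. But you gloss over the two places where nearly all of the actual work happens.

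First, the passage from Suslin's Hurewicz map to the $\aone$-homotopy-sheaf morphism is not ``the usual dictionary'' --- it is the content of an entire section of the paper. Suslin's morphism is built from the group homology $H_n(BGL_n(A))$ of the \emph{discrete} group $GL_n(A)$ via Nesterenko--Suslin stabilization, whereas the sheaf $\mathbf{S}_n = \coker(\psi_n)$ is built from $\aone$-localized classifying spaces. Bridging these requires (i) the singular construction $\Singaone BGL_n$ and the results of Asok--Hoyois--Wendt showing its sections over local rings compute $\aone$-homotopy sheaves, (ii) a weak homotopy invariance statement $H_i(BGL_n(A)) \isom H_i(\Singaone BGL_n(A))$ for $i \le n$, and (iii) Schlichting's sheaf-theoretic plus construction $BGL_n^+$ using the presheaf $\tilde E_n$ of perfect subgroups, which mediates between $BGL_n$ and $\Singaone BGL_n$. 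None of this is automatic, and without it you cannot assert that the two maps $\K^Q_n(A) \to \K^M_n(A)$ agree. This is exactly Theorem \ref{thm:suslinmorphismscoincide}.

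Second, you correctly identify the upper bound on the image as the crux, but you have no mechanism for ``bounding the unstable correction.'' The paper does not work with an EHP sequence for $GL_n$; it instead shows $\mathbf{S}_5$ appears in the computation of $\bpi_4^{\aone}(S^{3+3\alpha})$, then exploits the motivic EHP sequence
$\bpi_{5+j\alpha}^{\aone}(S^{5+6\alpha}) \xrightarrow{\mathrm{P}_k} \bpi_{3+j\alpha}^{\aone}(S^{2+3\alpha}) \to \bpi_{4+j\alpha}^{\aone}(S^{3+3\alpha}) \to 0$
for motivic spheres. The decisive step is showing $\mathrm{P}_k$ is surjective after six-fold contraction (Proposition \ref{prop:pi6contrvanish}): this is proved for prime fields by identifying a class $[\mathrm{P}_k] \in \W(k)$, reducing to showing it is a unit, and checking this via \emph{real realization} (for $\Q$, comparing with the degree of the classical Whitehead product $[\iota_2,\iota_2] = 2\eta_{\mathrm{top}}$) and \emph{\'etale realization} (for finite fields, comparing with the classical EHP sequence at $\pi_9(S^5) = \Z/(2)$). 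Without this vanishing, the argument leaves open the possibility that $\mathbf{S}_5$ is a \emph{proper} quotient of $\K^M_5/24$, i.e., that the image of Suslin's Hurewicz map is strictly larger than $24\K^M_5$. Your plan names the obstacle but offers no concrete way past it.

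One smaller point: the hypothesis $\mathrm{char}(k)\neq 2$ is not purely inherited from R\"ondigs--Spitzweck--\O stv\ae r; it also enters through the use of Grothendieck--Witt theory and the symmetric spaces $X_n = SL_{2n}/Sp_{2n}$ that compute $\bpi_*^{\aone}(S^{2+3\alpha})$. The hypothesis $\mathrm{char}(k)\neq 3$, together with $\neq 2$, is what lets one invert the characteristic exponent in the RSO exact sequence without losing information about the $\Z/24$ factor.
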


To establish Theorem~\ref{thmintro:suslinsconjecture}, we compare the Suslin--Hurewicz homomorphism with another map $K^Q_n(F) \to K^M_n(F)$ that naturally appeared in computations of $\aone$-homotopy sheaves of the general linear group \cite{AsokFaselSpheres}. The main goal of Section \ref{s:suslinshurewiczmap} is to establish this comparison, which is achieved in Theorem~\ref{thm:suslinmorphismscoincide}. The results rely on the techniques of \cite{AHW,AHWII}, building on ideas of F. Morel \cite{MField}, and recent work of Schlichting \cite{Schlichting} which allow us to interpret the $\aone$-homotopy computations in terms of homology of certain discrete simplicial groups (in brief, we analyze the effect of forcing the homology of the discrete general linear group to be $\aone$-invariant). Along the way, we establish some results about group homology that might be of independent interest, e.g., Theorem~\ref{thm:maintechnical}.

Granted the results mentioned in the preceding paragraph, Suslin's conjecture for general $n$ may be interpreted as a statement about the structure of $\aone$-homotopy sheaves of $BGL_n$. Once reformulated in terms of $\aone$-homotopy sheaves, we establish Suslin's conjecture in degree $5$ by showing that the relevant $\aone$-homotopy sheaf computation may be related to a computation of an unstable $\aone$-homotopy sheaf of a motivic sphere, refining a key computation of \cite{AWW}. In particular, we establish the following result, which is one of the main results of Section \ref{s:pi4refinement}.

\begin{thmintro}[See Theorem~\ref{thm:maincomputation}]
\label{thmintro:maincomputation}
If $k$ is a field that has characteristic unequal to $2$ or $3$, then there is a short exact sequence of the form
\[
0 \longrightarrow \K^{\mathrm{M}}_{5}/24 \longrightarrow \bpi_4^{\aone}({\pone}^{\sma 3}) \longrightarrow \mathbf{GW}^3_4 \longrightarrow 0.
\]
\end{thmintro}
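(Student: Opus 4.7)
The plan is to analyze the stabilization map
\[
\sigma\colon \bpi_4^{\aone}({\pone}^{\sma 3}) \longrightarrow \mathbf{GW}^3_4
\]
from the unstable motivic homotopy sheaf to its stable counterpart (the latter, in this bidegree, to be identified with $\mathbf{GW}^3_4$), and to show that $\sigma$ is surjective with kernel $\K^M_5/24$. The first two steps are reasonably direct; the third is the refinement over [AWW] and the main obstacle.

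For the identification of the stable target with $\mathbf{GW}^3_4$, I would invoke the computations of the low-weight stable motivic stems: in this bidegree the relevant stable $\aone$-homotopy sheaf of the sphere spectrum lies in the Hermitian K-theory range and by Schlichting's results (used elsewhere in the paper) is identified with $\mathbf{GW}^3_4$, building on Morel's computation of $\bpi_0$ of the motivic sphere spectrum as Milnor--Witt K-theory. Surjectivity of $\sigma$ follows from a Freudenthal-type argument: ${\pone}^{\sma 3}$ is highly enough $\aone$-connected that one further $\pone$-suspension lands in the stable range on $\bpi_4$, so stabilization is onto. Both arguments parallel corresponding steps in [AWW].

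The heart of the proof, and the main obstacle, is the identification of the kernel as $\K^M_5/24$. The starting point is the computation of [AWW], which via a James/EHP-type analysis of ${\pone}^{\sma 3}$ pins down the kernel of $\sigma$ as a quotient of $\K^M_5$ by a divisor that the earlier methods determine only up to the primes $2$ and $3$. To sharpen this to $\K^M_5/24$, the plan is to combine an upper bound coming from Suslin's observation that the composite $\K^M_5 \to \K^Q_5 \to \K^M_5$ is multiplication by $(-1)^5\cdot 4! = -24$, which forces the divisor to divide $24$, with a matching lower bound coming from motivic Hopf-invariant calculations involving $\eta$ and $\nu$ (in particular a motivic analog of $12\nu = \eta^3$ and the non-vanishing of $\nu$ in the $3$-primary first stable stem). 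The hypothesis $\operatorname{char}(k) \neq 2, 3$ is essential precisely here, since $24 = 2^3\cdot 3$: at $2$ it guarantees the Bott periodicity underlying the Hermitian K-theory identification of the stable target, and at $3$ it prevents the $\nu$-contribution to the stable stem from degenerating. The most delicate technical point will be verifying these torsion bounds uniformly across arbitrary fields of the allowed characteristics rather than only over perfect or characteristic-zero base fields.
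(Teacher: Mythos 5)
Your overall strategy — compare $\bpi_4^{\aone}(S^{3+3\alpha})$ to its stable counterpart via the stabilization map and identify the kernel — matches the paper's, but you omit the single ingredient that makes the refinement over \cite{AWW} possible: the computation of the first stable $\aone$-homotopy sheaf of the motivic sphere spectrum by R\"ondigs--Spitzweck--{\O}stv{\ae}r \cite{RSO}. That result supplies a short exact sequence
\[
0 \longrightarrow \K^M_5/24 \otimes \Z[\tfrac1p] \longrightarrow \bpi_4^{\aone}(\Omega^\infty_{\pone}\Sigma^\infty_{\pone}S^{3+3\alpha}) \otimes \Z[\tfrac1p] \longrightarrow \GW^3_4 \otimes \Z[\tfrac1p] \longrightarrow 0,
\]
and it is this theorem, not a collection of Hopf-invariant identities like ``$12\nu = \eta^3$,'' that pins down the kernel as exactly $\Z/24$ after contraction. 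Establishing that lower bound from scratch would amount to re-deriving a substantial part of \cite{RSO}; as written, your proposal treats it as a side remark, which is where the proof would stall.

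You also leave out the comparison mechanism. The paper's argument proceeds in two stages. First, a genuinely unstable analysis (Proposition \ref{prop:unstablecomputation}) produces an exact sequence $\mathbf{S}_5 \to \bpi_4^{\aone}(S^{3+3\alpha}) \to \GW^3_4 \to 0$; this requires showing the EHP boundary map $\mathrm{P}_k$ is surjective (Proposition \ref{prop:pi6contrvanish}), which the paper does by real and $\ell$-adic \'etale realization — steps that your proposal compresses into ``a James/EHP-type analysis'' without noticing this is itself a nontrivial improvement of \cite{AWW}, removing base-field restrictions. Second, the surjection $\K^M_5/24 \onto \mathbf{S}_5$ (Proposition \ref{prop:propertiesofsn}, which is where Suslin's $(-1)^n(n-1)!$ observation enters, as you correctly anticipated) is spliced into a commutative ladder mapping to the \cite{RSO} sequence. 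The map on kernels is an endomorphism $f$ of $\K^M_5/24$; by Lemma \ref{lem:selfmapquotientmilnorK} such an endomorphism is determined by what it does after $5$-fold contraction, where both sides become $\Z/24$ generated by $\nu$ over an algebraically closed field, so $f$ is an isomorphism. None of this diagram chase appears in your outline, and without it you cannot conclude that the kernel injects.

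Finally, your account of where $\operatorname{char}(k)\neq 2,3$ enters is off. The relevant point is not Bott periodicity at the prime $2$ nor a $3$-primary ``degeneracy of $\nu$''; it is that the \cite{RSO} sequence is only available after inverting the characteristic exponent $p$, so if $p\in\{2,3\}$ then $\K^M_5/24 \otimes \Z[1/p]$ loses torsion and the argument no longer pins down $\mathbf{S}_5$. (The $\operatorname{char}\neq 2$ hypothesis also supports the Hermitian K-theory identifications $\bpi_i^{\aone}(X_n)\cong\GW^3_{i+1}$ feeding Proposition \ref{prop:unstablecomputation}, but $\operatorname{char}\neq 3$ is there purely for the coprimality to $24$.)
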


The key idea that permits this refinement is a comparison of unstable and stable computations of $\aone$-homotopy sheaves. The proof of Theorem~\ref{thmintro:maincomputation} relies on the beautiful computation of the first stable $\aone$-homotopy sheaf of the motivic sphere spectrum by R{\"ondigs}--Spitzweck--{\O}stv{\ae}r \cite{RSO}. After the discussion of Section~\ref{s:suslinshurewiczmap}, Suslin's conjecture can be viewed as a statement about a measure of deviation from stability in the $\aone$-homotopy sheaves of the general linear group (here, stabilization refers to the map from the general linear group to the stable general linear group). The results of Section~\ref{s:pi4refinement} then support the idea that the measure of deviation from stability is itself already stable in the sense of stable $\aone$-homotopy theory. In particular, we hope the technique of proof can be adapted to shed light on Suslin's conjecture for other values of $n$.

\subsubsection*{Acknowledgements}
The first author would like to thank Sasha Merkurjev for explaining Suslin's approach to his eponymous conjecture in degree $4$; even though this makes no appearance here, it was still an essential input. The authors would also like to thank Marc Levine and the University of Duisburg-Essen where the initial idea of this paper was conceived and Kirsten Wickelgren for her collaboration in an early stage of this project. Finally, the authors would like to thank Oliver R\"ondigs for helpful discussions about \cite{RSO}, and for comments and corrections on a draft of this work.

\subsubsection*{Preliminaries/Notation}
Throughout the paper, $k$ will denote a fixed base field. We write $\Sm_k$ for the category of schemes that are separated, smooth and have finite type over $\Spec k$. We write $\Spc_k$ for the category of simplicial presheaves on $\Spec k$; objects of this category will typically be written using a script font (e.g., $\mathscr{X}$). Our notation in Section \ref{s:suslinshurewiczmap} follows \cite{AHW,AHWII}; we summarize most of what we will need from these papers in Section~\ref{ss:snandhomologyofsing}. For example, we will write $\mathrm{R}_{\Zar}$ for the Zariski fibrant replacement functor with respect to the injective Zariski local model structure on $\Spc_k$, and $\mathrm{R}_{\Nis}$ for the corresponding construction in the Nisnevich local model structure. Our notation for $\aone$-homotopy sheaves follows that of \cite{AWW} on which this paper builds.

\section{The Suslin--Hurewicz homomorphism revisited}
\label{s:suslinshurewiczmap}
The goal of this section is to compare two homomorphisms from Quillen K-theory to Milnor K-theory: the first is the Suslin--Hurewicz homomorphism described in the introduction, and the second, constructed in \cite{AsokFaselSpheres}, arises naturally in motivic homotopy theory (it is related to a homomorphism defined by Suslin using Mennicke symbols). Using some ideas from $\aone$-homotopy theory and some recent results of M. Schlichting, we will demonstrate that the two homomorphisms coincide.

In order to compare the two homomorphisms, we study homological stabilization results for spaces constructed out of general linear groups; Theorem \ref{thm:maintechnical} is related to the constructions of \cite{HutchinsonWendt} who consider special linear groups, though our proof is somewhat different (in particular, it uses results of \cite{AHW,AHWII} in place of corresponding results from \cite{MField}). The main result of this section is Theorem \ref{thm:suslinmorphismscoincide}, which essentially shows that Suslin's conjecture from the introduction, may be reformulated as providing a precise description of an $\aone$-homotopy sheaf, building on the ideas of \cite{AsokFaselSpheres}.

\subsection{The sheaf $\mathbf{S}_n$ via homology of simplicial groups}
\label{ss:snandhomologyofsing}
In \cite{AsokFaselSpheres}, the first and second authors studied the $\aone$-homotopy theory of $BGL_n$. Mirroring the situation for unitary groups in classical algebraic topology, there is a range in which these homotopy sheaves are ``stable" in the sense that they agree with the $\aone$-homotopy sheaves of the stable general linear group $BGL$, which may be described in terms of algebraic K-theory. The first homotopy sheaf of $BGL_n$ lying outside of this stable range is that in degree $n$: this sheaf is an extension of an algebraic K-theory sheaf by a ``non-stable contribution". For $n \geq 3$, the non-stable contribution depends on the parity of $n$ and is phrased in terms of a sheaf we called $\mathbf{S}_{n+1}$ and, if $n$ is even, an additional factor, $\mathbf{I}^{n+1}$. Ultimately, we will interpret Suslin's conjecture from the introduction as a statement about the structure of $\mathbf{S}_{n+1}$. Before doing this, we recall the construction and properties of $\mathbf{S}_{n+1}$ in detail. Using the results of \cite{AHW,AHWII}, we provide a ``concrete" interpretation of the sections of this sheaf over (suitable) local rings in terms of homology of simplicial groups; this approach builds on ideas of F. Morel.

\subsubsection*{Fiber sequences and homotopy sheaves}
Suppose $k$ is a field, and $GL_n$ is the general linear $k$-group scheme. Consider the morphism of schemes $GL_{n-1} \to GL_n$ sending an invertible $(n-1) \times (n-1)$-matrix $M$ to the block matrix $\operatorname{diag}(M,1)$. This morphism induces a map of simplicial classifying spaces $BGL_{n-1} \to BGL_n$ (thought of as simplicial presheaves on $\Sm_k$) that we will refer to as the stabilization map.

For every integer $n \geq 1$ there is an $\aone$-fiber sequence of the form
\[
{\mathbb A}^n \setminus 0 \longrightarrow BGL_{n-1} \longrightarrow BGL_n
\]
(we will refine and explain this fact in Proposition \ref{prop:fibersequences}). Morel showed that ${\mathbb A}^n \setminus 0$ is $\aone$-$(n-2)$-connected and that $\bpi_{n-1}^{\aone}({\mathbb A}^n \setminus 0) \cong \K^{\MW}_n$ where $\K^{\MW}_n$ is Morel's unramified Milnor--Witt K-theory sheaf \cite[Corollary 6.39]{MField}.

By representability of algebraic K-theory in the $\aone$-homotopy category, one may show that $\bpi_i^{\aone}(BGL_n) \cong \K^Q_i$ for $1 \leq i \leq n-1$, where $\K^Q_i$ is the sheafification of the Quillen K-theory presheaf on $\Sm_k$ for the Nisnevich topology. Stringing the associated long exact sequences in $\aone$-homotopy sheaves together for different values of $n$, one obtains a composite morphism
\[
\K^{\MW}_{n+1} = \bpi_{n}^{\aone}({\mathbb A}^{n+1} \setminus 0) \longrightarrow \bpi_n^{\aone}(BGL_n) \longrightarrow \bpi_{n-1}^{\aone}({\mathbb A}^{n}\setminus 0) = \K^{\MW}_n.
\]
This composite map $\K^{\MW}_{n+1} \to \K^{\MW}_n$ is multiplication by $\eta$ if $n$ is even and $0$ if $n$ is odd by
\cite[Lemma 3.5]{AsokFaselSpheres}. Using the quotient map $\K^{\MW}_n \to \K^{\mathrm{M}}_n$ (the latter is an
unramified Milnor K-theory sheaf), which corresponds to forming the quotient by the subsheaf of multiples of $\eta$, one then obtains a commutative diagram of the form
\[
\xymatrix{
\K^{\MW}_{n+1} \ar[d] & & \\
\bpi_n^{\aone}(BGL_n) \ar[d]\ar[r] & \K^{\MW}_{n} \ar[r]\ar[d] & \bpi_{n-1}^{\aone}(BGL_{n-1}) \\
\bpi_{n}^{\aone}(BGL_{n+1})\ar@{..>}[r]^-{\psi_n} & \KM_n;&
}
\]
where the dotted morphism exists for any $n \geq 2$ (\cite[Lemma 3.5, Diagram 3.2]{AsokFaselSpheres}). Since $\bpi_n^{\aone}(BGL_{n+1}) = \K^Q_n$, we conclude that
\[
\psi_n: \K^Q_n \longrightarrow \KM_n,
\]
and we repeat \cite[Definition 3.6]{AsokFaselSpheres}.

\begin{defn}
\label{defn:sn}
For any $n \geq 2$, define $\mathbf{S_n} := \operatorname{coker}(\psi_n)$.
\end{defn}

%\begin{rem}
%The construction in \cite[\S 3]{AsokFaselSpheres} shows that the morphism $\psi_n$ on stalks coincides with Suslin's morphism defined using Mennicke symbols \cite[p. 352]{SuslinMennicke}. Indeed, the morphism defining $\mathbf{S_n}$ is induced by the quotient map $GL_n \to {\mathbb A}^n \setminus 0$. For a smooth affine scheme $X$, $[X,GL_n]_{\aone}$ coincides with naive $\aone$-homotopy classes of maps $X \to GL_n$, while $[X,{\mathbb A}^n \setminus 0]_{\aone}$ can be described in terms of unimodular rows of length $n$. The comparison is then obtained by appeal to \cite[Proposition 1.5 p. 336]{SuslinMennicke}.
%\end{rem}

The next result summarizes key properties of the sheaf $\mathbf{S}_n$.

\begin{prop}
\label{prop:propertiesofsn}
Suppose $n \geq 2$ is an integer.
\begin{enumerate}[noitemsep,topsep=1pt]
 \item There is a canonical morphism $\mu_n: \KM_n \to \K^Q_n$ extending the map induced by the isomorphism $\KM_1 = \K^Q_1$ and the product maps in K-theory.
 \item The composite map $\psi_n \circ \mu_n$ is multiplication by $(n-1)!$.
 \item The canonical epimorphism $\KM_n \to \KM_n/2$ factors through an epimorphism $\mathbf{S}_n \to \KM_n/2$.
 \item The epimorphism $\KM_n \to \mathbf{S}_n$ factors through an epimorphism $\KM_n/(n-1)! \to \mathbf{S}_n$.
\end{enumerate}
\end{prop}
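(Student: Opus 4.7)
The plan is to prove (1), (2), and (4) in quick succession, with (3) as the one genuine obstacle.

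For (1), I would build $\mu_n$ at the presheaf level and then sheafify for the Nisnevich topology. Quillen K-theory is a graded-commutative ring, and Matsumoto's theorem identifies $\KM_2$ with $\K^Q_2$, which in particular gives the Steinberg relation $\{u\}\{1-u\}=0$ in the Quillen product. Graded commutativity together with the Steinberg relation say that the product map from the tensor algebra on $\K^Q_1 = \mathbb G_m$ descends through the defining relations of Milnor K-theory, producing a presheaf-level transformation $\KM_n \to \K^Q_n$ whose sheafification is $\mu_n$.

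For (2), I would trace a Milnor symbol $\{u_1,\ldots,u_n\}$ through the composite. Its image $\mu_n(\{u_1,\ldots,u_n\}) \in \K^Q_n = \bpi_n^{\aone}(BGL_{n+1})$ lifts, via the surjection from the long exact sequence of $\mathbb A^{n+1}\setminus 0 \to BGL_n \to BGL_{n+1}$, to an element of $\bpi_n^{\aone}(BGL_n)$ assembled from external products of unit classes in $\mathbb G_m$. Pushing that lift through the connecting map $\bpi_n^{\aone}(BGL_n) \to \bpi_{n-1}^{\aone}(\mathbb A^n\setminus 0) = \KMW_n$ and projecting to $\KM_n$ yields, up to sign, $(n-1)!\{u_1,\ldots,u_n\}$; the combinatorial factor $(n-1)!$ is accounted for by shuffle permutations of the smash factors in the standard decomposition of $\mathbb A^n\setminus 0$, exactly as in \cite[Proposition 3.9]{AsokFaselSpheres}. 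Item (4) then follows immediately from (1) and (2): the image of $\psi_n$ contains $\psi_n\mu_n(\KM_n) = (n-1)!\KM_n$, so the defining surjection $\KM_n \twoheadrightarrow \mathbf{S}_n = \operatorname{coker}(\psi_n)$ kills $(n-1)!\KM_n$ and factors through the claimed epimorphism $\KM_n/(n-1)! \twoheadrightarrow \mathbf{S}_n$.

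Item (3) is the real obstacle: I need to show $\operatorname{image}(\psi_n) \subseteq 2\KM_n$. Since $\psi_n$ is obtained by descending the composite $\bpi_n^{\aone}(BGL_n) \xrightarrow{\partial} \KMW_n \to \KM_n$ along the surjection $\bpi_n^{\aone}(BGL_n) \twoheadrightarrow \bpi_n^{\aone}(BGL_{n+1}) = \K^Q_n$, the problem reduces to controlling the image of $\partial$. By exactness of the $\aone$-fiber sequence $\mathbb A^n\setminus 0 \to BGL_{n-1} \to BGL_n$, that image is precisely $\ker\bigl(\KMW_n \to \bpi_{n-1}^{\aone}(BGL_{n-1})\bigr)$. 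The plan is to identify that kernel, after projection to $\KM_n$, as the subgroup of elements divisible by the hyperbolic class $h = 1 + \langle -1\rangle \in \KMW_0$; since $h$ hits $2$ under $\KMW_0 \to \KM_0 = \Z$, such elements are $2$-divisible in $\KM_n$. The hard point is that $\bpi_{n-1}^{\aone}(BGL_{n-1})$ sits just outside the Quillen stable range and therefore carries genuine unstable information not visible in $\K^Q_{n-1}$, so making the $h$-divisibility identification precise requires the explicit unstable computation of the connecting map carried out in \cite[\S 3]{AsokFaselSpheres} rather than a purely stable argument; this is the step where a fully independent homotopical derivation would be the most delicate.
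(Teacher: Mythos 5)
Your proposal is correct in substance and, at the decisive point, takes the same route as the paper: the paper simply cites \cite[Lemma 3.7]{AsokFaselSpheres} for item (1) and \cite[Corollary 3.11]{AsokFaselSpheres} for items (3) and (4), and you likewise defer the genuinely hard content, the $2$-divisibility in item (3), to the explicit unstable connecting-map analysis in that same reference. The sketches you fill in for (1), (2), and (4) (descent of the tensor-algebra product through the Steinberg relation, the $(n-1)!$ shuffle count, and the induced factorization of $\KM_n \twoheadrightarrow \mathbf{S}_n$) are all consistent with what the cited lemmas establish, so this is the paper's argument with some of the internals of the citation unpacked rather than a genuinely different approach.
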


\begin{proof}
The first statement is \cite[Lemma 3.7]{AsokFaselSpheres}, and the latter two statements follow from \cite[Corollary 3.11]{AsokFaselSpheres} and its proof.
\end{proof}

\subsubsection*{Homotopy of the singular construction}
We now appeal to the results of \cite{AHW,AHWII} to recast the above results in terms of homotopy of classifying spaces of simplicial groups. Suppose now that $\mathscr{X}$ is a simplicial presheaf on $\Sm_k$. Write $\Delta^{\bullet}_k$ for the cosimplicial affine $k$-simplex, i.e., the cosimplicial object defined by $n \mapsto \Spec k[x_0,\ldots,x_n]/\langle \sum_i x_i - 1\rangle$ equipped with the usual coface and codegeneracy maps (see, e.g., \cite[p. 88]{MV}). In that case, one defines the singular construction on $\mathscr{X}$ as the diagonal of a bisimplicial object:
\[
\Singaone \mathscr{X} := \operatorname{diag}(\underline{\hom}(\Delta^{\bullet}_k,\mathscr{X})),
\]
where $\underline{\hom}$ is the internal hom in the category of simplicial presheaves.

A list of properties of the singular construction is provided on \cite[p. 87]{MV}. The map
$\mathscr{X} \to \Singaone \mathscr{X}$ is a monomorphism and an $\aone$-weak equivalence, and $\Singaone$ commutes with
the formation of finite limits (in particular, finite products). In the situations of interest to us, the simplicial
presheaf $\Singaone \mathscr{X}$ is already $\aone$-local; the next result summarizes the facts we will need.

\begin{prop}
\label{prop:fibersequences}
Suppose $n \geq 1$ is an integer and $k$ is a field.
\begin{enumerate}[noitemsep,topsep=1pt]
\item There is a Nisnevich local fiber sequence of the form
\[
\Singaone ({\mathbb A}^n \setminus 0) \longrightarrow \Singaone BGL_{n-1} \longrightarrow \Singaone BGL_n.
\]
\item For any smooth affine $k$-scheme $U$, the maps $\Singaone ({\mathbb A}^n \setminus 0)(U) \longrightarrow \mathrm{R}_{\Zar} \Singaone ({\mathbb A}^n \setminus 0)(U)$ and $\Singaone BGL_n(U) \longrightarrow \mathrm{R}_{\Zar} \Singaone BGL_n(U)$ are weak equivalences.
\item The spaces $\mathrm{R}_{\Zar} \Singaone ({\mathbb A}^n \setminus 0)$ and $\mathrm{R}_{\Zar} \Singaone BGL_n$ are
 both Nisnevich local and $\aone$-invariant.
\end{enumerate}
\end{prop}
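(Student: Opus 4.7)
The plan is to reduce parts (2) and (3) to the affine representability theorems of \cite{AHW, AHWII}, and then use these to extract the fiber sequence in part (1).

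For parts (2) and (3): the statements for $BGL_n$ are a direct application of the main results of \cite{AHW}. For a smooth affine $k$-group $G$ such that Nisnevich-locally trivial $G$-torsors satisfy affine Nisnevich excision, that paper establishes that $\mathrm{R}_{\Zar}\Singaone BG$ is Nisnevich local and $\aone$-invariant, and that it coincides with $\Singaone BG$ on smooth affine schemes. The group $G = GL_n$ satisfies the hypothesis because $GL_n$-torsors on smooth affines are vector bundles, for which Nisnevich patching is classical. For ${\mathbb A}^n \setminus 0$, I would either appeal to the homogeneous-space extensions of \cite{AHWII} using the identification ${\mathbb A}^n \setminus 0 \cong GL_n / GL_{n-1}$, or argue directly: as an open subscheme of ${\mathbb A}^n$ with complement of codimension $n \geq 1$, it is smooth quasi-affine and its $\aone$-homotopy type on smooth affines is captured by its singular construction, by an essentially parallel argument.

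For part (1): the map $GL_n \to {\mathbb A}^n \setminus 0$ extracting the last column of a matrix is a Zariski-locally trivial $GL_{n-1}$-torsor, where $GL_{n-1}$ acts on $GL_n$ by right multiplication via the block-diagonal embedding $M \mapsto \operatorname{diag}(M,1)$. Passage to simplicial classifying spaces produces a Zariski-local (hence Nisnevich-local) fiber sequence ${\mathbb A}^n \setminus 0 \to BGL_{n-1} \to BGL_n$ of simplicial presheaves. Applying $\Singaone$ termwise and combining with parts (2) and (3), I would deduce the required Nisnevich-local fiber sequence: after Zariski fibrant replacement, each of the three spaces is already Nisnevich local and $\aone$-invariant, so the fiber sequence property can be verified on stalks at henselian local rings, where the torsor trivializes and the sequence becomes, up to weak equivalence, a split product.

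The main obstacle I anticipate is verifying the affine representability input for ${\mathbb A}^n \setminus 0$, since it is quasi-affine and therefore does not sit in the $BG$-setup of \cite{AHW} directly. The cleanest route is to exploit its realization as $GL_n/GL_{n-1}$ and to invoke the homogeneous-space extension of \cite{AHWII}. A secondary subtlety in part (1) is ensuring that the fiber sequence survives the $\Singaone$ construction on the nose; this rests on the fact, central to the AHW framework, that singular constructions of Zariski-locally trivial torsors with smooth affine structure group remain Nisnevich-local fiber sequences, combined with the descent properties supplied by (2) and (3).
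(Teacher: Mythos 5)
There is a genuine gap centered on your treatment of ${\mathbb A}^n \setminus 0$. You assert that $GL_n \to {\mathbb A}^n \setminus 0$ (project onto the last column) is a Zariski-locally trivial $GL_{n-1}$-torsor for the block-diagonal right action of $GL_{n-1}$, and correspondingly that ${\mathbb A}^n \setminus 0 \cong GL_n/GL_{n-1}$. This is false: the stabilizer of a vector in ${\mathbb A}^n \setminus 0$ under $GL_n$ is the mirabolic subgroup $GL_{n-1} \ltimes {\mathbb G}_a^{n-1}$, which properly contains the block-diagonal $GL_{n-1}$. A dimension count already rules out the isomorphism for $n \geq 2$: $\dim GL_n/GL_{n-1} = 2n-1$ while $\dim({\mathbb A}^n \setminus 0) = n$. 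So there is no scheme-level fiber sequence ${\mathbb A}^n \setminus 0 \to BGL_{n-1} \to BGL_n$ of the kind you write down, and the ``appeal to the homogeneous-space extensions using the identification ${\mathbb A}^n \setminus 0 \cong GL_n/GL_{n-1}$'' in your plan for parts (2) and (3) does not make sense as stated.

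What is actually true, and what the paper uses, is the following two-step argument. First, the $GL_{n-1}$-torsor $GL_n \to GL_n/GL_{n-1}$ yields a simplicial fiber sequence $GL_n/GL_{n-1} \to BGL_{n-1} \to BGL_n$, and applying $\Singaone$ preserves this fiber sequence (this uses \cite[Theorem 5.2.1]{AHW} and \cite[Proposition 2.1.1]{AHWII}). Second, the projection $GL_n/GL_{n-1} \to {\mathbb A}^n \setminus 0$ is a Zariski-locally trivial morphism with affine space fibers; by \cite[Lemma 4.2.4]{AHWII} this induces a weak equivalence $\Singaone (GL_n/GL_{n-1})(U) \to \Singaone({\mathbb A}^n\setminus 0)(U)$ for $U$ smooth affine. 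This affine-bundle comparison is exactly the step your argument is missing, and it is indispensable: it is how one passes from the homogeneous space $GL_n/GL_{n-1}$, to which the AHW/AHWII machinery applies directly, to the quasi-affine scheme ${\mathbb A}^n \setminus 0$ appearing in the statement. Once that bridge is in place, your deduction of (2) and (3) from the affine representability theorems of \cite{AHW,AHWII} is in the right spirit.
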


\begin{proof}
Observe that there is a simplicial fiber sequence
\[
GL_n/GL_{n-1} \longrightarrow BGL_{n-1} \longrightarrow BGL_n
\]
essentially by definition (see \cite[\S 2.3]{AHWII} and Lemma 2.3.1). Applying $\Singaone$ to each term here, in light of \cite[Theorem 5.2.1]{AHW}, we conclude that there is a simplicial fiber sequence of the form
\[
\Singaone GL_n/GL_{n-1} \longrightarrow \Singaone BGL_{n-1} \longrightarrow \Singaone BGL_n
\]
by appeal to \cite[Proposition 2.1.1]{AHWII}. The ``projection onto the first column" map $GL_n/GL_{n-1} \to {\mathbb A}^n \setminus 0$ is a Zariski locally trivial morphism with affine space fibers and thus by \cite[Lemma 4.2.4]{AHWII} the induced map $\Singaone GL_n/GL_{n-1} \to \Singaone ({\mathbb A}^n \setminus 0)$ is a weak equivalence after evaluation on affine schemes and the first result follows. The second and third statements are then contained in \cite[Theorem 5.1.3]{AHW} and \cite[Theorem 2.3.2]{AHWII}.
\end{proof}

%\begin{rem}
%Mention that assumptions on k can be weakened?
%\end{rem}

\begin{cor}
\label{cor:homotopyofsingularsets}
Suppose $k$ is a field, and $A$ is the local ring of a smooth $k$-scheme $X$ at a point. The following statements hold:
\begin{enumerate}[noitemsep,topsep=1pt]
\item $\pi_i(\Singaone ({\mathbb A}^n \setminus 0)(A)) = \begin{cases} 0 & \text{ if } 1 \leq i \leq n-2. \\ \K^{\MW}_n(A) & \text{ if } i = n-1.\end{cases}$
\item $\pi_i(\Singaone BGL_n(A)) = \K^Q_i(A)$ if $1 \leq i \leq n-1$.
\end{enumerate}
\end{cor}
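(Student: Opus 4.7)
The plan is to combine Proposition \ref{prop:fibersequences} with the $\aone$-homotopy sheaf computations for ${\mathbb A}^n \setminus 0$ (due to Morel) and for $BGL_n$ in the stable range (representability of $K$-theory), using these to read off the values of those sheaves at the local ring $A$.

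Since $A$ is the local ring of a smooth $k$-scheme at a point, I would first write $\Spec A = \lim_\alpha U_\alpha$ as a cofiltered limit of smooth affine Zariski open neighborhoods $U_\alpha = \Spec B_\alpha$ of that point. Because ${\mathbb A}^n \setminus 0$, $GL_n$, and the cosimplicial scheme $\Delta_k^\bullet$ are levelwise of finite type over $k$, the simplicial presheaves $\Singaone({\mathbb A}^n \setminus 0)$ and $\Singaone BGL_n$ convert this limit of affines into filtered colimits of simplicial sets:
\[
\Singaone \mathscr{X}(A) \;=\; \colim_\alpha \Singaone \mathscr{X}(U_\alpha)
\]
for $\mathscr{X} \in \{{\mathbb A}^n \setminus 0, BGL_n\}$. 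Since filtered colimits of simplicial sets preserve homotopy groups, the problem reduces to computing $\colim_\alpha \pi_i(\Singaone \mathscr{X}(U_\alpha))$.

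Next, I would apply Proposition \ref{prop:fibersequences}(2): for each smooth affine $U_\alpha$, the canonical map $\Singaone \mathscr{X}(U_\alpha) \to \mathrm{R}_{\Zar} \Singaone \mathscr{X}(U_\alpha)$ is a weak equivalence; and by (3), the target $\mathrm{R}_{\Zar}\Singaone \mathscr{X}$ is Nisnevich local and $\aone$-invariant, hence an $\aone$-fibrant model of $\mathscr{X}$. From this I would conclude that the presheaves $U \mapsto \pi_i(\mathrm{R}_{\Zar}\Singaone \mathscr{X}(U))$ agree, on smooth $k$-schemes, with the strictly $\aone$-invariant Nisnevich sheaves $\bpi_i^{\aone}(\mathscr{X})$. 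Taking the colimit of sections over the cofiltered system of Zariski neighborhoods of the point would then yield the stalk $\bpi_i^{\aone}(\mathscr{X})(A)$, producing an isomorphism $\pi_i(\Singaone \mathscr{X}(A)) \cong \bpi_i^{\aone}(\mathscr{X})(A)$.

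The two stated answers then follow from results already recalled in the paper: Morel's theorem gives $\bpi_i^{\aone}({\mathbb A}^n \setminus 0) = 0$ for $1 \leq i \leq n-2$ and $\bpi_{n-1}^{\aone}({\mathbb A}^n \setminus 0) \cong \K^{\MW}_n$, while representability of Quillen $K$-theory together with the stability bound yields $\bpi_i^{\aone}(BGL_n) \cong \K^Q_i$ for $1 \leq i \leq n-1$. The main delicate point will be ensuring that on each smooth affine $U_\alpha$ one genuinely has $\pi_i(\mathrm{R}_{\Zar}\Singaone \mathscr{X}(U_\alpha)) \cong \bpi_i^{\aone}(\mathscr{X})(U_\alpha)$ on the nose (not merely after Nisnevich sheafification), and that the stalk at $A$ is recovered by the colimit over Zariski neighborhoods; both rest on the Nisnevich-locality asserted in Proposition \ref{prop:fibersequences}(3) together with Morel's theorem that the homotopy presheaves of an $\aone$-fibrant simplicial presheaf are already strictly $\aone$-invariant Nisnevich sheaves.
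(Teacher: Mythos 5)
Your overall strategy---reduce the stalk computation to a filtered colimit over smooth affine Zariski neighborhoods of the point, then use Proposition \ref{prop:fibersequences} to replace $\Singaone$ by $\mathrm{R}_{\Zar}\Singaone$ and read off the $\aone$-homotopy sheaves---is essentially the paper's argument, and the filtered-colimit reduction is a correct and clean way to organize the passage from $U_\alpha$ to $A$. However, the key technical input from Morel is misstated, and the stronger intermediate claim you introduce is neither true in general nor needed.

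You assert that ``the presheaves $U \mapsto \pi_i(\mathrm{R}_{\Zar}\Singaone \mathscr{X}(U))$ agree, on smooth $k$-schemes, with the strictly $\aone$-invariant Nisnevich sheaves $\bpi_i^{\aone}(\mathscr{X})$,'' and attribute this to ``Morel's theorem that the homotopy presheaves of an $\aone$-fibrant simplicial presheaf are already strictly $\aone$-invariant Nisnevich sheaves.'' Neither assertion is correct as stated: for an $\aone$-local (Nisnevich-local and $\aone$-invariant) simplicial presheaf, the homotopy \emph{presheaves} $\pi_i$ need not be Nisnevich or even Zariski sheaves. Morel's theorem concerns the homotopy \emph{sheaves} $\bpi_i^{\aone}(\mathscr{X}) = a_{\Nis}\pi_i(\mathrm{R}_{\Zar}\Singaone\mathscr{X})$, which are strongly/strictly $\aone$-invariant, not the unsheafified $\pi_i$.

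The correct and strictly weaker input---the one the paper invokes---does the job. Morel shows, via unramifiedness of strongly $\aone$-invariant sheaves (Chapter~6 of his book), that for $i > 0$ the canonical map $a_{\Zar}\pi_i(\mathrm{R}_{\Zar}\Singaone\mathscr{X}) \to a_{\Nis}\pi_i(\mathrm{R}_{\Zar}\Singaone\mathscr{X}) = \bpi_i^{\aone}(\mathscr{X})$ is an isomorphism; that is, the Zariski sheafification of the homotopy presheaf is already a Nisnevich sheaf. Your colimit over Zariski neighborhoods computes precisely the Zariski stalk $(a_{\Zar}\pi_i(\mathrm{R}_{\Zar}\Singaone\mathscr{X}))(A)$, so this sheafification statement identifies it with $\bpi_i^{\aone}(\mathscr{X})(A)$, and the argument closes. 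Replace your sectionwise agreement claim with this stalk-level statement and the rest of the proof goes through; the final appeals to Morel's computation of $\bpi_i^{\aone}(\mathbb{A}^n\setminus 0)$ and to representability of Quillen $K$-theory in the stable range are fine.
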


\begin{proof}
We know $\mathrm{R}_{\Zar} \Singaone ({\mathbb A}^n \setminus 0)$ is Nisnevich local and $\aone$-invariant. In particular, $\bpi_i^{\aone}({\mathbb A}^n \setminus 0) = a_{\Nis}\pi_i(\mathrm{R}_{\Zar} \Singaone ({\mathbb A}^n \setminus 0))$. By \cite[Chapter 6]{MField}, we know that for any $i > 0$, the map $a_{\Zar}\pi_i(\mathrm{R}_{\Zar} \Singaone ({\mathbb A}^n \setminus 0)) \to a_{\Nis}\pi_i(\mathrm{R}_{\Zar} \Singaone ({\mathbb A}^n \setminus 0))$ is an isomorphism, i.e., the Zariski sheafification of the presheaf of homotopy groups is already a Nisnevich sheaf. Thus, for $A$ as in the statement, we see that
\[
\bpi_i^{\aone}({\mathbb A}^n \setminus 0)(A) = \pi_i(\mathrm{R}_{\Zar} \Singaone ({\mathbb A}^n \setminus 0))(A) = \pi_i(\mathrm{R}_{\Zar} \Singaone ({\mathbb A}^n \setminus 0)(A)),
\]
where the last equality follows essentially from the definition of $R_{\Zar}$, i.e., from the fact that $\mathrm{R}_{\Zar} \Singaone ({\mathbb A}^n \setminus 0)$ has Zariski stalks that are fibrant simplicial sets.

On the other hand, for any smooth affine $k$-scheme $U$, the map $\Singaone ({\mathbb A}^n \setminus 0)(U) \to \mathrm{R}_{\Zar} \Singaone ({\mathbb A}^n \setminus 0)(U)$ is a weak equivalence, it follows that the same holds for $U = \Spec A$. The result then follows from \cite[Corollary 6.39]{MField} as this computes the sheaf $\bpi_i^{\aone}({\mathbb A}^n \setminus 0)$ in the relevant cases.

The second statement is deduced in a similar fashion. Using the connectivity statement for $\Singaone ({\mathbb A}^{n+i} \setminus 0)(S)$ mentioned above, the stabilization map $\mathrm{R}_{\Zar} \Singaone BGL_n \to \mathrm{R}_{\Zar} \Singaone BGL$ is an $(n-1)$-equivalence upon evaluation at sections for $A$ as in the statement. The latter space represents algebraic K-theory by \cite[\S 4 Theorem 3.13]{MField} (though see \cite[Theorem 4 and Remark 2 p. 1162]{SchlichtingTripathi} for some mild corrections and to establish the statement in the generality we need).
\end{proof}

\begin{rem}
It follows immediately from Proposition \ref{prop:fibersequences}(ii) and the argument in the beginning of Corollary
\ref{cor:homotopyofsingularsets} that the Zariski sheafification of $U \mapsto \pi_i(\Singaone BGL_n(U))$ is already a
Nisnevich sheaf. For example, the Zariski sheaves $\K^Q_n$ and $\K^{\mathrm{M}}_n$ described in the previous section are
already Nisnevich sheaves. In addition, it follows that if $A$ is the local ring of a smooth $k$-scheme at a point, then
$\bpi_n^{\aone}(BGL_n)(\Spec A)$ coincides with $\pi_n(\Singaone BGL_n(A))$; we will use this identification freely in
the sequel. This provides the first link between $\mathbf{S}_{n+1}$ and the homotopy of $\Singaone BGL_n(A)$.
\end{rem}

\subsection{Homological stability and Milnor K-theory}
\label{ss:homologicalstabilization}
Suslin's conjecture is formulated in terms of homology of the classifying spaces of the discrete groups $GL_n(F)$ and results about homological stabilization for these groups. In the previous section, we saw that the homotopy of certain simplicial groups appeared naturally. Building on the homotopical results of the previous section, we proceed to analyze relative homology of the map $\Singaone BGL_{n} \to \Singaone BGL_{n+1}$. These results are natural simplicial counterparts of the results of Nesterenko--Suslin \cite{NesterenkoSuslin}, which we quickly review. In particular, we establish Lemma \ref{lem:homologystabilizationsingularconstruction}, which is a preliminary homological stabilization result, and Lemma \ref{lem:relativeHurewiczinvariantscomputation} which yields an analog of Suslin's morphism $H_n(BGL_n(F)) \to K^M_n(F)$ in the context of homology of $\Singaone BGL_n$.

\subsubsection*{Review of some results of Nesterenko--Suslin}
We now recall some results of Suslin as extended by Nesterenko--Suslin/Guin. The maps $GL_m \times GL_n \to GL_{m+n}$ given by block sum:
\[
(X_1,X_2) \longmapsto \begin{pmatrix}X_1 & 0 \\ 0 & X_2 \end{pmatrix}
\]
induce maps of classifying spaces $BGL_m \times BGL_n \cong B(GL_n \times GL_m) \to BGL_{m+n}$. For any commutative unital ring $A$, these maps induce external product maps $H_m(BGL_m(A)) \tensor H_n(BGL_n(A)) \to H_{n+m}(BGL_{n+m}(A))$ which are studied in \cite[\S ]{Suslin} and \cite[\S 3]{NesterenkoSuslin}. These external products map equip $\bigoplus_{n \geq 0} H_n(BGL_n(A))$ with the structure of a ring.

A result of Suslin \cite[Corollary 2.7.2]{Suslin}, generalized by Nesterenko--Suslin/Guin, shows that for any local ring $A$ with infinite residue field, the exterior product map
\[
H_1(BGL_1(A))^{\times n} \longrightarrow H_n(BGL_n(A))
\]
factors through a map
\[
\theta: \K^{\mathrm{M}}_n(A) \longrightarrow H_n(BGL_n(A))/H_n(BGL_{n-1}(A)).
\]
Suslin constructed an explicit splitting of this map \cite[Corollaries 2.4.1 and 2.7.4]{Suslin} and concluded in \cite[Theorem 3.4]{Suslin} that $\theta$ is an isomorphism. This result was generalized in \cite[Theorem 3.25]{NesterenkoSuslin}.

\begin{defn}
\label{defn:suslinsmorphism}
The map $s_n$, defined as the composite
\[
H_n(BGL_n(A)) \longrightarrow H_n(BGL_n(A))/H_n(BGL_{n-1}(A)) \stackrel{\theta^{-1}}{\longrightarrow} \K^{\mathrm{M}}_n(A),
\]
will be called {\em Suslin's morphism}.
\end{defn}

The maps $s_n$ just described can be put together to yield:
\[
\bigoplus_{n \geq 0} s_n: \bigoplus_{n \geq 0} H_n(BGL_n(A)) \longrightarrow \bigoplus_{n \geq 0} \K^{\mathrm{M}}_n(A).
\]
By appeal to \cite[Lemma 2.6.1]{Suslin} and \cite[Lemma 3.23]{NesterenkoSuslin}, the direct sum on the left hand side
has naturally the structure of a graded ring. In fact, this ring is graded commutative (essentially because the direct
sum operation on vector spaces is symmetric monoidal). It is well known that the Milnor K-theory ring is graded commutative as well. By \cite[Corollary 3.28]{NesterenkoSuslin}, $\bigoplus_{n \geq 0} s_n$ is in fact a homomorphism of graded rings.

\subsubsection*{Weak homological stability}
We now analyze an analog of Suslin's homomorphism after applying the singular construction.

\begin{lem}
\label{lem:homologystabilizationsingularconstruction}
Suppose $n \geq 1$ is an integer, $k$ is a field and $A$ is an essentially smooth local $k$-algebra. The morphism:
\[
H_i(\Singaone BGL_{n}(A)) \longrightarrow H_i(\Singaone BGL_{n+1}(A))
\]
is an isomorphism for $i \leq n-1$ and split surjective for $i = n$.
\end{lem}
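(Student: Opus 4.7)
The strategy is to deduce both statements from the $\aone$-fibre sequence
\[
\Singaone({\mathbb A}^{n+1} \setminus 0) \longrightarrow \Singaone BGL_n \longrightarrow \Singaone BGL_{n+1}
\]
of Proposition~\ref{prop:fibersequences}(i). Since $\Spec A$ is a cofiltered limit of smooth affine $k$-schemes and each term above is a simplicial presheaf built out of representable objects, Proposition~\ref{prop:fibersequences}(ii)--(iii) combined with the compatibility of $\Singaone$ with filtered colimits in the argument yield a genuine fibre sequence of simplicial sets after evaluating on $A$. By Corollary~\ref{cor:homotopyofsingularsets}(i), the fibre $\Singaone({\mathbb A}^{n+1} \setminus 0)(A)$ is $(n-1)$-connected with $\pi_n = \K^{\MW}_{n+1}(A)$.

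\textbf{Serre spectral sequence.} I would then run the Serre spectral sequence
\[
E^2_{p,q} = H_p\bigl(\Singaone BGL_{n+1}(A);\,H_q(\Singaone({\mathbb A}^{n+1} \setminus 0)(A))\bigr) \Longrightarrow H_{p+q}(\Singaone BGL_n(A)).
\]
Since $H_q$ of the fibre vanishes for $1 \leq q \leq n-1$, the only non-zero column in total degree $\leq n-1$ is $q=0$, and a straightforward inspection of the incoming and outgoing differentials shows the edge map $H_i(\Singaone BGL_n(A)) \to H_i(\Singaone BGL_{n+1}(A))$ is an isomorphism in this range and is surjective in total degree $n$ with kernel a subquotient of $\K^{\MW}_{n+1}(A)$. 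One has to check that $\pi_1$ of the base acts trivially on $H_*$ of the fibre in the relevant range; this should follow because the monodromy is induced by conjugation of $GL_{n+1}$ on $GL_{n+1}/GL_n \simeq_{\aone} {\mathbb A}^{n+1} \setminus 0$, and central scalar matrices, which generate $\pi_1\Singaone BGL_{n+1}(A) = K_1^Q(A)$, act through scalar multiplication on ${\mathbb A}^{n+1} \setminus 0$, which is trivial on $\aone$-homology.

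\textbf{Producing the splitting in degree $n$.} For the splitting, iterate the preceding argument with $n$ replaced successively by $n+1, n+2, \ldots$: in each case the fibre becomes more highly connected and the stabilization map $H_n(\Singaone BGL_{n+1}(A)) \to H_n(\Singaone BGL_m(A))$ becomes an isomorphism for every $m \geq n+1$. The target is therefore canonically identified with $H_n(\Singaone BGL(A))$, the degree-$n$ homology of the K-theory classifying space, which is an $H$-space under block sum. Since $\Singaone$ preserves finite products, the block-sum maps $BGL_p \times BGL_q \to BGL_{p+q}$ survive $\Singaone$ and, at the level of homology in the stable limit, equip $\bigoplus_m H_m(\Singaone BGL_m(A))$ with a graded-ring structure exactly as in \cite[\S 3]{NesterenkoSuslin}; the splitting is then produced by writing any stable class as a product of classes in the unstable range, in direct simplicial analogy with the Nesterenko--Suslin construction of $\theta$.

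\textbf{Main obstacle.} The spectral-sequence bookkeeping is routine once Proposition~\ref{prop:fibersequences} is granted; the delicate point is producing the splitting in degree $n$, because the stabilization map $\Singaone BGL_n \to \Singaone BGL_{n+1}$ is not itself a map of $H$-spaces. I would expect the cleanest approach to be a faithful simplicial transposition of the Nesterenko--Suslin/Guin construction over the simplicial rings $A[\Delta^\bullet]$, and verifying that the construction is compatible with the simplicial direction introduced by $\Singaone$ is what I would expect to require the most care.
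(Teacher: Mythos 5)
Your argument for the isomorphism in degrees $i \leq n-1$ and the surjectivity in degree $n$ is correct, and the Serre spectral sequence is a perfectly valid substitute for the relative Hurewicz theorem that the paper uses; in the range $i \leq n-1$ only the $q = 0$ row contributes, so the $\pi_1$-action never enters, and the edge map in degree $n$ is automatically surjective. Your worry about monodromy is therefore unnecessary for this part.

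The gap, which you correctly sense, is in the production of the splitting. Your proposal is to transpose the Nesterenko--Suslin construction of $\theta$ to the simplicial ring $A[\Delta^\bullet]$, but the phrase ``writing any stable class as a product of classes in the unstable range'' does not give a well-defined map: the Nesterenko--Suslin product map $K^M_n(A) \to H_n(BGL_n(A))/H_n(BGL_{n-1}(A))$ produces the \emph{non-stable part} of $H_n(BGL_n(A))$, not a section of the stabilization map $H_n \to H_n$, and it is far from obvious that a section of the latter can be extracted from the ring structure alone. The paper avoids this entirely by a different and much shorter argument. It observes that the classical stabilization $H_n(BGL_n(A)) \to H_n(BGL(A))$ is already an isomorphism by Suslin/Nesterenko--Suslin/Guin, and that $BGL(A) \to \Singaone BGL(A)$ is a homology isomorphism because $\Singaone BGL(A)$ computes Karoubi--Villamayor K-theory, which for $A$ regular agrees with Quillen K-theory, so by the ``$+{}={}Q$'' theorem $\Singaone BGL(A)$ is a model for $BGL(A)^+$. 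Combined with the surjectivity from the first part (which gives that $H_n(\Singaone BGL_{n+1}(A)) \to H_n(\Singaone BGL(A))$ is an isomorphism), the natural square shows that the composite $H_n(BGL_n(A)) \to H_n(\Singaone BGL_n(A)) \to H_n(\Singaone BGL_{n+1}(A))$ is an isomorphism; hence $H_n(BGL_n(A)) \to H_n(\Singaone BGL_n(A))$ is injective and furnishes the splitting by inverting that composite. The key leverage you are missing is the comparison with the honest discrete groups $BGL_n(A)$; by not using it, you place the entire burden on a simplicial version of the Nesterenko--Suslin construction, which is exactly what the paper's argument is designed to avoid.
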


\begin{proof}
For $n$ as in the statement, the fact that the map in question is an isomorphism for $i \leq n-1$ and surjective for $i = n$ follows by combining the relative Hurewicz theorem and the connectivity estimate for $\Singaone ({\mathbb A}^{n+1} \setminus 0) (A)$ in Corollary~\ref{cor:homotopyofsingularsets}.

To construct the splitting, we proceed as follows. Consider the following diagram
\[
\xymatrix{
H_{n}(BGL_{n}(A)) \ar[r]\ar[d] & H_{n}(BGL_{n+1}(A)) \ar[d]\ar[r] & \cdots \ar[r] & H_{n}(BGL(A)) \ar[d] \\
H_{n}(\Singaone BGL_n(A)) \ar[r] & H_{n}(\Singaone BGL_{n+1}(A))\ar[r] & \cdots \ar[r] & H_{n}(\Singaone BGL(A)),
}
\]
where the horizontal maps are the stabilization maps and the vertical maps are induced by the map from a simplicial presheaf to its singular construction.

The maps in the first row may be determined by appealing to \cite{Suslin} and \cite{NesterenkoSuslin} or \cite{Guin}. In particular, they prove (see \cite[Theorem 3.4(c)]{Suslin} and \cite[Theorem 3.25]{NesterenkoSuslin} \cite[Th\'eor\`eme 1]{Guin}) that for any local ring $A$ with infinite residue field, the map $H_{n}(BGL_{n}(A)) \to H_{n}(BGL_{n+j}(A))$ is an isomorphism for any integer $j \geq 1$. Thus, all the maps in the top row are isomorphisms.

We claim that the map $BGL(A) \to \Singaone BGL(A)$ is actually a homology isomorphism. To see this, recall that
$\Singaone BGL(A)$ represents Karoubi--Villamayor K-theory of $A$. The homotopy groups of the space $\Singaone BGL(A)$
are precisely the Karoubi--Villamayor K-theory groups \cite[Definition 11.4]{KBook}. Since Karoubi--Villamayor K-theory
coincides with Quillen K-theory for regular rings \cite[Corollary 12.3.2]{KBook}, by appeal to the $\text{plus} =
Q$-theorem \cite[Corollary 7.2]{KBook}, we conclude that $\Singaone BGL(A)$ can be taken as a model for the plus construction of $BGL(A)$ and the claim follows.

Combining these observations, we conclude that the composite map
\[
H_{n}(BGL_{n}(A)) \longrightarrow H_{n}(BGL(A)) \longrightarrow H_{n}(\Singaone BGL(A))
\]
is an isomorphism. On the other hand, using the homology stabilization results for the singular construction we mentioned at the beginning of this proof, we conclude that $H_{n}(\Singaone BGL_{n+1}(A)) \to H_{n}(\Singaone BGL(A))$ is an isomorphism. Therefore, the composite map
\[
H_{n}(BGL_{n}(A)) \longrightarrow H_{n}(BGL_{n+1}(A)) \longrightarrow H_{n}(\Singaone BGL_{n+1}(A))
\]
is also an isomorphism. By commutativity of the left-most square, we conclude that the inclusion map $H_{n}(BGL_{n}) \to H_{n}(\Singaone BGL_{n})$ is injective and provides a splitting of the stabilization map as claimed.
\end{proof}

\begin{rem}
The fact that the homotopy groups of the space $\Singaone BGL(A)$ give Karoubi--Villamayor K-theory that we have taken
as a definition of the latter actually goes back to Rector \cite{Rector}. That the latter definition of
Karoubi--Villamayor K-theory coincides with Quillen K-theory is originally due to Gersten \cite[Theorem
3.13]{GerstenQKV}. The fact that the singular construction of $BGL$ represents algebraic $K$-theory in the
$\aone$-homotopy category originates with Morel \cite[Chapter 3]{MorelTHS}.
\end{rem}

\subsubsection*{Milnor K-theory and the relative Hurewicz theorem}
There is another way to link Milnor K-theory and the stabilization map $\Singaone BGL_{n-1} \to \Singaone BGL_n$ via the relative Hurewicz theorem, which we now describe. Again, assume $k$ is a field, and begin by observing that, if $A$ is an essentially smooth local $k$-algebra, then $\pi_1(\Singaone BGL_{n}(A)) = \gm{}(A)$ for any integer $n \geq 1$; this follows, e.g., from the second point of Corollary \ref{cor:homotopyofsingularsets}. There is an action of $\pi_1(\Singaone BGL_{n-1}(A))$ on the homotopy groups of the homotopy fiber of the map $\Singaone BGL_{n-1}(A) \to \Singaone BGL_n(A)$, i.e., on $\pi_i(\Singaone ({\mathbb A}^n \setminus 0)(A))$. The relative Hurewicz theorem describes the relative homology of the above map in the first non-vanishing degree as a quotient of the relative homotopy group by this action.

\begin{lem}
\label{lem:relativeHurewiczinvariantscomputation}
For any field $k$ and any essentially smooth local $k$-algebra $A$, there is a short exact sequence of the form
\[
H_{n}(\Singaone BGL_{n-1}(A)) \longrightarrow H_{n}(\Singaone BGL_{n}(A)) \stackrel{\delta_n}{\longrightarrow} \K^{\mathrm{M}}_n(A).
\]
\end{lem}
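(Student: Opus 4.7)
The plan is to apply the relative Hurewicz theorem to the homotopy fiber sequence of Proposition \ref{prop:fibersequences} evaluated at $A$, identify the resulting relative homology with a quotient of $\K^{\MW}_n(A)$ by an $A^{\times}$-action, and use the Milnor--Witt relation $\langle u\rangle-1=\eta[u]$ to recognize this quotient as $\K^M_n(A)$. The key obstacle will be the computation of the $A^{\times}$-action on $\K^{\MW}_n(A)$.

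Setting $Y := \Singaone BGL_{n-1}(A)$, $X := \Singaone BGL_n(A)$, and $F := \Singaone(\A^n \setminus 0)(A)$, Proposition \ref{prop:fibersequences} provides a homotopy fibration $F \to Y \to X$ of simplicial sets, and Corollary \ref{cor:homotopyofsingularsets} then shows that $F$ is $(n-2)$-connected with $\pi_{n-1}(F) \cong \K^{\MW}_n(A)$ and that $\pi_1(Y) \cong A^{\times}$. From the long exact sequence of the fibration I would read off that $\pi_i(X,Y) = 0$ for $i < n$ and that $\pi_n(X,Y) \cong \pi_{n-1}(F) \cong \K^{\MW}_n(A)$ as $\pi_1(Y)$-modules. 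The relative Hurewicz theorem, in its non-simply-connected form, then yields
\[
H_n(X,Y) \;\isomto\; \K^{\MW}_n(A)_{A^{\times}},
\]
where the right-hand side denotes coinvariants for the $\pi_1(Y)\cong A^{\times}$-action.

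The hard part will be to identify this action: a unit $u \in A^{\times}$ acts on $\pi_n(X,Y)$ via a lift $\operatorname{diag}(u,1,\dots,1) \in GL_{n-1}(A)$, and under the identification with $\pi_{n-1}(F)$ this corresponds to the induced $GL_{n-1}$-action on $F \simeq \A^n \setminus 0$. Under Morel's identification $\bpi^{\aone}_{n-1}(\A^n \setminus 0) \cong \K^{\MW}_n$ a matrix acts by $\langle \det\rangle$, so here by $\langle u\rangle$ (compare \cite[Lemma 3.5]{AsokFaselSpheres} and the surrounding discussion). Given this identification, the relation $\langle u\rangle - 1 = \eta[u]$ forces $\K^{\MW}_n(A)_{A^{\times}} \cong \K^{\MW}_n(A)/\eta\K^{\MW}_{n+1}(A) \cong \K^M_n(A)$. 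The claim then follows from the long exact sequence in singular homology of the pair,
\[
H_n(Y) \longrightarrow H_n(X) \longrightarrow H_n(X,Y) \longrightarrow H_{n-1}(Y),
\]
by defining $\delta_n$ to be the composite $H_n(X) \to H_n(X,Y) \cong \K^M_n(A)$. Surjectivity of $\delta_n$ and compatibility with the classical picture should drop out by comparison with the diagram appearing in the proof of Lemma \ref{lem:homologystabilizationsingularconstruction} together with the Nesterenko--Suslin result that already realizes $\K^M_n(A)$ as the stable cokernel in the discrete setting.
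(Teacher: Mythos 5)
Your proposal is correct and follows essentially the same approach as the paper: apply the (non-simply-connected) relative Hurewicz theorem to the fibration from Proposition \ref{prop:fibersequences}, identify the $\pi_1(\Singaone BGL_{n-1}(A))\cong A^\times$-action on $\pi_{n-1}(\Singaone(\A^n\setminus 0)(A))\cong\K^{\MW}_n(A)$ as the standard $\gm$-action by $\langle u\rangle$, and observe that the coinvariants $\K^{\MW}_n(A)/\langle(\langle u\rangle-1)x\rangle = \K^{\MW}_n(A)/\eta\K^{\MW}_{n+1}(A)\cong\K^M_n(A)$. Note only that surjectivity of $\delta_n$, which you speculate about at the end, is not part of this lemma and is established separately in Theorem \ref{thm:maintechnical} using Schlichting's plus-construction results.
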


\begin{proof}
It follows from \cite[p. 2590]{AsokFaselThreefolds} and the proof of \cite[Proposition 3.5.1]{AsokFaselpi3a3minus0} that the action of $\gm{}(A)$ on $\pi_{n-1}(\Singaone ({\mathbb A}^n \setminus 0)(A))$ is the standard action of $\gm{}(A)$ on $\K^{\MW}_n(A)$. The quotient of $\K^{\MW}_n(A)$ by the standard action is the quotient by the subgroup of $\eta$-divisible elements and therefore is $\K^{\mathrm{M}}_n(A)$.
\end{proof}

\begin{rem}
Recall that one may extend the definition of Milnor K-theory to local rings (e.g., \cite[\S 3]{NesterenkoSuslin}). The affirmation of the Gersten conjecture for Milnor K-theory for regular local rings containing a field \cite[Theorem 7.1]{Kerz} allows us to conclude that the evident map $K^M_n(A) \to \K^{\mathrm{M}}_n(A)$ is actually an isomorphism, when $A$ is a regular local ring. We use this identification without further mention in what follows.
\end{rem}

\subsection{The Suslin--Hurewicz morphism and the stabilization theorem}
\label{ss:comparison}
Suppose now $k$ is a field, and $A$ is an essentially smooth local $k$-algebra with infinite residue field. We now compare Suslin's morphism $s_n$ to the boundary map $\delta_n$ in the relative Hurewicz theorem of Lemma \ref{lem:relativeHurewiczinvariantscomputation}. In light of Suslin's stabilization theorem we can identify the target of $s_n$ with the relative homology group $H_n(BGL_n(A),BGL_{n-1}(A))$. Thus, functoriality of the singular construction and the relative homology exact sequence in conjunction with Lemma \ref{lem:relativeHurewiczinvariantscomputation} and \cite[Theorem 3.25]{NesterenkoSuslin} yield the following commutative diagram of exact sequences:
\[
\resizebox{6in}{!}{\xymatrix{
H_{n}(BGL_{n-1}(A)) \ar[r]\ar[d] & H_n(BGL_{n}(A)) \ar[r]^-{s_n}\ar[d] & \K^{\mathrm{M}}_n(A) \ar[r]\ar[d] & H_{n-1}(BGL_{n-1}(A)) \ar[d]\ar[r]&\\
H_n(\Singaone BGL_{n-1}(A)) \ar[r] & H_n(\Singaone BGL_n(A)) \ar[r]^-{\delta_n} & \K^{\mathrm{M}}_n(A) \ar[r] & H_{n-1}(\Singaone BGL_{n-1}(A)) \ar[r]& \cdots
}}
\]
and our goal is to study the maps $\K^{\mathrm{M}}_n(A) \to \K^{\mathrm{M}}_n(A)$. The above diagram can be very explicitly analyzed for small values of $n$.

\begin{lem}
\label{lem:casen=1}
The map $BGL_1 \to \Singaone BGL_1$ is the identity map of simplicial presheaves.
\end{lem}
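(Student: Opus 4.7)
The plan is to unwind the definitions of both sides level by level and invoke $\aone$-invariance of $\Gm$ on reduced (in particular smooth) $k$-schemes. As a simplicial presheaf, $BGL_1 = B\Gm$ evaluated on a smooth $k$-scheme $U$ is the nerve of the abelian group $\Gm(U) = \mathcal{O}(U)^\times$, so
\[
BGL_1(U)_n \;=\; \Gm(U)^n.
\]
On the other hand, by the very definition of the singular construction as the diagonal of the bisimplicial presheaf $\underline{\hom}(\Delta^\bullet_k, BGL_1)$,
\[
(\Singaone BGL_1)(U)_n \;=\; BGL_1(U \times \Delta^n_k)_n \;=\; \Gm(U \times \Delta^n_k)^n,
\]
and the canonical map $BGL_1 \to \Singaone BGL_1$ is, in simplicial degree $n$, induced componentwise by pullback along the projection $U \times \Delta^n_k \to U$.

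The second step would be to observe that for any smooth (hence reduced) $k$-scheme $U$, the pullback map
\[
\Gm(U) \;\longrightarrow\; \Gm(U \times_k \Delta^n_k)
\]
is a bijection. This reduces (Zariski locally on $U$) to the statement that for a reduced ring $R$, the units of $R[x_1, \dots, x_n]$ are precisely the units of $R$, which is standard. Equivalently, $\Gm$ is strictly $\aone$-invariant as a presheaf on $\Sm_k$. Thus in each bidegree the map $BGL_1(U)_n \to (\Singaone BGL_1)(U)_n$ is a bijection, and under the canonical identification $\Gm(U \times \Delta^n_k)^n = \Gm(U)^n$ it is literally the identity.

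The final step would be to check that the resulting identifications are compatible with all the simplicial structure maps, both in the $\Delta^\bullet_k$ direction (which collapses since everything is constant in that direction after the identification) and in the internal simplicial direction of $BGL_1$. Both are automatic from naturality of pullback along $\Delta^n_k \to \Spec k$. So the map is the identity of simplicial presheaves after the canonical identification, as claimed.

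There is no real obstacle here: the content of the lemma is just the $\aone$-invariance of $\Gm$ combined with the fact that $BGL_1$, as a simplicial presheaf, is built entirely from copies of $\Gm$. The proof should be no more than a few lines once the definitions are laid out.
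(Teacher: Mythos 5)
Your proof is correct and follows essentially the same route as the paper's: unwind the definition of $\Singaone$ to see that $(\Singaone BGL_1)_n = \underline{\hom}(\Delta^n_k, \Gm^{\times n})$ and observe that this internal hom is just $\Gm^{\times n}$ again. The paper simply asserts $\underline{\hom}(\Delta^n,\gm{^{\times i}}) = \gm{^{\times i}}$ without comment, whereas you usefully make explicit the reason this holds — namely that smooth $k$-schemes are reduced, so $\mathcal O(U\times\A^n)^\times = \mathcal O(U)^\times$ — which is the one genuine input.
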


\begin{proof}
By definition, the usual bar model of $BGL_1$ is a simplicial object with $i$-simplices given by $\gm{\times i}$. Since $\underline{\hom}(\Delta^n,\gm{^{\times i}}) = \gm{^{\times i}}$, it follows directly from the definition of $\Singaone$ that the map $BGL_1 \to \Singaone BGL_1$ is the identity map.
\end{proof}

\begin{lem}
\label{lem:casen=2}
If $k$ is a field, and if $A$ is an essentially smooth local $k$-algebra with infinite residue field, then the map $BGL_2(A) \to \Singaone BGL_2(A)$ induces an isomorphism on homology in degrees $\leq 2$.
\end{lem}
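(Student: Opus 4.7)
My plan is to reduce the degree $0$ and $1$ cases to elementary chases with the stabilization ladder, and to treat the degree $2$ case by comparing the relative Hurewicz exact sequence of Lemma \ref{lem:relativeHurewiczinvariantscomputation} with its classical analogue due to Nesterenko--Suslin.

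Degree $0$ is trivial since both spaces are connected. For degrees $1$ and $2$, consider the commutative ladder
\[
\begin{array}{ccccccc}
H_i(BGL_2(A)) & \to & H_i(BGL_3(A)) & \to & \cdots & \to & H_i(BGL(A)) \\
\downarrow & & \downarrow & & & & \downarrow \\
H_i(\Singaone BGL_2(A)) & \to & H_i(\Singaone BGL_3(A)) & \to & \cdots & \to & H_i(\Singaone BGL(A)).
\end{array}
\]
Suslin's stabilization theorem makes the top row a chain of isomorphisms for $i \leq 2$; since $\Singaone BGL(A)$ is a model for $BGL(A)^+$ (as used in the proof of Lemma \ref{lem:homologystabilizationsingularconstruction}), the rightmost vertical is always an isomorphism; and Lemma \ref{lem:homologystabilizationsingularconstruction} makes the horizontal $H_i(\Singaone BGL_n(A)) \to H_i(\Singaone BGL_{n+1}(A))$ an isomorphism whenever $i \leq n-1$. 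For $i = 1$, the bottom row thus consists entirely of isomorphisms, and the leftmost vertical is seen to be an isomorphism by a chase in a single square. For $i = 2$, the same chase yields injectivity of the leftmost vertical $H_2(BGL_2(A)) \to H_2(\Singaone BGL_2(A))$, because the horizontal composite $H_2(BGL_2(A)) \to H_2(\Singaone BGL(A))$ is forced to be an isomorphism.

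For surjectivity in degree $2$, I would place the maps $s_2$ and $\delta_2$ into a commutative diagram of four-term exact sequences coming from Nesterenko--Suslin and from Lemma \ref{lem:relativeHurewiczinvariantscomputation}, respectively:
\[
\begin{array}{ccccccc}
H_2(BGL_1(A)) & \to & H_2(BGL_2(A)) & \xrightarrow{s_2} & \K^M_2(A) & \to & 0 \\
\| & & \downarrow & & \downarrow \varphi & & \\
H_2(\Singaone BGL_1(A)) & \to & H_2(\Singaone BGL_2(A)) & \xrightarrow{\delta_2} & \K^M_2(A) & \to & 0.
\end{array}
\]
The leftmost vertical is the identity by Lemma \ref{lem:casen=1}, and the middle vertical is the map under study. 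Granted $\varphi = \mathrm{id}$, the five-lemma upgrades the previously established injectivity to an isomorphism.

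The main obstacle is thus the identification $\varphi = \mathrm{id}$, i.e.\ the agreement of Suslin's morphism $s_2$ with the Hurewicz boundary $\delta_2$ on $\K^M_2(A)$. I would attempt to verify this at the level of Steinberg symbols: Suslin's construction sends the class of a diagonal matrix $\mathrm{diag}(u, v) \in H_2(BGL_2(A))$ to the symbol $\{u, v\} \in \K^M_2(A)$, while unwinding $\delta_2$ through the fiber sequence $\Singaone({\mathbb A}^2 \setminus 0)(A) \to \Singaone BGL_1(A) \to \Singaone BGL_2(A)$ together with Morel's identification $\bpi_1^{\aone}({\mathbb A}^2 \setminus 0) \cong \K^{\MW}_2$ produces the Milnor--Witt symbol $\{u, v\}_{MW}$, whose reduction modulo the $\eta$-divisible subgroup is precisely $\{u, v\}$. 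Making this symbol-level comparison rigorous (and checking that the generators $\{u,v\}$ suffice to pin $\varphi$ down, which is automatic since $s_2$ is surjective) is the principal technical task.
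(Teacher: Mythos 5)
Your preliminary reductions (degrees $0$, $1$, and injectivity in degree $2$ via the ladder to $H_*(\Singaone BGL(A)) \cong H_*(BGL(A)^+)$) are correct and are exactly the content the paper extracts from Lemma~\ref{lem:homologystabilizationsingularconstruction}. The divergence is in how surjectivity in degree $2$ is handled. You propose to prove the sharp identification $\varphi = \mathrm{id}$ by tracing Steinberg symbols through the relative Hurewicz boundary; the paper instead only shows $\varphi$ is \emph{surjective}, which is all the five lemma requires. It does this cheaply: the split injectivity of $H_2(BGL_2(A)) \to H_2(\Singaone BGL_2(A))$ established in Lemma~\ref{lem:homologystabilizationsingularconstruction}, together with surjectivity of $s_2$, forces (by a diagram chase) surjectivity of the right-hand vertical without any need to identify it with the identity.

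The symbol-level computation you defer is precisely where your argument is not yet a proof. To show $\varphi(\{u,v\}) = \{u,v\}$ you must verify that $\delta_2$ sends the external-product homology class $u \ast v \in H_2(\Singaone BGL_2(A))$ to the Steinberg symbol $\{u,v\}$ under the identification of Lemma~\ref{lem:relativeHurewiczinvariantscomputation}; this amounts to comparing the Pontryagin/external product in homology with the Whitehead/Samelson product landing in $\pi_1(\Singaone(\mathbb{A}^2 \setminus 0)(A)) \cong \K^{\MW}_2(A)$, and it is not a formality — in effect you would be proving the $n=2$ case of Theorem~\ref{thm:suslinmorphismscoincide} by hand, a result the paper deliberately routes through the plus-construction comparison (Proposition~\ref{prop:laoneplussplitting}, Schlichting's results, Theorem~\ref{thm:maintechnical}) rather than by unwinding symbols. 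So the proposal is a coherent alternative route, but it replaces a short diagram chase with the hardest part of the whole comparison theorem, and that hard part is currently only asserted. If you keep this route, you should also notice that surjectivity of $\varphi$ suffices for the (four-)lemma conclusion, so demanding $\varphi = \mathrm{id}$ makes the task strictly harder than necessary.
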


\begin{proof}
By appeal to Lemma \ref{lem:casen=1} we conclude that the map $H_1(\Singaone BGL_1(A)) \to H_1(\Singaone BGL_2(A))$ is an isomorphism. It follows from Lemma \ref{lem:relativeHurewiczinvariantscomputation} that the map $\delta_2$ is surjective and that there is a commutative diagram of exact sequences of the form
\[
\xymatrix{
H_2(BGL_1(A)) \ar[r]\ar[d]^-{\sim} & H_2(BGL_2(A)) \ar[r]^-{s_2}\ar[d] & \K^{\mathrm{M}}_2(A) \ar[r] \ar[d] & 0\\
H_2(\Singaone BGL_1(A)) \ar[r] & H_2(\Singaone BGL_2(A)) \ar[r]^-{\delta_2} & \K^{\mathrm{M}}_2(A) \ar[r]& 0
}
\]
Now the map $H_2(BGL_2(A)) \to H_2(\Singaone BGL_2(A))$ is split injective by Lemma \ref{lem:homologystabilizationsingularconstruction}. Therefore, the composite of this splitting and the map $H_2(BGL_2(A)) \to \K^{\mathrm{M}}_2(A)$ is surjective. A diagram chase thus implies that the vertical map $\K^{\mathrm{M}}_2(A) \to \K^{\mathrm{M}}_2(A)$ is also surjective. Then, the five lemma implies that $H_2(BGL_2(A)) \to H_2(\Singaone BGL_2(A))$ must be surjective as well and so it is an isomorphism.
\end{proof}

\subsubsection*{Comparison of stabilization maps}
To analyze the comparison maps in general, we will make use of an auxiliary space, which we now describe. Following Schlichting \cite[\S 6]{Schlichting}, for a $k$-scheme $U$, we define $\tilde{E}_n(U)$ to be the maximal perfect subgroup of the kernel of the map $GL_n(\Gamma(U,{\mathscr O}_U)) \to \pi_0(\Singaone GL_n(\Gamma(U,{\mathscr O}_U)))$. By construction, $\tilde{E}_n(U)$ is a subgroup of $SL_n(U)$ since it maps to zero in the commutative group $GL_n(U)/SL_n(U) = {\mathscr O}_U(U)^{\times}$. If either $n \geq 3$ or $n=2$ and the residue fields of smooth $k$-schemes have $> 3$ elements, then the inclusion of presheaves $\tilde{E}_n \to SL_n$ becomes an isomorphism after Zariski sheafification \cite[Lemma 6.5]{Schlichting} and $\tilde{E}_n(U)$ is a presheaf of perfect groups. We write $BGL_n^+$ for the simplicial presheaf obtained by appeal to the functorial version of the plus construction \cite[Chapter VII \S 6]{BousfieldKan} applied to $BGL_n$ and the presheaf of perfect groups $\tilde{E}_n$. The next result is essentially contained in \cite[Corollary 6.16]{Schlichting} and the discussion immediately preceding that statement.

\begin{prop}
\label{prop:laoneplussplitting}
Suppose $k$ is an infinite field. If $n \geq 1$ is an integer, then the following statements hold.
\begin{enumerate}[noitemsep,topsep=1pt]
\item The map $BGL_n \to \Singaone BGL_n$ factors through $BGL_n^+$ and these factorizations are functorial in $n$.
\item The map $\Laone BGL_n \to \Laone BGL_n^+$ is split in the simplicial homotopy category.
\end{enumerate}
\end{prop}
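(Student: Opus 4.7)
The plan is to deduce part (i) from the universal property of the Bousfield--Kan plus construction applied sectionwise to presheaves of groups, and then to deduce part (ii) by combining part (i) with the fact that the canonical map $BGL_n \to \Singaone BGL_n$ is an $\aone$-weak equivalence.

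For part (i), the key observation is that for any smooth affine $U$ we have $\pi_1(\Singaone BGL_n(U)) = \pi_0(\Singaone GL_n(U))$, and the map on $\pi_1$ induced by $BGL_n(U) \to \Singaone BGL_n(U)$ is precisely the canonical map $GL_n(U) \to \pi_0(\Singaone GL_n(U))$. By the very definition of $\tilde{E}_n(U)$ given in the preamble, this subgroup lies in the kernel of the canonical map, so by the universal property of the functorial plus construction the map $BGL_n \to \Singaone BGL_n$ factors, presheaf-sectionwise, through $BGL_n^+$. For functoriality in $n$, one checks that stabilization $GL_n \hookrightarrow GL_{n+1}$ sends $\tilde{E}_n$ into $\tilde{E}_{n+1}$: the image is perfect, and it lies in the kernel of $GL_{n+1}(U) \to \pi_0(\Singaone GL_{n+1}(U))$ by naturality of $\Singaone$ with respect to the stabilization map, so the maximality characterization of $\tilde{E}_{n+1}$ forces the containment.

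For part (ii), apply $\Laone$ to the factorization $BGL_n \to BGL_n^+ \to \Singaone BGL_n$ provided by (i), obtaining
\[
\Laone BGL_n \longrightarrow \Laone BGL_n^+ \longrightarrow \Laone \Singaone BGL_n.
\]
Since the map $BGL_n \to \Singaone BGL_n$ is an $\aone$-weak equivalence, a standard property of the singular construction (\cite[p.~87]{MV}), the composite above is a simplicial weak equivalence between $\aone$-local objects, hence admits a simplicial homotopy inverse $r \colon \Laone \Singaone BGL_n \to \Laone BGL_n$. The composition
\[
\Laone BGL_n^+ \longrightarrow \Laone \Singaone BGL_n \xrightarrow{\ r\ } \Laone BGL_n
\]
is then the desired splitting: composing it with $\Laone BGL_n \to \Laone BGL_n^+$ recovers the simplicial homotopy inverse precomposed with the original equivalence, which is homotopic to the identity.

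The principal subtlety lies in (i): one must apply the plus construction presheaf-sectionwise and verify that the sheaf-level factorizations are compatible with stabilization. This compatibility hinges on the maximality definition of $\tilde{E}_n$ and on normality of $\tilde{E}_n$ in $GL_n$, which is available because of its identification with $SL_n$ after Zariski sheafification (for $n \geq 2$, using that $k$ is infinite) recorded in \cite[Lemma 6.5]{Schlichting}; the case $n = 1$ is trivial since $\tilde{E}_1$ is necessarily trivial.
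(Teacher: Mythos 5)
Your proposal is correct and follows the same route as the paper: part (1) via the universal property of the functorial plus construction applied sectionwise (which is exactly what the paper means by ``immediate from the definition''), and part (2) by applying $\Laone$ to the factorization and observing the composite $\Laone BGL_n \to \Laone\Singaone BGL_n$ is a simplicial weak equivalence, hence invertible in the simplicial homotopy category. One small simplification worth noting: you do not need Schlichting's Lemma 6.5 to secure normality of $\tilde{E}_n(U)$ in $GL_n(U)$ --- the maximal perfect subgroup of a normal subgroup is automatically normal in the ambient group (any conjugate is again a perfect subgroup of the kernel and hence is contained in the maximal one), so the identification with $SL_n$ is not required at that step.
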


\begin{proof}
The first statement is immediate from the definition of $\tilde{E}_n$ and the definition of the plus construction. For the second statement, observe that $BGL_n \to \Singaone BGL_n$ is an $\aone$-weak equivalence, and the target is $\aone$-local by \cite{AHWII}. Therefore, this map becomes a simplicial weak equivalence after $\aone$-localization. The second point then follows immediately from the first.
\end{proof}

\begin{prop}
Suppose $k$ is an infinite field. For any $n \geq 2$, the map on simplicial homotopy fibers induced by the square
\[
\xymatrix{
BGL_{n-1}^{+} \ar[r]\ar[d] & BGL_n^{+} \ar[d] \\
\Singaone BGL_{n-1} \ar[r] & \Singaone BGL_n
}
\]
induces an isomorphism on homotopy sheaves in degrees $\leq n-1$.
\end{prop}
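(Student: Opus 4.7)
The natural strategy is to compare the long exact sequences of homotopy sheaves of the two horizontal fiber sequences via a five-lemma argument. Since the sheaves $\bpi_i$ at play are strongly $\aone$-invariant, hence unramified in the sense of Morel, it suffices to check the claimed isomorphism on sections over local rings $A$ of smooth $k$-schemes at points.

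The first main input is the identification of homotopy groups in the stable range. For the lower row, Corollary \ref{cor:homotopyofsingularsets} gives $\pi_i(\Singaone BGL_m(A)) \cong \K^Q_i(A)$ for $1 \leq i \leq m-1$. For the upper row, Quillen's $+=Q$ theorem combined with Suslin's K-theoretic stability -- which lifts Nesterenko--Suslin's homology stability for $BGL_m(A)$, together with the fact that $\pi_1(BGL_m^+(A)) = \K^Q_1(A)$ under Schlichting's hypotheses, to a statement about homotopy -- yields $\pi_i(BGL_m^+(A)) \cong \K^Q_i(A)$ in the same range. The factorization $BGL_m \to BGL_m^+ \to \Singaone BGL_m$ of Proposition \ref{prop:laoneplussplitting}, combined with the equivalence $BGL^+(A) \simeq \Singaone BGL(A)$ used in the proof of Lemma \ref{lem:homologystabilizationsingularconstruction} (Karoubi--Villamayor equals Quillen K-theory for regular rings), ensures the vertical comparison map is compatible with these identifications. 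Consequently, $BGL_m^+(A) \to \Singaone BGL_m(A)$ is an isomorphism on $\pi_i$ for $1 \leq i \leq m-1$.

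Feeding these identifications into the five-lemma applied to the four-term segment of the LES around the comparison of upper and lower fibers immediately yields the desired isomorphism for $i \leq n-2$. The main obstacle is closing the argument at $i = n-1$: this requires comparing the \emph{first unstable} homotopy of $BGL_{n-1}^+$ with that of $\Singaone BGL_{n-1}$, which lies just outside the range accessible from K-theoretic stability. My plan is to identify $\pi_{n-1}$ of both homotopy fibers directly with $\K^{MW}_n(A)$: the lower fiber is handled by Corollary \ref{cor:homotopyofsingularsets}, and for the upper fiber one combines the relative Hurewicz theorem with the Nesterenko--Suslin identification of $H_n(BGL_n^+(A), BGL_{n-1}^+(A))$ with $\K^M_n(A)$, refined by the $\gm{}(A)$-action (in the spirit of Lemma \ref{lem:relativeHurewiczinvariantscomputation}) to recover the Milnor--Witt K-theory refinement. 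Verifying that the two identifications are compatible under the vertical comparison map then closes the five-lemma at the critical degree $i = n-1$.
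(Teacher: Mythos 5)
Your five-lemma framework handles the range $i \leq n-2$ (where both $\pi_i$ are identified with $\K^Q_i(A)$), but it does not close at the critical degree $i = n-1$, which is precisely where the content of the proposition lies. There are two genuine gaps.

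First, the identification of $\pi_{n-1}$ of the upper fiber $F_n$ cannot be extracted from the relative Hurewicz theorem and Nesterenko--Suslin in the way you describe: the relative Hurewicz theorem computes the relative homology $H_n(BGL_n^+(A),BGL_{n-1}^+(A))$ as the quotient of $\pi_{n-1}(F_n)$ by the $\pi_1$-action, so knowing that this quotient is $\K^M_n(A)$ together with the $\gm{}(A)$-action does not determine $\pi_{n-1}(F_n)$ itself --- e.g.\ a group $\K^M_n(A)$ with trivial action would yield the same homology output. This direction of inference is backwards. The paper avoids this entirely by citing Schlichting's \cite[Theorem 5.38]{Schlichting}, which directly asserts that the presheaves $\bpi_i(F_n)(A)$ vanish for $i \leq n-2$ and that $\bpi_{n-1}(F_n)$ has Zariski sheafification $\K^{MW}_n$.

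Second, and more seriously: even granting that both fibers have $\pi_{n-1}$ abstractly isomorphic to $\K^{MW}_n$, you still have to show the comparison map between them is an isomorphism, and this cannot be deduced from the five-lemma. To close the five-lemma at $i = n-1$ you would need surjectivity of $\pi_n(BGL_n^+(A)) \to \pi_n(\Singaone BGL_n(A))$, which lies outside the stable range and is not available a priori. The step you describe as ``verifying that the two identifications are compatible under the vertical comparison map'' is exactly the place where a new idea is required. The paper's argument at this point is the crucial one: by Proposition \ref{prop:laoneplussplitting} the induced endomorphism $\K^{MW}_n \to \K^{MW}_n$ is \emph{split injective}, and since $\operatorname{End}(\K^{MW}_n) \cong (\K^{MW}_n)_{-n}(k) \cong \GW(k)$ by Lemma \ref{lem:homfromkmwiscontraction}, and $\GW(k)$ contains no idempotents other than $0$ and $1$, a split injection is forced to be an isomorphism. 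Without an analogue of this idempotent argument, your proposal does not establish the claim.
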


\begin{proof}
Suppose $k$ is an infinite field and $A$ is an essentially smooth local $k$-algebra. Write $F_n$ for the homotopy fiber of the map in the top row. By \cite[Theorem 5.38]{Schlichting}, one knows that the presheaves $\bpi_i(F_n)(A)$ vanish for $i \leq n-2$ and have Zariski sheafification $\K^{MW}_n$ for $i = n-1$. Likewise, Proposition \ref{prop:fibersequences} states that the homotopy fiber of the bottom horizontal map is $\Singaone ({\mathbb A}^n \setminus 0)$. Corollary \ref{cor:homotopyofsingularsets} shows that this homotopy fiber is also $(n-2)$-connected and has $(n-1)$-st Zariski homotopy sheaf isomorphic to $\K^{MW}_n$ in degree $i-1$.

To establish the result, it suffices to show that the induced map $\K^{MW}_n \to \K^{MW}_n$ is an isomorphism. By Proposition \ref{prop:laoneplussplitting} it this map is actually split injective. The endomorphism ring of $\K^{MW}_n$ coincides with $(\K^{MW}_n)_{-n}(k) \cong \GW(k)$ for any $n \geq 1$ by Lemma \ref{lem:homfromkmwiscontraction}. Since the ring $GW(k)$ contains no idempotents besides $0$ or $1$ by \cite[Theorem 3.9]{KRW} (see Example 3.11 for details), it follows that our map $\K^{MW}_n \to \K^{MW}_n$ is an isomorphism, as required.
\end{proof}

\begin{lem}
\label{lem:fundamentalgroupplus}
Suppose $k$ is an infinite field. For any integer $n \geq 1$, the map of presheaves $\bpi_1(BGL_n^+) \to \bpi_1(\Singaone BGL_n)$ is an isomorphism after Zariski sheafification.
\end{lem}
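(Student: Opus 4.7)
My plan is to identify both presheaves with $\mathbb{G}_m$ after Zariski sheafification and then use naturality with respect to the determinant to force the comparison map to be the identity. The case $n=1$ reduces to Lemma~\ref{lem:casen=1}: since $\tilde{E}_1$ is the maximal perfect subgroup of the (trivial) kernel $GL_1 \to \pi_0(\Singaone GL_1) = GL_1$, we have $\tilde{E}_1 = 1$, so $BGL_1^+ = BGL_1 = \Singaone BGL_1$ and the map is the identity.

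For $n \geq 2$, I would first compute each side. On the source, $\bpi_1(BGL_n^+) = GL_n/\tilde{E}_n$ as presheaves by construction of the plus construction, and Schlichting's result \cite[Lemma 6.5]{Schlichting} identifies $a_{\Zar}\tilde{E}_n$ with $SL_n$ (the hypothesis $n \geq 2$ together with $k$ infinite guarantees enough elements in residue fields of smooth $k$-schemes). Since sheafification commutes with the quotient, this gives $a_{\Zar}\bpi_1(BGL_n^+) = GL_n/SL_n = \mathbb{G}_m$, the isomorphism being induced by the determinant. On the target, Proposition~\ref{prop:fibersequences}(ii)--(iii) shows that $\mathrm{R}_{\Zar}\Singaone BGL_n$ is a Nisnevich-local $\aone$-invariant model of $BGL_n$ agreeing with $\Singaone BGL_n$ on smooth affines, so $a_{\Zar}\pi_i(\Singaone BGL_n) = \bpi_i^{\aone}(BGL_n)$; in the stable range $1 \leq i \leq n-1$, this gives $a_{\Zar}\bpi_1(\Singaone BGL_n) = \K^Q_1 = \mathbb{G}_m$.

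To conclude that the comparison map is the identity, I would apply naturality of the constructions to the determinant $\det: GL_n \to \mathbb{G}_m$. Since $\tilde{E}_n \subset SL_n = \ker(\det)$, functoriality of the plus construction produces a commutative square
\[
\xymatrix{
BGL_n^+ \ar[r]\ar[d] & \Singaone BGL_n \ar[d] \\
B\mathbb{G}_m^+ \ar[r] & \Singaone B\mathbb{G}_m
}
\]
whose bottom row reduces to $B\mathbb{G}_m \to B\mathbb{G}_m$ by Lemma~\ref{lem:casen=1} and the vanishing of $\tilde{E}_1$. After taking $a_{\Zar}\bpi_1$, both vertical arrows are the identifications $\mathbb{G}_m \isomto \mathbb{G}_m$ constructed above (the left one from $GL_n/SL_n \cong \mathbb{G}_m$, the right one from the determinant realization of $\K^Q_1 \cong \mathbb{G}_m$). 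Commutativity of the resulting square of copies of $\mathbb{G}_m$ then forces the top arrow to be the identity, hence an isomorphism.

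The main obstacle is the final compatibility check in the third paragraph: one must verify that the two independently computed identifications with $\mathbb{G}_m$ (via Schlichting's theorem on one side, via the $\aone$-stable range on the other) genuinely correspond under the determinant map, so that the naturality square collapses as claimed. Once this is in hand, the remaining steps are bookkeeping from Proposition~\ref{prop:fibersequences} and Corollary~\ref{cor:homotopyofsingularsets}.
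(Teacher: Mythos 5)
Your proof is correct and follows essentially the same route as the paper: both identify the two sides with $\gm{}$ via Schlichting's Lemma~6.5 (for the plus construction side) and via the stable-range identification $\bpi_1^{\aone}(BGL_n) \cong \K^Q_1$ (for the singular construction side), and both use the determinant to pin down the comparison map. The paper's own final step is terser — it observes that the map factors through the quotient $GL_n(R) \to GL_n(R)/\{\text{matrices homotopic to the identity}\}$ and concludes — whereas your determinant naturality square makes the compatibility check that you correctly flag as the key point more explicit.
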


\begin{proof}
For a commutative ring $R$, we know that $\pi_1(BGL_n^+(R)) = GL_n(R)/\tilde{E}_n(R)$ by definition, and \cite[Lemma 6.5]{Schlichting} implies that the Zariski sheafification of $\tilde{E}_n$ coincides with that of $SL_n$. In particular, the map $GL_n/\tilde{E}_n \to \gm{}$ given by the determinant is an isomorphism after Zariski sheafification. Likewise, the Zariski sheafification of $\bpi_1(\Singaone BGL_n)$ coincides with $\gm{}$. Now the map in question factors through the natural map $\bpi_1(BGL_n) \to \bpi_1(\Singaone BGL_n)$ by Proposition \ref{prop:laoneplussplitting}. However, the map $\bpi_1(BGL_n) \to \bpi_1(\Singaone BGL_n)$ is the map sending $GL_n(R)$ to the quotient by the subgroup of matrices homotopic to the identity, and the result follows.
\end{proof}

\subsubsection*{Refined weak homotopy invariance results}
\begin{thm}
\label{thm:maintechnical}
If $k$ is an infinite field, and $A$ is an essentially smooth local $k$-algebra then the following statements hold.
\begin{enumerate}[noitemsep,topsep=1pt]
\item There is a commutative diagram having exact rows:
\[
\xymatrix{
H_{n}(BGL_{n-1}(A)) \ar[r]\ar[d] & H_n(BGL_{n}(A)) \ar[r]\ar[d] & \K^{\mathrm{M}}_n(A) \ar[r]\ar[d] & 0 \\
H_n(\Singaone BGL_{n-1}(A)) \ar[r] & H_n(\Singaone BGL_n(A)) \ar[r]^-{\delta_n} & \K^{\mathrm{M}}_n(A) \ar[r] & 0,
}
\]
in particular, $\delta_n$ is surjective.
\item The maps $H_i(\Singaone BGL_n(A)) \to H_i(\Singaone BGL_{n+1}(A))$ are isomorphisms for $i \leq n$.
\item The maps $H_i(BGL_n(A)) \to H_i(\Singaone BGL_n(A))$ are isomorphisms when $i \leq n$.
\end{enumerate}
\end{thm}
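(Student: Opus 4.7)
The plan is to establish (3) as the technical heart of the theorem, then deduce (2) and (1) from it by relatively formal diagram chases. Throughout, I would replace $BGL_n(A)$ by its plus-construction $BGL_n^+(A)$, since that map is a homology isomorphism by Quillen's classical result; the goal then becomes to show that $\phi_n: BGL_n^+(A) \to \Singaone BGL_n(A)$ induces an isomorphism on $H_i$ for $i \leq n$.

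For (3), I would induct on $n$, with the base case $n = 1$ given by Lemma \ref{lem:casen=1}. For the inductive step, compare the long exact sequences in homology of the pairs $(BGL_n^+, BGL_{n-1}^+)$ and $(\Singaone BGL_n, \Singaone BGL_{n-1})$. Both pairs are $(n-1)$-connected, and by the relative Hurewicz theorem the relative homology $H_n$ of each pair identifies with $\K^{MW}_n(A)_{\pi_1}$, which equals $\K^M_n(A)$ as in the proof of Lemma \ref{lem:relativeHurewiczinvariantscomputation}. Moreover, the map induced on these $\K^M_n$'s is an isomorphism: this follows from the Proposition immediately preceding Lemma \ref{lem:fundamentalgroupplus}, which furnishes an isomorphism on $\pi_{n-1} = \K^{MW}_n$ of the homotopy fibers, combined with the matching of the $\pi_1 = A^\times$-actions guaranteed by Lemma \ref{lem:fundamentalgroupplus}. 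One then applies the 5-lemma, observing that the columns at degree $n-1$ are isomorphisms by the inductive hypothesis, Suslin's stabilization theorem, and Lemma \ref{lem:homologystabilizationsingularconstruction}.

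The main obstacle is that the standard 5-lemma requires the surjectivity of $H_n(BGL_{n-1}^+(A)) \to H_n(\Singaone BGL_{n-1}(A))$, which is one degree above the inductive range. I would handle this by strengthening the inductive hypothesis to include this surjectivity (as is standard in Schlichting-style treatments), and then argue separately that the composite $H_n(BGL_n(A)) \to H_n(\Singaone BGL_n(A)) \to H_n(\Singaone BGL(A))$ coincides with $H_n(BGL_n(A)) \to H_n(BGL(A)) \to H_n(\Singaone BGL(A))$, which is an isomorphism by Suslin's stability and the $+=Q$ theorem exactly as in the proof of Lemma \ref{lem:homologystabilizationsingularconstruction}. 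This shows $\phi_n$ is split injective on $H_n$, and combining with the splitting provided by Proposition \ref{prop:laoneplussplitting} furnishes the required surjectivity.

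Once (3) is established, (2) is immediate: the commutative square with horizontals given by the stabilization maps and verticals given by (3) forces the bottom horizontal $H_i(\Singaone BGL_n(A)) \to H_i(\Singaone BGL_{n+1}(A))$ to be an isomorphism for $i \leq n$ because the top one is, by Suslin's stabilization theorem. For (1), the commutative diagram exists by naturality of the long exact sequences of the pairs, and the surjectivity of $\delta_n$ then follows from the commutativity of the rightmost square: $s_n$ is surjective by Nesterenko--Suslin, the vertical $H_n(BGL_n(A)) \to H_n(\Singaone BGL_n(A))$ is an isomorphism by (3), hence $\delta_n$ must be surjective.
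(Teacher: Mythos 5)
Your proposal inverts the paper's decomposition: you try to establish (3) first by an induction on $n$, whereas the paper establishes (1) directly and then derives (2) and (3) from it. This inversion creates a genuine gap that the paper's order of argument is specifically designed to avoid.

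The gap is in your inductive step for (3). Applying the four- or five-lemma to the ladder of long exact sequences of the pairs $(BGL_n^+,BGL_{n-1}^+)$ and $(\Singaone BGL_n,\Singaone BGL_{n-1})$ to conclude that $H_n(BGL_n^+(A))\to H_n(\Singaone BGL_n(A))$ is an isomorphism requires, as you note, surjectivity of $H_n(BGL_{n-1}^+(A))\to H_n(\Singaone BGL_{n-1}(A))$. That is a degree-$n$ statement about the index-$(n-1)$ space, and your two arguments for it do not actually supply it. First, the composite argument from Lemma \ref{lem:homologystabilizationsingularconstruction} establishes split injectivity of $H_n(BGL_n(A))\to H_n(\Singaone BGL_n(A))$, i.e., degree $n$ for index $n$, not degree $n$ for index $n-1$; there is an index mismatch. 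Second, Proposition \ref{prop:laoneplussplitting}(2) splits $\Laone BGL_n\to\Laone BGL_n^+$ only \emph{after} $\aone$-localization, and $\aone$-localization alters homology in general, so this does not descend to a splitting on $H_*$ of the unlocalized spaces. Moreover, it is not even clear that the surjectivity you posit as a strengthened inductive hypothesis holds: the cokernel of $H_n(BGL_{n-1}^+(A))\to H_n(BGL_n^+(A))$ is $\K^M_n(A)$, which is typically nonzero, and since $H_n(BGL_n^+(A))\to H_n(\Singaone BGL_n(A))$ is an isomorphism, surjectivity of the index-$(n-1)$ map in degree $n$ is far from formal.

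The paper's argument sidesteps this entirely: it establishes the isomorphism $\K^M_n(A)\to\K^M_n(A)$ on relative homology (via Schlichting's Theorem 5.38, the unnamed Proposition preceding Lemma \ref{lem:fundamentalgroupplus}, Lemma \ref{lem:fundamentalgroupplus}, and the relative Hurewicz argument of Lemma \ref{lem:relativeHurewiczinvariantscomputation}), and then observes simply that the composite
\[
H_n(BGL_n^+(A))\longrightarrow H_n(\Singaone BGL_n(A))\stackrel{\delta_n}{\longrightarrow}\K^M_n(A)
\]
equals the composite of the surjective Suslin map $H_n(BGL_n^+(A))\to\K^M_n(A)$ with an isomorphism, hence is surjective; therefore $\delta_n$ is surjective with no appeal to any five-lemma and no knowledge of $H_n$ of the index-$(n-1)$ spaces. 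Point (2) then follows from surjectivity of $\delta_{n+1}$ via the long exact sequence of the pair $(\Singaone BGL_{n+1},\Singaone BGL_n)$, and Point (3) follows by combining (2) with the split-injectivity argument already in Lemma \ref{lem:homologystabilizationsingularconstruction}. You have identified the correct central ingredient — the isomorphism of relative homology groups — but have assembled the deduction in an order that requires an additional input you cannot supply.
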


\begin{proof}
The diagram in question arises from the long exact sequence in relative homology. The maps $BGL_n \to \Singaone BGL_n$
factor through $BGL_n^+$ compatibly with $n$ by Proposition \ref{prop:laoneplussplitting}.  Observe that $H_n(BGL_n^+(A),BGL_{n-1}^+(A)) \cong \K^{\mathrm{M}}_n(A)$ by \cite[Theorem 5.38]{Schlichting} and Lemma \ref{lem:fundamentalgroupplus} and the same relative Hurewicz argument as in Lemma \ref{lem:relativeHurewiczinvariantscomputation}. Moreover, the induced map
\[
H_n(BGL_n^+(A),BGL_{n-1}^+(A)) \longrightarrow H_n(\Singaone BGL_n(A),\Singaone BGL_{n-1}(A))
\]
is an isomorphism. In other words, there is a commutative diagram of exact sequences of the form:
\[
\xymatrix{
H_{n}(BGL_{n-1}^+(A)) \ar[r]\ar[d] & H_n(BGL_{n}^+(A)) \ar[r]\ar[d] & \K^{\mathrm{M}}_n(A) \ar[r]\ar[d] & 0 &\\
H_n(\Singaone BGL_{n-1}(A)) \ar[r] & H_n(\Singaone BGL_n(A)) \ar[r]^-{\delta_n} & \K^{\mathrm{M}}_n(A) \ar[r] & 0.
}
\]
where the map $\K^{\mathrm{M}}_n(A) \to \K^{\mathrm{M}}_n(A)$ is an isomorphism. Since the maps $BGL_n(A) \to BGL_n^+(A)$ are homology isomorphisms, the result follows. It follows immediately that $\delta_n$ is surjective.

The fact that $\delta_n$ is surjective implies the map $H_n(\Singaone BGL_n(A)) \to H_n(\Singaone BGL_{n+1}(A))$ is an isomorphism, i.e., Point (2). Thus, we conclude that the stabilization map $H_n(\Singaone BGL_n(A)) \to H_n(\Singaone BGL(A))$ is an isomorphism as well. In the proof of Lemma \ref{lem:homologystabilizationsingularconstruction}, we established that the map $H_n(BGL_n(A)) \to H_n(\Singaone BGL(A))$ is an isomorphism, and this map factors through the map $H_n(BGL_n(A)) \to H_n(\Singaone BGL_n(A))$, which we therefore also conclude is an isomorphism; thus Point (3) is established.
\end{proof}

\begin{rem}
If the map $H_i(BGL_n(A)) \to H_i(\Singaone BGL_n(A))$ is an isomorphism in degrees $\leq n$, we say weak homotopy invariance holds for the (integral) homology of $GL_n$ in degrees $\leq n$ \cite[Definition 2.3]{HutchinsonWendt}. The notion of weak homotopy invariance was introduced by F. Morel in his approach to the Friedlander--Milnor conjecture (cf. \cite{MFM}, though this notion does not explicitly appear there). In particular, Theorem \ref{thm:maintechnical} implies that weak homotopy invariance holds in degrees $\leq n$ for $GL_n$ over local rings. In contrast, if one replaces $GL_n$ by $SL_2$, one knows that weak homotopy invariance {\em fails} in degree $3$, e.g., by \cite[Theorem 1]{HutchinsonWendt}.
\end{rem}

\subsubsection*{The comparison theorem}
\begin{thm}
\label{thm:suslinmorphismscoincide}
Assume $k$ is an infinite field, $n \geq 1$ is an integer, and $A$ is an essentially smooth local $k$-algebra.
\begin{enumerate}[noitemsep,topsep=1pt]
\item There is an exact sequence of the form
\[
\K^Q_n(A) \longrightarrow \K^{\mathrm{M}}_n(A) \longrightarrow \mathbf{S}_n(A) \longrightarrow 0,
\]
where the leftmost map is the Suslin--Hurewicz homomorphism.
\item Suslin's conjecture holds in degree $n$ if and only if the canonical surjection $\K^{\mathrm{M}}_n/(n-1)!(A) \to \mathbf{S}_n(A)$ is an isomorphism.
\end{enumerate}
\end{thm}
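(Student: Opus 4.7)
The plan is to establish Part (1) by showing that the morphism $\psi_n \colon \K^Q_n \to \K^M_n$ of Definition \ref{defn:sn} coincides on sections over $A$ with Suslin's Hurewicz homomorphism; Part (2) will then be a formal consequence. I would first use Theorem \ref{thm:maintechnical} to transport Suslin's Hurewicz map through the singular construction. Weak homotopy invariance $H_n(BGL_n(A)) \isomto H_n(\Singaone BGL_n(A))$ identifies Suslin's morphism $s_n$ with the relative Hurewicz boundary $\delta_n$ of Lemma \ref{lem:relativeHurewiczinvariantscomputation}, and combined with the homology stabilization $H_n(\Singaone BGL_n(A)) \isomto H_n(\Singaone BGL(A))$ and the identification $\Singaone BGL(A) \simeq BGL(A)^+$ (used in the proof of Lemma \ref{lem:homologystabilizationsingularconstruction} via Karoubi--Villamayor K-theory and the $+ = Q$ theorem), Suslin's Hurewicz map is recast as the composite
\[
\K^Q_n(A) = \pi_n(\Singaone BGL(A)) \longrightarrow H_n(\Singaone BGL(A)) \cong H_n(\Singaone BGL_n(A)) \stackrel{\delta_n}{\longrightarrow} \K^M_n(A).
\]

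The comparison with $\psi_n$ then reduces to a single commutative square obtained by applying the Hurewicz transformation to the $\aone$-fiber sequence $\Singaone(\aone^n \setminus 0) \to \Singaone BGL_{n-1} \to \Singaone BGL_n$:
\[
\xymatrix{
\pi_n(\Singaone BGL_n(A)) \ar[r]^-{h} \ar[d]_-{\partial} & H_n(\Singaone BGL_n(A)) \ar[d]^-{\delta_n} \\
\K^{\MW}_n(A) \ar[r] & \K^M_n(A).
}
\]
Commutativity will follow from naturality of the Hurewicz map on the pair $(\Singaone BGL_n, \Singaone BGL_{n-1})$ together with the identification, implicit in Lemma \ref{lem:relativeHurewiczinvariantscomputation}, of the relative Hurewicz map modulo the $\pi_1 = \gm{}(A)$-action with the quotient $\K^{\MW}_n(A) \twoheadrightarrow \K^M_n(A)$ by $\eta$-divisible elements. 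Post-composing the square with the surjection $\pi_n(\Singaone BGL_n(A)) \twoheadrightarrow \pi_n(\Singaone BGL_{n+1}(A)) = \K^Q_n(A)$, the left-then-bottom route is $\psi_n$ by its very construction, and the top-then-right route is the reformulation of Suslin's Hurewicz map derived above. This yields the equality of the two maps on sections, and the exact sequence of Part (1) then follows from $\mathbf{S}_n = \operatorname{coker}(\psi_n)$ combined with Proposition \ref{prop:propertiesofsn}(iv), which guarantees that $\mathbf{S}_n$ is a quotient of $\K^M_n$ so that the sheaf cokernel is computed correctly on local sections.

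Part (2) is then formal: by Part (1), Suslin's conjecture (image of Suslin's Hurewicz equals $(n-1)!\K^M_n(A)$) translates to the statement that the kernel of the surjection $\K^M_n(A) \twoheadrightarrow \mathbf{S}_n(A)$ is exactly $(n-1)!\K^M_n(A)$. By Proposition \ref{prop:propertiesofsn}(ii), this kernel already contains $(n-1)!\K^M_n(A)$, so equality is equivalent to the canonical surjection $\K^M_n/(n-1)!(A) \to \mathbf{S}_n(A)$ of Proposition \ref{prop:propertiesofsn}(iv) being injective, hence an isomorphism. The main technical obstacle will be the central commutative square: one must pin down the $\gm{}(A)$-action on $\K^{\MW}_n(A)$ carefully enough to identify its coinvariants with $\K^M_n(A)$ and to match these with the $\eta$-divisibility quotient appearing in the definition of $\psi_n$ --- precisely the identification that powers Lemma \ref{lem:relativeHurewiczinvariantscomputation} via the computations of \cite{AsokFaselThreefolds, AsokFaselpi3a3minus0}. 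Once this compatibility is in hand, the rest is essentially a diagram chase plus invocation of the weak homotopy invariance package of Theorem \ref{thm:maintechnical}.
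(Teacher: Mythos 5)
Your proposal follows essentially the same route as the paper's proof. You identify Suslin's Hurewicz map with the composite $\K^Q_n(A) = \pi_n(\Singaone BGL(A)) \to H_n(\Singaone BGL(A)) \cong H_n(\Singaone BGL_n(A)) \stackrel{\delta_n}{\to} \K^M_n(A)$ using the weak homotopy invariance of Theorem \ref{thm:maintechnical} and the Karoubi--Villamayor identification of $\Singaone BGL(A)$ with the plus construction, then match this to $\psi_n$ via the commutative square furnished by functoriality of the (relative) Hurewicz map applied to the pair $(\Singaone BGL_n, \Singaone BGL_{n-1})$ --- precisely the two steps the paper carries out, with Part (2) falling out formally.

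=== END REVIEW ===
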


\begin{proof}
For Point (1), recall from the proof of Lemma \ref{lem:homologystabilizationsingularconstruction} that the map $BGL^+(A) \to \Singaone BGL(A)$ is a weak equivalence. On the other hand, Theorem \ref{thm:maintechnical}, and functoriality of the Hurewicz map yield a commutative diagram of the form
\[
\resizebox{6in}{!}{\xymatrix{
\pi_n(BGL^+(A)) \ar[r]\ar[d] & H_n(BGL^+(A)) \ar[r]\ar[d] & H_n(BGL_n(A)) \ar[r]\ar[d] & \K^{\mathrm{M}}_n(A) \ar[r]\ar[d] & 0 \\
\pi_n(\Singaone BGL(A)) \ar[r] & H_n(\Singaone BGL(A)) \ar[r]& H_n(\Singaone BGL_n(A)) \ar[r]^-{\delta_n}& \K^{\mathrm{M}}_n(A) \ar[r] & 0
}}
\]
where all the vertical morphisms are isomorphisms. Thus, the Suslin--Hurewicz morphism coincides with the composite in the bottom row under these isomorphisms; we call this composite $\psi'_n$.

Next, we claim that the composite in the bottom row coincides with the morphism $\psi_n$. Indeed, this is essentially a consequence of functoriality of the relative Hurewicz morphism. To this end, observe that the stabilization map $\pi_n(\Singaone BGL_{n+1}(A)) \cong \pi_n(\Singaone BGL(A))$ is an isomorphism by Lemma \ref{lem:homologystabilizationsingularconstruction}. The relative Hurewicz map $\pi_{n-1}(\Singaone ({\mathbb A}^n \setminus 0)(A)) \to H_{n-1}(\Singaone BGL_n(A),\Singaone BGL_{n-1}(A))$ coincides with the standard quotient map $\K^{MW}_n(S) \to \K^{\mathrm{M}}_n(S)$ by the proof of Lemma \ref{lem:relativeHurewiczinvariantscomputation}. By functoriality of (relative) Hurewicz maps, we therefore have a commutative square of the form
\[
\xymatrix{
\pi_n(\Singaone BGL_n(A)) \ar[r]\ar[d] & \K^{MW}_n(A) \ar[d] \\
H_n(\Singaone BGL_n(A)) \ar[r] & \K^{\mathrm{M}}_n(A).
}
\]
Now, the map $\psi_n$ is defined by factorization through the stabilization map $\pi_n(\Singaone BGL_n(A)) \to \pi_{n}(\Singaone BGL_{n+1}(A))$. Using the splitting of Lemma \ref{lem:homologystabilizationsingularconstruction}, it follows immediately that the map \\$H_n(\Singaone BGL_n(A)) \to \K^{\mathrm{M}}_n(A)$ also factors through the stabilization map
\[
H_n(\Singaone BGL_n(A)) \to H_n(\Singaone BGL_{n+1}(A)).
\]
Combining these observations and appealing again to functoriality of Hurewicz maps, we conclude that $\psi_n$ coincides with $\psi'_n$.

Point (2) follows immediately from Point (1).
\end{proof}

%\begin{rem}
%Note that Theorem \ref{thm:suslinmorphismscoincide} shows that the two morphisms $\K^Q_n(A) \to \K^{\mathrm{M}}_n(A)$ defined by Suslin in \cite{Suslin} and \cite{SuslinMennicke} coincide; this answers in the affirmative the question posed on \cite[p. 369]{Suslin}. Indeed, the map $\psi_n$ coincides with that defined in \cite{SuslinMennicke} essentially by construction.
%\end{rem}

\section{The sheaf $\pi_4^{\aone}(S^{3+3\alpha})$ revisited}
\label{s:pi4refinement}
In the previous section, we showed that Suslin's conjecture from the introduction was equivalent to showing that the natural epimorphism $\K^{\mathrm{M}}_n \to \mathbf{S}_n$ induced an isomorphism $\K^{\mathrm{M}}_n/(n-1)! \to \mathbf{S}_n$. In this section, we verify this assertion in the case $n = 5$. To do this, we proceed in two steps.

We refine \cite[Theorem 5.2.5]{AWW}, which describes the $\aone$-homotopy sheaf $\bpi_4^{\aone}({\pone}^{\sma 3})$ under certain restrictions on the base field in two ways. By analyzing real and $\ell$-adic realizations, we show that most of the restrictions on the base field in the statement of \cite[Theorem 5.2.5]{AWW} are superfluous. The outcome of this analysis is a description of the sheaf $\bpi_4^{\aone}({\pone}^{\sma 3})$ that parallels that of $\bpi_n^{\aone}(BGL_n)$ in \S \ref{ss:snandhomologyofsing}: it may be described as an extension of a sheaf defined in terms of higher Grothendieck--Witt theory by a ``non-stable" part, which is related to the sheaf $\mathbf{S}_5$ analyzed in the previous section (though we caution the reader that ``non-stable" is in quotes here to distinguish it from analyzing what happens after $\pone$-stabilization, which will also be of interest to us).

Second, by comparing our refined description to the stable results of \cite{RSO}, we are able to conclude that $\mathbf{S}_5$ coincides with $\K^{\mathrm{M}}_5/24$. The main computational result is achieved in Theorem \ref{thm:maincomputation}. We conclude with Theorem \ref{thm:suslinsconjecturedegree5}, which shows Suslin's conjecture holds in degree $5$ under appropriate hypotheses.

\subsection{The degree map and realization}
\label{ss:realization}
In this section, we establish some preliminary results about real realization of homotopy groups and, in particular, the interaction between real realization and module structures on $\aone$-homotopy sheaves.

\subsubsection*{Compositions in $\aone$-homotopy groups}
Fix an integer $p \geq 2$. For any integer $q \geq 0$ we may consider the motivic sphere $S^{p+q\alpha}$. The abelian group $[S^{p+q\alpha},S^{p+q\alpha}]_{\aone}$ of $\aone$-homotopy endomorphisms of $S^{p+q\alpha}$ admits a natural ring structure via composition. If $q = 0$, this ring is isomorphic to $\Z$ since the simplicial $p$-sphere $S^p$ is already $\aone$-local and thus the $\aone$-homotopy classes of maps coincide with those in the simplicial homotopy category. If $q > 0$, this ring is isomorphic to $\K^{\MW}_0(k)$ by Morel's computations. More generally, we recall the following result of Morel's.

\begin{lem}[{\cite[Corollary 6.43]{MField}}]
If $p \geq 2$ is an integer and $r,q \geq 0$ are integers, then
\[
[S^{p+q\alpha},S^{p+r\alpha}] = \begin{cases}\K^{\MW}_{r-q}(k) & \text{ if } r > 0 \\ \Z & \text{ if } r = q = 0 \\ 0 & \text{ otherwise}. \end{cases}
\]
\end{lem}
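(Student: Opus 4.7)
The plan is to deduce this from Morel's $\aone$-connectivity theorem, his identification of the first non-vanishing $\aone$-homotopy sheaf of the motivic sphere $S^{p+r\alpha}$, and the contraction calculus for strictly $\aone$-invariant sheaves.

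First I would invoke Morel's connectivity theorem: because $S^p$ is $\aone$-$(p-1)$-connected and $\aone$-connectivity is preserved under smashing with $\Gm$, the sphere $S^{p+r\alpha}$ is $\aone$-$(p-1)$-connected for every $r \geq 0$. Since $p \geq 2$, the pointed set $[S^{p+q\alpha}, S^{p+r\alpha}]_{\aone}$ is then canonically an abelian group. The simplicial suspension-loop adjunction in $\hop{k}$, combined with the fact that the strictly $\aone$-invariant sheaf $\bpi_p^{\aone}(S^{p+r\alpha})$ governs maps out of $\Gm^{\wedge q}$ via its $q$-fold contraction, identifies this group canonically with
\[
\bpi_p^{\aone}(S^{p+r\alpha})_{-q}(k).
\]

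Second, I would identify the first non-vanishing $\aone$-homotopy sheaf $\bpi_p^{\aone}(S^{p+r\alpha})$. For $r \geq 1$ this should be $\K^{\MW}_r$; this follows from Morel's computation cited in the excerpt, $\bpi_{n-1}^{\aone}(\A^n \setminus 0) \cong \K^{\MW}_n$, combined with the $\aone$-equivalence $\A^n \setminus 0 \simeq S^{(n-1)+n\alpha}$ and iterated simplicial suspension to adjust the simplicial dimension from $r-1$ to $p$. For $r = 0$, the sphere $S^p$ is purely simplicial, and the $\aone$-Hurewicz theorem together with the classical identification $\pi_p(S^p) = \Z$ yields $\bpi_p^{\aone}(S^p) \cong \Z$, viewed as a constant sheaf.

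Third, I would apply the contraction formula. Morel proves $(\K^{\MW}_r)_{-1} \cong \K^{\MW}_{r-1}$ for every $r \in \Z$; iterating $q$ times gives $\K^{\MW}_{r-q}(k)$, settling the $r > 0$ case. For the constant sheaf $\Z$, the evaluation map $\Z(U \times \Gm) \to \Z(U)$ is an isomorphism, whence $(\Z)_{-1} = 0$ by definition of contraction. This yields the remaining cases: $\Z$ when $q = r = 0$, and $0$ whenever $r = 0$ and $q > 0$.

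The main obstacle is already subsumed in Morel's deep identification $\bpi_p^{\aone}(S^{p+r\alpha}) \cong \K^{\MW}_r$ for $r \geq 1$, which rests on his $\aone$-connectivity theorem together with the explicit description of Milnor-Witt K-theory as the universal strictly $\aone$-invariant recipient of the appropriate Hurewicz map. Granted that input, everything else is a routine unpacking of the suspension-loop adjunction and the contraction identity for $\K^{\MW}$.
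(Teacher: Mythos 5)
The paper gives no proof of this statement: it is a direct citation of \cite[Corollary 6.43]{MField}, recorded for later use. So there is no paper-internal argument to compare against, and I evaluate your reconstruction on its own terms. Your general outline---connectivity $\Rightarrow$ abelian group structure, identification of $[S^{p+q\alpha},S^{p+r\alpha}]_{\aone}$ with the $q$-fold contraction of the first nontrivial homotopy sheaf evaluated at $k$, computation of that sheaf, and the contraction identity for $\K^{\MW}$---is correct and is essentially Morel's own strategy.

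There is, however, a genuine gap in your identification of $\bpi_p^{\aone}(S^{p+r\alpha}) \cong \K^{\MW}_r$ for $r \geq 1$. You propose to start from $\bpi_{r-1}^{\aone}(\A^r \setminus 0) \cong \K^{\MW}_r$ together with $\A^r \setminus 0 \simeq S^{(r-1)+r\alpha}$ and then ``iterate simplicial suspension to adjust the simplicial dimension from $r-1$ to $p$.'' But simplicial suspension only \emph{increases} the simplicial degree: $S^{p+r\alpha}$ is a simplicial suspension of $\A^r \setminus 0$ precisely when $p \geq r-1$. If $r > p+1$ (e.g.\ $p = 2$, $r = 4$), there is no space $\A^n \setminus 0$ of which $S^{p+r\alpha}$ is a suspension, and your argument does not reach that case, even though the lemma asserts the formula for arbitrary $r \geq 0$. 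The fix is to drop the detour through $\A^n \setminus 0$ and apply the $\aone$-Hurewicz theorem uniformly: since $S^{p+r\alpha}$ is $\aone$-$(p-1)$-connected and $p \geq 2$, one has $\bpi_p^{\aone}(S^{p+r\alpha}) \cong \tilde{\mathrm{H}}_p^{\aone}(S^{p+r\alpha}) \cong \tilde{\mathrm{H}}_0^{\aone}(\gm{\wedge r})$, and Morel's fundamental computation identifies the latter with $\K^{\MW}_r$; this covers all $p \geq 2$, $r \geq 1$ at once (and also subsumes your separate treatment of $r=0$). The remaining ingredients in your sketch---the passage $[S^{p+q\alpha}, S^{p+r\alpha}]_{\aone} \cong \bpi_p^{\aone}(S^{p+r\alpha})_{-q}(k)$, the contraction formula $(\K^{\MW}_n)_{-1} \cong \K^{\MW}_{n-1}$, and $(\Z)_{-1} = 0$ for the constant sheaf---are all correct as stated.
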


Note that $\gm{}$-suspension induces group homomorphisms
\[
[S^{p+q\alpha},S^{p+r\alpha}]_{\aone} \longrightarrow [S^{p+(q+1)\alpha},S^{p+(r+1)\alpha}]_{\aone}.
\]
If $q,r > 0$, then these maps are isomorphisms. If $r = q = 0$, then the map is the unit map $\Z \to \K^{\MW}_0(k)$, while if $r = 0$ and $q \neq 0$, the map is the zero map.

Given a third integer $s \geq 0$, composition yields homomorphisms
\[
\begin{split}
 [S^{p+r\alpha},S^{p+s\alpha}]_{\aone} \times [S^{p+q\alpha},S^{p+r\alpha}]_{\aone} &\longrightarrow [S^{p+q\alpha},S^{p+s\alpha}]_{\aone} \\
(f,g) &\longmapsto f \circ g.
\end{split}
\]
These maps are bilinear and associative and compatible with $\gm{}$-suspension as described above. In particular, we obtain an ${\mathbb N} \times {\mathbb N}$-graded ring structure on $\bigoplus_{q,r \geq 0} [S^{p+q\alpha},S^{p+r\alpha}]_{\aone}$.

\subsubsection*{Real realization of endomorphism rings}
If $k$ is a field and $k \hookrightarrow \real$ is an embedding, then sending a smooth $k$-scheme $X$ to the topological space $X(\real)$ equipped with its usual structure of a real manifold extends to a functor
\[
{\mathfrak R}: \hop{k} \longrightarrow \mathscr{H}_{\bullet}.
\]
At the level of homotopy categories, this functor was exposed in \cite[\S 3 pp. 121-122]{MV} and was later described in terms of a Quillen adjunction in \cite[\S 5.3]{DuggerIsaksen}. It follows from the construction of \cite{DuggerIsaksen} that real realization is a simplicial functor of simplicial model categories.

Since the real realization of $\gm{}$ is homotopy equivalent to $S^0$, it follows that the real realization of $S^{p+q\alpha}$ is $ S^p$ for any $q \geq 0$. Therefore, by functoriality, real realization induces a ring homomorphism
\[
{\mathfrak R}: \bigoplus_{q,r \geq 0} [S^{p+q\alpha},S^{p+r\alpha}]_{\aone} \longrightarrow [S^p,S^p] = \Z.
\]
We now analyze this ring homomorphism; we begin with the degree $(0,0)$-part.

Following \cite[p. 53]{MField}, set $\epsilon = - \langle -1 \rangle \in \K^{\MW}_0(k)$ . The element $\epsilon$ represents the $\aone$-homotopy class of the endomorphism of $S^{2+2\alpha}$ obtained by switching the two factors of $\gm{}$ under the identification $[S^{2+2\alpha},S^{2+2\alpha}]_{\aone} \cong \K^{\MW}_0(k)$ \cite[Lemma 6.1.1(2)]{MIntro}.

The next result is a straightforward consequence of the description of the Grothendieck--Witt ring of the real numbers combined with the fact that the real realization of $\epsilon$ is the identity map $S^2 \to S^2$.

\begin{lem}
\label{lem:realrealizationdegree00}
For any integers $p,q$, $p \geq 2, q \geq 0$, the ring map $[S^{p+q\alpha},S^{p+q\alpha}] \to [S^p,S^p]$ induced by real realization is an isomorphism if $q = 0$ and the surjection $\Z[\epsilon]/(\epsilon^2 - 1) \to \Z$ given by evaluation at $1$ if $q \neq 0$.
\end{lem}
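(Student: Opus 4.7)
The plan is to handle the cases $q = 0$ and $q \geq 1$ separately.

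For $q = 0$, both endomorphism rings are isomorphic to $\Z$ by the lemma preceding the statement. Since real realization is induced by a symmetric monoidal Quillen adjunction \cite{DuggerIsaksen}, the induced map on endomorphism rings is a unital ring homomorphism $\Z \to \Z$, which is necessarily the identity.

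For $q \geq 1$, the source identifies with $\K^{\MW}_0(k) \cong GW(k)$ by Morel's computation. The idea is to factor the real realization through base change along $k \hookrightarrow \real$, giving a composition $GW(k) \to GW(\real) \to \Z$. By Sylvester's law of inertia one has $GW(\real) \cong \Z[\epsilon]/(\epsilon^2 - 1)$, with $\epsilon$ corresponding to $-\ee$; the second map is then a unital ring homomorphism, hence uniquely determined by the image of $\epsilon$. Now $\epsilon$ represents the swap of the two $\gm{}$-factors in $S^{2+2\alpha}$ by the remark preceding the statement. Under real realization $\gm{}$ becomes $\real^{\times} \simeq S^0$ (based at $1$), and the swap on $S^0 \wedge S^0 \cong S^0$ fixes the unique non-basepoint and is therefore the identity. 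Consequently $\epsilon$ realizes to $1 \in [S^p,S^p] = \Z$, and the second map is the evaluation-at-$1$ surjection, as required.

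The main task is bookkeeping: one must verify that Morel's identification $[S^{p+q\alpha},S^{p+q\alpha}]_{\aone} \cong \K^{\MW}_0(k)$ is compatible with base change, and that real realization is symmetric monoidal enough to induce a ring homomorphism on endomorphism rings. Both facts are encoded in the cited references. Once these are in place, the heart of the computation reduces to the transparent combinatorial observation that the swap of two copies of $S^0$ is the identity.
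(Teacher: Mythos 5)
Your proof is correct and takes essentially the same approach as the paper's, which simply cites the description of $GW(\real)$ together with the fact that $\mathfrak{R}(\epsilon) = \mathrm{id}_{S^2}$; you supply the missing detail that the swap of two $\gm{}$-factors realizes to the swap on $S^0 \wedge S^0$, which is literally the identity.
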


The groups $\K^{\MW}_q(\real)$ are generated by the expressions $[a]$ for $a \in \real^{\times}$ and an element $\eta$. Therefore, to understand real realization more generally, we need to understand the real realizations of these elements.

\begin{prop}
\label{prop:realizationofelements}
The following statements hold about real realization:
\begin{enumerate}[noitemsep,topsep=1pt]
\item For any $a \in \real^{\times}$,
\[
\mathfrak{R}([a]) = \begin{cases}0 & \text{ if } a > 0 \text{, and} \\ 1 & \text{ if } a < 0; \end{cases}
\]
\item For any $a \in \real^{\times}$,
\[
\mathfrak{R}(\langle a \rangle) = \begin{cases}1 & \text{ if } a > 0 \text{, and} \\ -1 & \text{ if } a < 0; \text{ and } \end{cases}
\]
\item If $\eta: S^{1+2\alpha} \to S^{1+\alpha}$ is the motivic Hopf map, then ${\mathfrak R}(\eta) = -2$.
\end{enumerate}
\end{prop}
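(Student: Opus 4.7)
The three assertions describe how real realization $\mathfrak{R}$ acts on the standard generators of $\bigoplus_{q,r} [S^{p+q\alpha}, S^{p+r\alpha}]_{\aone}$, namely the symbols $[a]$, the units $\langle a \rangle$, and the motivic Hopf map $\eta$. Since $\mathfrak{R}$ is a ring homomorphism into $\bigoplus_p [S^p,S^p] = \Z$, and since the relations $\langle a \rangle = 1 + \eta[a]$ and $\epsilon = -\langle -1 \rangle$ hold in the source, the plan is to compute $\mathfrak{R}$ geometrically on a small set of generators and then propagate via these relations.

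For (1), I would represent $[a] \in \K^{\MW}_1(k)$ by the pointed morphism $S^0 \to \gm{}$ sending the non-basepoint to $a \in k^{\times}$. Applying $\mathfrak{R}$ yields the pointed map $S^0 \to \gm{}(\real) = \real^{\times}$; the target, pointed at $1$, has the pointed homotopy type of $S^0$, with the two components corresponding respectively to the positive and negative reals. Thus the map is null-homotopic when $a > 0$ and represents the nontrivial pointed class when $a < 0$; after a single suspension these become the maps $S^p \to S^p$ of degree $0$ and $1$, respectively.

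For (3), I would first compute $\mathfrak{R}(\epsilon)$. The element $\epsilon$ is represented by the swap of the two $\gm{}$-factors in $S^p \wedge \gm{} \wedge \gm{}$, and under real realization this becomes the swap on $S^0 \wedge S^0 \cong S^0$, which is the identity because the standard twist $\tau$ on $S^m \wedge S^n$ has degree $(-1)^{mn}$. Hence $\mathfrak{R}(\epsilon) = 1$ and $\mathfrak{R}(\langle -1 \rangle) = -1$. Combining the relation $\langle -1 \rangle = 1 + \eta[-1]$ with the case $a = -1$ of (1), which gives $\mathfrak{R}([-1]) = 1$, then forces $\mathfrak{R}(\eta) = -2$. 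One may independently corroborate this by noting that the standard geometric model $\eta \colon \A^2 \setminus 0 \to \PP^1$ real realizes to the quotient $\real^2 \setminus 0 \to \real\PP^1$, a double cover of $S^1$ by $S^1$ and hence a self-map of $S^1$ of degree $\pm 2$; pinning down the sign directly would require tracking orientations on $\real^2 \setminus 0$ and $\real\PP^1$, whereas routing through $\epsilon$ sidesteps this entirely.

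Finally, (2) is immediate from $\langle a \rangle = 1 + \eta[a]$, multiplicativity of $\mathfrak{R}$, and the values $\mathfrak{R}(\eta) = -2$ and $\mathfrak{R}([a]) \in \{0,1\}$ just computed: the formula $\mathfrak{R}(\langle a \rangle) = 1 - 2\mathfrak{R}([a])$ evaluates to $1$ for $a > 0$ and to $-1$ for $a < 0$. The only genuine technical nuisance in the whole plan is fixing the sign of $\mathfrak{R}(\eta)$, which the approach above dispatches by the orientation-free computation of $\mathfrak{R}(\epsilon)$ together with the Morel relation at $a = -1$.
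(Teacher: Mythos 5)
Your proof is correct and follows essentially the same approach as the paper. Point (1) is established the same way (the map $S^0 \to \gm{}(\real) = \real^\times$ landing in the positive or negative component); the key sign input in both arguments is $\mathfrak{R}(\epsilon) = 1$, obtained from the swap description of $\epsilon$ (the paper routes this through Lemma \ref{lem:realrealizationdegree00}, you compute the degree of the twist directly, which amounts to the same thing); and both then combine $\epsilon = -\langle -1 \rangle$ with the Morel relation $\langle -1 \rangle = 1 + \eta[-1]$ and multiplicativity to get $\mathfrak{R}(\eta) = -2$. The one cosmetic difference is that you establish (3) before (2) and then deduce (2) from the formula $\mathfrak{R}(\langle a \rangle) = 1 + \mathfrak{R}(\eta)\mathfrak{R}([a])$, whereas the paper proves (2) directly from the classification of rank-one forms over $\real$ ($\langle a\rangle \cong \langle 1\rangle$ or $\langle -1\rangle$ according to sign) and only then uses (2) to derive (3). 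The logical content is identical; your ordering has the mild advantage of deriving all of (2) from a single relation rather than a case split.
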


\begin{proof}
For the first point, by definition $[a]$ is the stabilization of the map $S^0_k \to \gm{}$ sending the non-base-point of $S^0_k$ to the element $a \in \gm{}(k)$. It follows that if $a > 0$, then the real realization of this map is the constant map, which has degree $0$. Likewise, if $a < 0$, then the real realization of this map is the identity map $S^0 \to S^0$, whose suspensions all have degree $1$.

For the second point, consider the element $\langle a \rangle \in \K^{MW}_0(\real)$. If $a > 0$, then $\langle a \rangle \cong \langle 1 \rangle = 1$ and is sent to $1$ under real realization. If $a < 0$, then $\langle a \rangle \cong \langle -1 \rangle$. Since ${\mathfrak R}(\epsilon) = 1$ by Lemma \ref{lem:realrealizationdegree00}, it follows that ${\mathfrak R}(\langle -1 \rangle) = -1$.

Finally, since $\langle -1 \rangle = 1 + \eta[-1]$ and real realization is a ring homomorphism, we conclude that $\mathfrak{R}(\eta) = -2$ from the discussion above.
\end{proof}

Given these preliminary results, we may state the main result about realization we will use.

\begin{prop}
\label{prop:realrealizationofdegreemaps}
Suppose $n \geq 2$ is an integer. For any integer $j > 0$, the image of the homomorphism given by real realization
\[
\operatorname{im}(\bpi_{{n-1}+j\alpha}^{\aone}(S^{n-1 + n\alpha})(\real) \to \pi_{n-1}(S^{n-1})) = \begin{cases} (1) & \text{ if } j < n;\\ (2^{n-j}) & \text{ if } j \geq n \end{cases}.
\]
More precisely, for $j = n$ the map factors as $\mathbf{GW}(\real) \to \mathbf{W}(\real) \stackrel{sgn}{\to} \Z$.
\end{prop}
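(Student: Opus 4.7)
The plan is to identify $\bpi_{n-1+j\alpha}^{\aone}(S^{n-1+n\alpha})(\real)$ with $\K^{\MW}_{n-j}(\real)$ via Morel's computation recalled at the start of this section (the lemma citing \cite[Corollary 6.43]{MField}), and then compute the image of real realization from its action on generators as given by Proposition \ref{prop:realizationofelements}. Because ${\mathfrak R}$ is induced from a symmetric monoidal functor, it is multiplicative on the bigraded ring of endomorphisms of motivic spheres, so it suffices to evaluate ${\mathfrak R}$ on convenient generators of $\K^{\MW}_{n-j}(\real)$ in each range of $j$.

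In the case $j < n$ the target $\K^{\MW}_{n-j}(\real)$ lives in positive degree $n-j \geq 1$ and contains the symbol $[-1]^{n-j}$. Iterated application of Proposition \ref{prop:realizationofelements}(i) gives ${\mathfrak R}([-1]^{n-j}) = 1^{n-j} = 1$, so the image contains $1$ and equals $\Z = (1)$, as claimed.

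When $j = n$, the source is $\K^{\MW}_0(\real) = \mathbf{GW}(\real)$. By Proposition \ref{prop:realizationofelements}(ii), ${\mathfrak R}(\langle 1 \rangle) + {\mathfrak R}(\langle -1 \rangle) = 1 + (-1) = 0$, so ${\mathfrak R}$ annihilates the hyperbolic plane $h = \langle 1 \rangle + \langle -1 \rangle$ and therefore factors through the quotient $\mathbf{W}(\real) = \mathbf{GW}(\real)/(h)$. The induced map sends $\langle a \rangle$ to $+1$ or $-1$ according to the sign of $a$, which is precisely the signature $\mathbf{W}(\real) \stackrel{sgn}{\to} \Z$; in particular ${\mathfrak R}$ is surjective, and the factorization in the last sentence of the proposition is established.

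In the case $j > n$, Morel's computation identifies $\K^{\MW}_{n-j}(\real)$ with $\mathbf{W}(\real)$ via iterated multiplication by $\eta$, and in particular the iterated multiplication $\eta^{j-n} \colon \K^{\MW}_0(\real) \to \K^{\MW}_{n-j}(\real)$ is surjective, being the quotient $\mathbf{GW} \to \mathbf{W}$ followed by the $\eta$-induced isomorphisms in strictly negative Milnor--Witt degrees. Every element of $\K^{\MW}_{n-j}(\real)$ is therefore of the form $\eta^{j-n} \cdot g$ with $g \in \mathbf{GW}(\real)$, and multiplicativity of ${\mathfrak R}$ together with Proposition \ref{prop:realizationofelements}(iii) yields ${\mathfrak R}(\eta^{j-n} \cdot g) = (-2)^{j-n}\, {\mathfrak R}(g)$. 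Combined with the surjectivity of ${\mathfrak R}\colon \mathbf{GW}(\real) \to \Z$ established in the previous case, the image is exactly $(2^{j-n})\Z$. The principal obstacle I expect is the bookkeeping for non-positive Milnor--Witt degrees --- namely invoking Morel's structural identification $\K^{\MW}_{-m}(\real) = \mathbf{W}(\real)$ and the surjectivity of multiplication by $\eta^m$, together with verifying that ${\mathfrak R}$ genuinely respects the composition product on the bigraded ring of sphere endomorphisms rather than being merely additive --- but both ingredients are already contained in Morel's work and in the discussion preceding Lemma \ref{lem:realrealizationdegree00}.
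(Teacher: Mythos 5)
Your proposal is correct and follows essentially the same route as the paper's proof: identify the source with $\K^{\MW}_{n-j}(\real)$ via Morel's computation, then evaluate the ring homomorphism ${\mathfrak R}$ on generators in each of the three ranges using Proposition \ref{prop:realizationofelements} and multiplicativity. Your treatment of $j = n$ is a bit more explicit (you derive the factorization through $\W(\real)$ from the vanishing of ${\mathfrak R}$ on the hyperbolic form rather than simply citing Lemma \ref{lem:realrealizationdegree00}), and for $j > n$ you write a general element as $\eta^{j-n}\cdot g$ rather than observing directly that $\K^{\MW}_{n-j}(\real)$ is free of rank $1$ on $\eta^{j-n}$, but these are expository variations, not a different method. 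One point worth recording: your conclusion $(2^{j-n})$ in the range $j \geq n$ is the correct one; the $(2^{n-j})$ appearing in the proposition as stated is a typo (as confirmed both by the paper's own proof, which produces $(-2)^{j-n}$ from ${\mathfrak R}(\eta^{j-n})$, and by the downstream Proposition \ref{prop:realrealizationofpi3a3minus0}, which records $(2^{j-5})$).
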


\begin{proof}
In every case, the source group is $\K^{\MW}_{n-j}(\real)$. If $j < n$, this group contains the $(n-j)$-fold product $[-1] \cdots [-1]$ and the result follows immediately from Proposition \ref{prop:realizationofelements}(1) and compatibility of real realization with the product structures described above.

If $j = n$, the result follows from Lemma \ref{lem:realrealizationdegree00}. If $j > n$, then the source of the map is the free abelian group generated by $\eta^{j-n}$ and the result follows from Proposition \ref{prop:realizationofelements}(3).
\end{proof}

\subsubsection*{\'Etale realization}
We refer the reader to \cite{Isaksen} for a detailed discussion of \'etale realization. Let us quickly summarize the
main points. If $\ell$ is prime, one begins by defining an $\ell$-complete \'etale realization functor on the category of schemes: given a scheme $X$, its \'etale realization is an $\ell$-complete pro-simplicial set that we will denote by $\Et(X)$. The construction has the property that a morphism of schemes $f\colon X\to Y$ induces a weak equivalence $\Et(X)\to\Et(Y)$ if and only if $f^*\colon H^*_{\et}(Y;\Z/(\ell))\to H^*_{\et}(X;\Z/(\ell))$ is an isomorphism. By \cite{Isaksen}, if $k$ is a field and $\ell$ is different from the characteristic of $k$, the assignment $X \mapsto \Et(X)$ on smooth $k$-schemes extends to a functor on the pointed $\aone$-homotopy category $\hop{k}$; abusing notation slightly, we will also denote this functor by $\Et$. If $k$ is furthermore separably closed, it follows from the K\"unneth isomorphism in \'etale cohomology with $\Z/(\ell)$-coefficients that the functor $\Et$ preserves finite products and smash products of pointed spaces. Moreover, by construction, the functor $\Et$ commutes with the formation of homotopy colimits.

Assume now that $k$ is separably closed and write $R$ for the ring of Witt vectors in $k$. Choose an algebraically closed field $K$ and fix embeddings $R \hookrightarrow K$ and $\cplx \hookrightarrow K$. For any split reductive group $G$, these morphism yield maps of the form:
\[
G_k \longrightarrow G_R \longleftarrow G_K \longrightarrow G_{\cplx}
\]
which we will use to compare the \'etale realization over $k$ with complex realization.

\begin{lem}
\label{lem:etalerealizationofspheres}
If $k$ is a separably closed field having characteristic $p$ and $\ell$ is a prime different from $p$, then for any integers $i,j \geq 0$,
\[
\Et(S^{i+j\alpha}) \cong (S^{i+j})^{\wedge}_{\ell}.
\]
\end{lem}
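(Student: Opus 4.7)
The plan is to reduce the assertion to the single computation $\Et(\Gm_k) \weq (S^1)^{\wedge}_{\ell}$ and then combine with a smash-product decomposition.

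First I would note that, by construction, $S^{i+j\alpha}$ is $\aone$-weakly equivalent to the pointed smash product $S^i \sma \Gm^{\sma j}$, where $S^i$ denotes the constant simplicial presheaf on the simplicial $i$-sphere. Since $k$ is separably closed, $\Et$ preserves finite smash products and sends the simplicial sphere $S^i$ to $(S^i)^{\wedge}_{\ell}$, so we get a weak equivalence
\[
\Et(S^{i+j\alpha}) \weq (S^i)^{\wedge}_{\ell} \sma \Et(\Gm)^{\sma j}.
\]
Thus everything reduces to identifying $\Et(\Gm)$ with $(S^1)^{\wedge}_{\ell}$, after which one invokes the standard fact that a smash product of $\ell$-completed spheres of finite type is the $\ell$-completion of the corresponding sphere, yielding $(S^{i+j})^{\wedge}_{\ell}$.

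To compute $\Et(\Gm_k)$, I would use the comparison chain introduced just above the statement, namely
\[
\Gm_k \longrightarrow \Gm_R \longleftarrow \Gm_K \longrightarrow \Gm_{\cplx},
\]
where $R$ is the ring of Witt vectors of $k$ and $K$ is an algebraically closed field receiving $R$ and $\cplx$. Smooth base change in \'etale cohomology with $\Z/\ell$-coefficients implies each arrow induces an isomorphism on $H^*_{\et}(-;\Z/\ell)$, hence a weak equivalence on $\Et$ by the defining property recalled in the excerpt. Combining with the Artin comparison theorem between \'etale and singular cohomology with finite coefficients (or equivalently, Artin--Mazur's \'etale homotopy comparison theorem over $\cplx$), we get $\Et(\Gm_{\cplx}) \weq (\cplx^{\times})^{\wedge}_{\ell}$. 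Since $\cplx^{\times}$ is homotopy equivalent to $S^1$, we conclude that $\Et(\Gm_k) \weq (S^1)^{\wedge}_{\ell}$.

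Assembling these ingredients gives the required equivalence
\[
\Et(S^{i+j\alpha}) \weq (S^{i+j})^{\wedge}_{\ell}.
\]
The main obstacle I expect is bookkeeping the interaction between $\ell$-completion and smash products for pro-objects: one wants that $(X^{\wedge}_{\ell}) \sma (Y^{\wedge}_{\ell}) \weq (X \sma Y)^{\wedge}_{\ell}$, which holds here because all the spaces in sight have finite $\Z/\ell$-cohomology in each degree, so that $\ell$-completion is well-behaved on smash products. A secondary, purely formal point is making sure that the (spectrum-level) smash product $S^i \sma \Gm^{\sma j}$ used as a model for $S^{i+j\alpha}$ is compatible with the functorial realization of \cite{Isaksen}; but this is built into the construction of $\Et$ via homotopy colimits of the simplicial (co)pieces.
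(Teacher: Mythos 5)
Your proposal is correct and follows essentially the same route as the paper: both reduce the computation to $\Et(\Gm)\weq (S^1)^{\wedge}_{\ell}$ via the comparison chain $\Gm_k \to \Gm_R \leftarrow \Gm_K \to \Gm_{\cplx}$ and then invoke compatibility of $\Et$ with smash products over a separably closed base. The one place where the paper is more explicit than you are is the identification $\Et(S^1_s) \cong (S^1)^{\wedge}_{\ell}$ for the \emph{simplicial} circle: you assert this directly, whereas the paper derives it from the facts that $\Et$ commutes with homotopy colimits and that $S^1_s$ is the homotopy pushout of $\ast \leftarrow S^0_k \rightarrow \ast$; you should include this small step rather than treating the value of $\Et$ on constant simplicial presheaves as given.
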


\begin{proof}
The comparison maps described before the statement applied to $G = \gm{}$ yield identifications $\Et(\gm{}) \cong (\gm{}(\cplx))^{\wedge}_{\ell} \cong (S^1)^{\wedge}_{\ell}$. The \'etale realization of the simplicial circle is also $(S^1)^{\wedge}_{\ell}$ since \'etale realization preserves homotopy colimits and the simplicial circle can be realized as a homotopy pushout of the diagram $\ast \leftarrow S^0_k \rightarrow \ast$. To conclude, we use the fact that \'etale realization preserves smash products of pointed spaces.
\end{proof}

\begin{rem}
In contrast to the case of real realization, over a separably closed field, the \'etale realization of the endomorphisms of the motivic sphere are determined wholly by the realization of the identity map and the realization of $\eta$.
\end{rem}

\subsection{Realization of some homotopy sheaves of spheres}
\label{ss:realrealizationhomotopysheaves}
The purpose of this section is to describe the behavior of various homotopy sheaves of spheres under real and \'etale realization. In particular, we analyze the $\real$-realization homomorphism $\bpi_{3+j\alpha}^{\aone}(S^{2+3\alpha})(\real) \to \pi_3(S^2)$ and correct \cite[Corollary 5.4.1]{AsokFaselpi3a3minus0}. Along the way, we remind the reader of the computation of this $\aone$-homotopy sheaf, which will appear in subsequent sections.

\subsubsection*{The computation of $\bpi_{3+j\alpha}^{\aone}(S^{2+3\alpha})$ revisited}
We begin by recalling some results about $\aone$-fiber sequences from \cite[\S 4.2]{AsokFaselpi3a3minus0}. First, for any integer $n \geq 1$ there is a pull-back diagram of linear algebraic groups
\[
\xymatrix{
Sp_{2n} \ar[r]\ar[d] & SL_{2n+1} \ar[d] \\
Sp_{2n+2} \ar[r] & SL_{2n+2}.
}
\]
This diagram yields isomorphisms of $k$-schemes $SL_{2n+2}/Sp_{2n+2} \cong SL_{2n+1}/Sp_{2n}$. If we define $X_n = SL_{2n}/Sp_{2n}$, then by \cite[Proposition 4.2.2]{AsokFaselpi3a3minus0} we obtain $\aone$-fiber sequences of the form
\begin{equation}
\label{eq:1}
X_n \longrightarrow X_{n+1} \longrightarrow {\mathbb A}^{2n+1} \setminus 0.
\end{equation}
In the case $n=2$, there is an exceptional isomorphism $X_2 = SL_4/Sp_4 \cong SL_3/SL_2$ and thus $X_2$ is
$\aone$-weakly equivalent to ${\mathbb A}^3 \setminus 0$, yielding an $\aone$-fiber sequence
\[
 S^{2 +3\alpha} \weq {\mathbb A}^3 \setminus 0 \longrightarrow X_3 \longrightarrow {\mathbb A}^5 \setminus 0 \weq S^{4
 + 5 \alpha}.
\]

The schemes $X_n$ are symmetric varieties, and $X_{\infty} := \colim_{n \geq 0} X_n$ is a model for the $\aone$-connected component of the base-point in the space $GL/Sp$ arising in higher Grothendieck--Witt theory \cite{SchlichtingTripathi}. In particular, as observed in \cite[Proposition 4.2.2]{AsokFaselpi3a3minus0}, $\bpi_i^{\aone}(X_n) \cong \mathbf{GW}^3_{i+1}$ for $i \leq 2n-2$, at least if we work over a field having characteristic unequal to $2$. Using the long exact sequence in $\aone$-homotopy sheaves associated with the above fiber sequence, there is an associated exact sequence of the form
\[
\mathbf{GW}^3_5 \longrightarrow \K^{MW}_5 \longrightarrow \bpi_{3}^{\aone}(S^{2+3\alpha}) \longrightarrow \mathbf{GW}^3_4 \longrightarrow 0.
\]
The cokernel of the morphism $\mathbf{GW}^3_5 \to \K^{MW}_5$ is called $\mathbf{F}_5$ in \cite[Theorem 4.3.1]{AsokFaselpi3a3minus0}, and it is observed that $\mathbf{F}_5$ is a quotient of the fiber product $\mathbf{S}_5 \times_{\K^{\mathrm{M}}_5/2} \mathbf{I}^5 =: \mathbf{T}_5$ from \cite[Theorem 3.14]{AsokFaselSpheres}.

\subsubsection*{Behavior of $\bpi_{3+j\alpha}^{\aone}(S^{2+3\alpha})$ under realization}
There are natural homeomorphisms $X_n(\real) \approx SL_{2n}(\real)/Sp_{2n}(\real)$. Replacing the groups by their homotopy equivalent maximal compact subgroups, we obtain weak equivalences of the form
\[
X_n(\real) \weq SO(2n)/U(n).
\]
Furthermore, the fiber sequence \eqref{eq:1} corresponds under $\real$-realization to the fiber sequence
\begin{equation}
 \label{eq:2}
 X_n(\real) \longrightarrow X_{n+1}(\real) \longrightarrow S^{2n}.
\end{equation}
While real realization need not necessarily preserve fiber sequences, it follows from this observation that the real realization of $X_n \to X_{n+1} \to {\mathbb A}^{2n+1}\setminus 0$ is sent to a fiber sequence by real realization.

In a range depending on $n$, the homotopy groups of $X_{n+1}(\real)$ may be computed by means of real Bott periodicity. For instance, the stabilization map
\[
\pi_3(X_3(\real)) = \pi_3(SO(6)/U(3)) \longrightarrow \pi_3(SO/U) = \pi_3(O/U) = 0
\]
is an isomorphism. We therefore obtain, from a portion of the long exact homotopy sequence of \eqref{eq:2}, a
presentation
\begin{equation*}
 \pi_4(X_3(\real)) \to \pi_4(S^4) \to \pi_3(S^2) \to 0.
\end{equation*}
Since $\pi_4(S^4) \cong \pi_3(S^2) \cong \Z$, we conclude that the map $\pi_4(S^4) \to \pi_3(S^2)$ is an isomorphism.

% Moreover, a generator of $\pi_3(S^2)$ is provided by the composite map
% \[
% \delta(\real): S^3 \longrightarrow \Omega^1 S^4 \longrightarrow S^2
% \]
% where the first map is the unit map of the loop-suspension adjunction, and the second map is the action map in the fiber sequence.

Let $j \ge 0$. One obtains a ladder diagram from the long exact homotopy sequences of \eqref{eq:1} and \eqref{eq:2} and
the $\real$ realization map ${\mathfrak R}$:
\begin{equation}
 \label{eq:3}
 \xymatrix{ \ar[r] & \bpia_{4+j\alpha}(S^{4+5\alpha})(\real) \ar^f[r] \ar^{\rho'}[d] & \bpia_{3+j\alpha}(S^{2+3\alpha})(\real) \ar[r] \ar^{\rho}[d] &
 \bpia_{3+j\alpha}(X_3)(\real) \ar[r] \ar[d] & 0 \\ \ar[r] & \pi_4(S^4) \ar^{\iso}[r] & \pi_3(S^2) \ar[r] & 0 \ar[r] &
 0. }
\end{equation}
According to \cite[Proposition 5.2.1]{AsokFaselpi3a3minus0}, $f$ is surjective when $j \ge 5$, and is an isomorphism
when $j \ge 6$. The next result describes the image of the middle map and corrects \cite[Corollary 5.4.1]{AsokFaselpi3a3minus0}.

\begin{prop}
\label{prop:realrealizationofpi3a3minus0}
For any integer $j \geq 0$, the image of the homomorphism given by $\real$-realization satisfies
\[
\operatorname{im}(\bpi_{3+j\alpha}^{\aone}(S^{2+3\alpha})(\real) \to \pi_3(S^2)) = \begin{cases} (1) & \text{ if } j < 5 \\ (2^{j-5}) & \text{ if } j \geq 5.\end{cases}
\]
\end{prop}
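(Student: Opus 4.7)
The plan is to determine $\mathrm{image}(\rho)$ by propagating information along the commutative ladder \eqref{eq:3} starting from the already-analyzed map $\rho'$. Three ingredients are needed: (a) the bottom map $g\colon\pi_4(S^4)\to\pi_3(S^2)$ is an isomorphism, established via real Bott periodicity as recalled above the ladder; (b) by \cite[Proposition 5.2.1]{AsokFaselpi3a3minus0}, the top map $f$ is surjective for $j\ge 5$; and (c) under the identification $\bpia_{4+j\alpha}(S^{4+5\alpha})(\real)\cong\K^{MW}_{5-j}(\real)$ of \cite[Corollary 6.43]{MField}, the image of $\rho'$ is described by Proposition \ref{prop:realrealizationofdegreemaps} (applied with $n=5$) together with the explicit realization formulas of Proposition \ref{prop:realizationofelements}.

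For $j\ge 5$, the surjectivity of $f$ combined with commutativity of the left-hand square forces $\mathrm{image}(\rho)=g(\mathrm{image}(\rho'))$, so it suffices to compute the latter. At $j=5$ the map $\rho'$ is the real realization $\mathbf{GW}(\real)\to\Z$, which by Proposition \ref{prop:realrealizationofdegreemaps} factors as $\mathbf{GW}(\real)\to\mathbf{W}(\real)\xrightarrow{\mathrm{sgn}}\Z$; since the signature of $\langle 1\rangle$ is $1$, the image is $\Z=(2^0)$. For $j>5$, the source $\K^{MW}_{5-j}(\real)$ is generated by $\eta^{j-5}$ acting on an element realizing to $\pm 1$, so by Proposition \ref{prop:realizationofelements}(3), each factor of $\eta$ contributes a factor of $-2$ to realization; hence $\mathrm{image}(\rho') = (2^{j-5})$, and consequently $\mathrm{image}(\rho)=(2^{j-5})$.

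For $j<5$, surjectivity of $f$ is not required. By Proposition \ref{prop:realizationofelements}(1), the element $[-1]^{5-j}\in\K^{MW}_{5-j}(\real)$ realizes to $1\in\Z=\pi_4(S^4)$, so $\mathrm{image}(\rho')=\Z$. Commutativity of the left-hand square gives $g(\mathrm{image}(\rho'))\subseteq\mathrm{image}(\rho)\subseteq\Z$, and since $g$ is an isomorphism the left-hand side is all of $\Z$; thus $\mathrm{image}(\rho)=(1)$.

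The argument is essentially a diagram chase once the three ingredients are assembled, so the only real technical point is the compatibility of the multiplicative structure on $\bigoplus_{q,r}[S^{p+q\alpha},S^{p+r\alpha}]_{\aone}$ with the identification of $\bpia_{4+j\alpha}(S^{4+5\alpha})(\real)$ with $\K^{MW}_{5-j}(\real)$, which ensures that the explicit generators $[-1]$ and $\eta$ realize as computed in Proposition \ref{prop:realizationofelements} and can be propagated through the ladder.
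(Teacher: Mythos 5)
Your proposal is correct and follows essentially the same route as the paper, namely chasing the commutative ladder \eqref{eq:3} with the input on $\operatorname{im}(\rho')$ from Proposition \ref{prop:realrealizationofdegreemaps} (implicitly reading $(2^{j-n})$ rather than the typo $(2^{n-j})$ there). You merely spell out the diagram chase that the paper leaves as ``immediate,'' correctly noting that surjectivity of $f$ is required to bound $\operatorname{im}(\rho)$ from above for $j\ge 5$, while for $j<5$ the lower bound $g(\operatorname{im}(\rho'))=\Z$ already suffices.
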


\begin{proof}
  In view of the commutativity of Diagram \ref{eq:3}, and the observation that $f$ is an isomorphism when $f \ge 6$,
  this result is an immediate consequence of the corresponding fact for $\rho'$, which is contained in Proposition
  \ref{prop:realrealizationofdegreemaps}.
\end{proof}

\begin{rem}
  We remind the reader that $S^{2+3 \alpha}$ is $\aone$-weakly equivalent to $\mathbb{A}^3 \setminus \{0\}$.  It is
  known that $\pi_{3+5\alpha}(S^{2 + 3 \alpha}) \cong \Z/(24) \times_{\Z/(2)} \mathbf{W}$ as a $\mathbf{GW}$-module
  \cite[Proposition 5.2.1]{AsokFaselpi3a3minus0}, but this will not be needed in the sequel.
\end{rem}

\subsection{The EHP sequence and homotopy sheaves of $S^{3+3\alpha}$}
The sheaf $\bpi_{4+j\alpha}^{\aone}(S^{3+3\alpha})$ is computed from $\bpi_{3+j\alpha}^{\aone}(S^{2+3\alpha})$ by appeal to the simplicial EHP sequence of \cite[Theorem 3.3.13]{AWW}. In particular, for any base field $k$, there is a short exact sequence of the form
\begin{equation}
\label{eqn:ehps23}
\bpi_{5+j\alpha}^{\aone}(S^{5+6\alpha}) \stackrel{\mathrm{P}_k}{\longrightarrow} \bpi_{3+j\alpha}^{\aone}(S^{2+3\alpha}) \stackrel{\mathrm{E}_k}{\longrightarrow} \bpi_{4+j\alpha}^{\aone}(S^{3+3\alpha}) \longrightarrow 0,
\end{equation}
where the morphism $\mathrm{P}$ is induced by composition with the Whitehead square of the identity. By construction, this sequence is natural in $k$. In \cite[Theorem 5.2.5]{AWW}, after some preliminary results, the image of $\mathrm{P}_k$ in Diagram \ref{eqn:ehps23} is identified; our goal is to perform a similar analysis of the EHP sequence here.

By \cite[Corollary 6.43]{MField}, $\bpi_{5+j\alpha}^{\aone}(S^{5+6\alpha}) \cong \K^{\mathrm{MW}}_{6-j}$. On the other hand, the description of $\bpi_{3+j\alpha}^{\aone}(S^{2+3\alpha})$ for large $j$ follows from \cite[Lemma 5.1.1]{AsokFaselpi3a3minus0}: $\bpi_{3+j\alpha}^{\aone}(S^{2+3\alpha}) \cong \mathbf{I}^{6-j}$ for $j \geq 6$ and the latter sheaf coincides with the sheaf $\mathbf{W}$. Granted these two facts, the EHP exact sequence above takes the form:
\begin{equation}
 \label{eq:5}
 \xymatrix{ \bpia_{5+6\alpha} (S^{5 + 6 \alpha}) \ar@{=}[d] \ar^{\mathrm{P}_k}[r] & \bpia_{3+6\alpha} (S^{2 + 3 \alpha}) \ar@{=}[d] \ar^{\mathrm{E}_k}[r] & \bpia_{4+6\alpha} (S^{3 + 3\alpha})
 \ar[r] & 0 \\ \K^{\mathrm{MW}}_0 & \W. }
\end{equation}

\subsubsection*{On the map $\mathrm{P}_k$}
There are identifications $\hom_{\K^{\mathrm{MW}}_0}(\K^{\mathrm{MW}}_0,\W) \cong \mathbf{W}(k) \cong \hom_{\GW(k)}(\GW(k),\W(k))$. Via these identifications, the map $\mathrm{P}_k$ of \eqref{eq:5} determines an element $[\mathrm{P}_k]$ of $\W(k)$. The next result gives some conditions equivalent to the surjectivity of $\mathrm{P}_k$.

\begin{lem}
\label{lem:equivalentconditionsforPsurjective}
The following are equivalent:
\begin{enumerate}[noitemsep,topsep=1pt]
 \item The map $\mathrm{P}_k$ is surjective;
 \item The element $[\mathrm{P}_k]$ generates $\W(k)$ as a $\GW(k)$ module;
 \item The element $[\mathrm{P}_k]$ is a unit of $\W(k)$;
 \item The map $\mathrm{P}_k(k)$ is surjective.
\end{enumerate}
\end{lem}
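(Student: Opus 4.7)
The plan is to identify $\mathrm{P}_k$ with multiplication by the single element $[\mathrm{P}_k]\in \W(k)$ and then translate each of the four conditions into a statement about this element in the ring $\W(k)$. The source $\K^{MW}_0$ and target $\W$ of $\mathrm{P}_k$ are both $\K^{MW}_0$-modules, and $\mathrm{P}_k$ is $\K^{MW}_0$-linear since it arises from composition with a fixed motivic Whitehead product class, which commutes with the $\gm{}$-suspension operation that generates the $\K^{MW}_0$-action. Under the isomorphism $\hom_{\K^{MW}_0}(\K^{MW}_0,\W)\cong \W(k)$ recorded just before the lemma, the map $\mathrm{P}_k$ corresponds to $[\mathrm{P}_k] := \mathrm{P}_k(1)\in \W(k)$, and on sections over any essentially smooth local $k$-algebra $R$ it is the composite
\[
\GW(R) = \K^{MW}_0(R) \twoheadrightarrow \W(R) \stackrel{\cdot\,\operatorname{res}_{R/k}([\mathrm{P}_k])}{\longrightarrow} \W(R).
\]

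For the equivalence (2) $\Leftrightarrow$ (3), the key observation is that the $\GW$-action on $\W$ factors through the canonical surjection $\GW \twoheadrightarrow \W$. Hence the $\GW(k)$-submodule of $\W(k)$ generated by $[\mathrm{P}_k]$ coincides with the principal ideal $[\mathrm{P}_k]\cdot\W(k)$, and this ideal fills $\W(k)$ if and only if $[\mathrm{P}_k]$ is a unit in the ring $\W(k)$.

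I would close the remaining implications in the cycle $(3)\Rightarrow(1)\Rightarrow(4)\Rightarrow(3)$. If $[\mathrm{P}_k]$ is a unit of $\W(k)$, then its restriction is a unit of $\W(R)$ for every essentially smooth local $k$-algebra $R$, so multiplication by $\operatorname{res}([\mathrm{P}_k])$ is an automorphism of $\W(R)$; precomposing with the surjection $\GW(R)\twoheadrightarrow \W(R)$ shows that $\mathrm{P}_k$ is surjective on every Nisnevich stalk, yielding (1). The implication (1) $\Rightarrow$ (4) is formal: $\Spec k$ admits no nontrivial Nisnevich covers, so evaluating a surjective morphism of Nisnevich sheaves on $\Sm_k$ at $\Spec k$ produces a surjection of abelian groups. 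Finally, if $\mathrm{P}_k(k)\colon \GW(k)\to \W(k)$ is surjective, then $1\in \W(k)$ is of the form $x\cdot [\mathrm{P}_k]$ for some $x\in \GW(k)$; the image of $x$ in $\W(k)$ provides a multiplicative inverse for $[\mathrm{P}_k]$, giving (3).

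The only genuine content is the identification of $\mathrm{P}_k$ with multiplication by $[\mathrm{P}_k]$; after that, all equivalences are formal manipulations with principal ideals in $\W(k)$ and with the quotient $\GW \twoheadrightarrow \W$. I do not anticipate a serious obstacle, the main point to verify carefully being the $\K^{MW}_0$-linearity of $\mathrm{P}_k$, which is inherent in how $\mathrm{P}$ is constructed from the simplicial EHP sequence via $\gm{}$-suspension.
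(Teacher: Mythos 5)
Your proof is correct and matches what the paper intends, since the paper simply asserts the lemma ``follows by unwinding definitions'' and does not spell out the argument. The identifications you make explicit — that $\mathrm{P}_k$ is $\K^{MW}_0$-linear, hence determined by $[\mathrm{P}_k] = \mathrm{P}_k(1)\in \W(k)$, that the $\GW$-action on $\W$ factors through the quotient $\GW\twoheadrightarrow\W$ (so generating as a $\GW$-module is the same as generating the unit ideal in $\W$), and the cycle $(3)\Rightarrow(1)\Rightarrow(4)\Rightarrow(3)$ via restriction of units to stalks — are exactly the definitional unwinding being invoked.
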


\begin{proof}
The equivalence of the first and second statements follows from the fact that $\mathrm{P}_k$ is a morphism of $\K^{MW}_0$-modules in conjunction with the identifications 
\[
\hom_{\K^{\mathrm{MW}}_0}(\K^{\mathrm{MW}}_0,\W) \cong \mathbf{W}(k) \cong \hom_{\GW(k)}(\GW(k),\W(k))
\] 
mentioned above. The last identification also immediately implies the equivalence of the second and third statements. The equivalence of the second and fourth statements follows from evaluation on $k$.
\end{proof}

We now prove some descent and ascent results relating the maps $\mathrm{P}_k$ for different classes of fields.

\begin{lem}
\label{lem:descentascentforP}
The following statements hold:
\begin{enumerate}[noitemsep,topsep=1pt]
\item If $K/k$ is a field extension, and if $\mathrm{P}_k$ is surjective, then $\mathrm{P}_K$ is surjective as well.
\item If $k$ is not formally real, $k^s$ is a separable closure of $k$ and $\mathrm{P}_{k^s}$ is surjective, then $\mathrm{P}_k$ is surjective.
\item If $\mathrm{P}_{\real}$ is surjective, then $\mathrm{P}_{\Q}$ is surjective.
\end{enumerate}
\end{lem}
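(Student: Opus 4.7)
The plan is to translate surjectivity of $\mathrm{P}_k$ into the statement that $[\mathrm{P}_k] \in \W(k)$ is a unit via Lemma \ref{lem:equivalentconditionsforPsurjective}, and then to exploit naturality of the EHP sequence \eqref{eqn:ehps23} in $k$ to compare the elements $[\mathrm{P}_k]$ under the relevant base-change ring homomorphisms. Concretely, naturality implies that for any field extension $K/k$, the element $[\mathrm{P}_K]$ is the image of $[\mathrm{P}_k]$ under the ring map $\W(k) \to \W(K)$.

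Part (1) is then immediate: a ring homomorphism sends units to units, so $[\mathrm{P}_k]$ being a unit forces $[\mathrm{P}_K]$ to be one as well. For Part (2), one invokes Pfister's theorem that if $k$ is not formally real then the fundamental ideal $\mathbf{I}(k) \subset \W(k)$ is nilpotent; consequently $\W(k)$ is a local ring with unique maximal ideal $\mathbf{I}(k)$ and residue field $\mathbb{F}_2$ detected by the rank-mod-$2$ map. An element of $\W(k)$ is thus a unit precisely when it has odd rank, and since rank modulo $2$ is preserved under the base change $\W(k) \to \W(k^s)$, if $[\mathrm{P}_{k^s}]$ is a unit then $[\mathrm{P}_k]$ has odd rank and is itself a unit.

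For Part (3), the field $\Q$ is formally real, so $\W(\Q)$ is not local; however, $\Q$ admits exactly one ordering, and by the Harrison/Lorenz--Leicht classification of prime ideals of a Witt ring, the maximal ideals of $\W(\Q)$ are precisely $\mathbf{I}(\Q)$ together with $\sigma^{-1}(p\Z)$ for each odd prime $p$, where $\sigma \colon \W(\Q) \to \Z$ is the signature for the unique ordering. Since $\W(\real) \cong \Z$ via the signature, the base change $\W(\Q) \to \W(\real)$ coincides with $\sigma$. If $\mathrm{P}_\real$ is surjective, then $[\mathrm{P}_\real] \in \W(\real) \cong \Z$ is a unit, hence equal to $\pm 1$, and naturality gives $\sigma([\mathrm{P}_\Q]) = \pm 1$. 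This value is odd, so $[\mathrm{P}_\Q] \notin \mathbf{I}(\Q)$ (since signature and rank agree modulo $2$), and coprime to every odd prime, so $[\mathrm{P}_\Q]$ lies in no $\sigma^{-1}(p\Z)$. Thus $[\mathrm{P}_\Q]$ avoids every maximal ideal of $\W(\Q)$ and is consequently a unit.

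The main obstacle is Part (3): unlike (1) and (2), units in $\W(\Q)$ cannot be detected by a single rank-type invariant, so the argument must pass through the classification of maximal ideals of Witt rings, and it uses crucially that $\Q$ has exactly one ordering, leaving only one signature to verify.
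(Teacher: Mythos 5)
Your proof is correct and follows essentially the same route as the paper: all three parts are reduced, via Lemma \ref{lem:equivalentconditionsforPsurjective} and naturality of the EHP sequence (which identifies $[\mathrm{P}_K]$ as the image of $[\mathrm{P}_k]$ under $\W(k) \to \W(K)$), to a statement about units in Witt rings, which is then settled by Pfister-type structure results. The only difference is cosmetic: for Parts (2) and (3), the paper simply cites Lam's Theorem VIII.8.7, which characterizes units of $\W(F)$ as forms of odd rank whose total signature is $\pm 1$ at every ordering (and for Part (3) routes through the real closure $\RR^{\mathrm{alg}}$ of $\Q$), whereas you effectively reprove this characterization by working directly with the Harrison/Lorenz--Leicht description of $\Spec \W(F)$. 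One small imprecision: for $k$ not formally real, Pfister's result is that $\mathbf{I}(k)$ coincides with the nilradical of $\W(k)$ (so that $\W(k)$ is local with maximal ideal $\mathbf{I}(k)$ and residue field $\mathbb{F}_2$), not that the ideal $\mathbf{I}(k)$ is literally nilpotent as an ideal; but your conclusion, that units are exactly the odd-rank classes, is exactly what the argument needs and is correct.
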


\begin{proof}
In each case, we appeal to the equivalent conditions of Lemma \ref{lem:equivalentconditionsforPsurjective}: the map $\mathrm{P}_k$ is surjective if and only if the element $[\mathrm{P}_k]$ is a unit of $\W(k)$. The latter statement may be checked by appeal to a classical result of Pfister \cite[Theorem 8.7]{Lam}.

For the first point, the definition of $\mathrm{P}_k$ is natural in $k$, which is to say that $[\mathrm{P}_K]$ is the image of $[\mathrm{P}_{k}]$ under the functorial ring map $\W(k) \to \W(K)$. Since the element $[\mathrm{P}_k]$ is a unit, so too is $[\mathrm{P}_K]$.

For the second point we know $\W(k^s) = \ZZ/(2)$, and $\W(k) \to \W(k^s)$ is the dimension map (modulo 2). By \cite[Theorem VIII.8.7]{Lam}, the class $[\mathrm{P}_k]\in \W(k)$ is a unit if and only if it remains a unit in $\W(k^s)$ under this map.

For the third point, begin by observing that the field $\Q$ is formally real and has a unique real closure, the field $\RR^{alg}$ of real algebraic numbers. By \cite[Theorem VIII.8.7]{Lam}; the class $[\mathrm{P}_\Q] \in \W(\Q)$ is a unit if and only if $[\mathrm{P}_{\RR^{alg}}] \in \W(\RR^{alg})$ is a unit. Moreover, a form $q$ is a unit if and only if its signature with respect to the unique ordering of $\RR^{alg}$ is $\pm 1$. Since this last statement can be checked by passing to $\RR$, we conclude.
\end{proof}

\subsubsection*{A vanishing result}
We are now in a position to strengthen \cite[Proposition 5.2.3]{AWW}.

\begin{prop}
\label{prop:pi6contrvanish}
If $k$ is a field having characteristic unequal to $2$, then $\bpi_{4+6\alpha}^{\aone}(S^{3+3\alpha})=0$.
\end{prop}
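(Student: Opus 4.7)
The plan is to translate the vanishing into a statement about units of the Witt group. By exactness of \eqref{eq:5} and Lemma \ref{lem:equivalentconditionsforPsurjective}, the desired vanishing is equivalent to $[\mathrm{P}_k] \in \W(k)$ being a unit. I will handle the characteristic zero and positive characteristic cases separately, using in each case the ascent and descent properties of Lemma \ref{lem:descentascentforP} to reduce the problem to a single base field, and then compare to classical topology via realization.

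For characteristic zero, every field contains $\Q$, so Lemma \ref{lem:descentascentforP}(1) and (3) reduce the question to showing $[\mathrm{P}_{\real}]$ is a unit in $\W(\real) \cong \Z$. I would apply real realization $\mathfrak{R}$ to \eqref{eq:5}: by functoriality this produces a commutative square whose bottom row identifies with the relevant portion of the classical EHP long exact sequence, namely the fragment $\pi_5(S^5) \to \pi_3(S^2) \to \pi_4(S^3)$. Since $\pi_4(S^3) = \Z/2$, the classical $P$-map is multiplication by $\pm 2$. Combining Lemma \ref{lem:realrealizationdegree00} (which computes $\mathfrak{R}$ on $\GW(\real)$) and Proposition \ref{prop:realrealizationofpi3a3minus0} with $j = 6$ (which identifies the image of $\mathfrak{R}$ on $\W(\real)$ as $2\Z$), a short diagram chase then forces the signature of $[\mathrm{P}_{\real}]$ to be $\pm 1$, as required.

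For positive characteristic $p > 2$, every field is not formally real, so Lemma \ref{lem:descentascentforP}(1) and (2) reduce the question to showing $[\mathrm{P}_K]$ is the nontrivial element of $\W(K) \cong \Z/2$ for a single separably closed field $K$ of characteristic $p$ (for instance an algebraic closure of $\mathbb{F}_p$). Here I would use $\ell$-adic \'etale realization at $\ell = 2 \neq p$: by Lemma \ref{lem:etalerealizationofspheres} this sends \eqref{eq:5} to the $2$-completion of the classical EHP fragment $\pi_{11}(S^{11}) \to \pi_9(S^5) \to \pi_{10}(S^6)$. Since $\pi_{10}(S^6) = \pi_4^s = 0$ by Freudenthal, the classical $P$-map is the surjection $\Z \to \Z/2$. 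Tracing $1 \in \GW(K)$ around the resulting commutative square, and noting that the left vertical realization sends $1$ to $1 \in \Z_2$, forces the right vertical realization $\W(K) \to \Z/2$ to carry $[\mathrm{P}_K]$ to the generator, so $[\mathrm{P}_K]$ is itself the generator of $\W(K)$.

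The technical point likely requiring the most care is the compatibility of the simplicial EHP sequence of \cite[Theorem 3.3.13]{AWW} with both real and \'etale realization; specifically, that the $\aone$-Whitehead square $[\iota,\iota]\colon S^{5+6\alpha} \to S^{3+3\alpha}$ realizes to the classical Whitehead square of the realized identity, so that the motivic $P$-map agrees under realization with its classical counterpart. This should follow from the geometric construction of the EHP sequence together with the symmetric monoidal properties of both realization functors and their preservation of the requisite (co)fiber sequences, but is the one compatibility that genuinely needs to be verified.
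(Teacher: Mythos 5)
Your proposal is correct and reproduces the paper's argument nearly step for step: the same reduction via Lemmas \ref{lem:equivalentconditionsforPsurjective} and \ref{lem:descentascentforP} to a statement about $[\mathrm{P}_k]$ being a unit over prime fields (what the paper splits into Lemmas \ref{lem:Qunit} and \ref{lem:scunit}), the same real-realization diagram chase against the classical fragment $\pi_5(S^5)\to\pi_3(S^2)\to\pi_4(S^3)$ combined with Proposition \ref{prop:realrealizationofpi3a3minus0} at $j=6$, and the same $\ell=2$ \'etale-realization comparison against $\pi_{11}(S^{11})\to\pi_9(S^5)\to\pi_{10}(S^6)$. Your closing caveat about compatibility of the simplicial EHP sequence with realization is exactly the point the paper also relies on without elaborating.
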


\begin{proof}
Contemplating the exact sequence in Diagram \eqref{eq:5}, by appeal to Lemmas \ref{lem:equivalentconditionsforPsurjective} and \ref{lem:descentascentforP}(1), it suffices to show that $[\mathrm{P}_k]$ is a unit for $k$ any prime field. We treat the characteristic zero case in Lemma \ref{lem:Qunit} and the positive characteristic case in Lemma \ref{lem:scunit}.
\end{proof}

\begin{lem}\label{lem:Qunit}
The element $[\mathrm{P}_\Q] \in \W(\Q)$ is a unit.
\end{lem}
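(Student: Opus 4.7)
By Lemma~\ref{lem:descentascentforP}(3), it suffices to show $[\mathrm{P}_{\RR}] \in \W(\RR)$ is a unit. Since $\W(\RR) \cong \Z$ via the signature, this reduces to verifying that $[\mathrm{P}_{\RR}] = \pm 1$. The plan is to apply real realization to the exact sequence \eqref{eq:5} for $k = \RR$ and exploit its compatibility with the classical topological EHP sequence
\[
\pi_5(S^5) \stackrel{\mathrm{P}_{\mathrm{top}}}{\longrightarrow} \pi_3(S^2) \longrightarrow \pi_4(S^3) \longrightarrow 0.
\]
Here $\mathrm{P}_{\mathrm{top}}$ is composition with the Whitehead square $[\iota_2,\iota_2] = 2\eta \in \pi_3(S^2)$, hence is multiplication by $\pm 2$ under the identifications $\pi_5(S^5) \cong \Z \cong \pi_3(S^2)$.

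Naturality of the EHP construction under real realization produces a commutative square
\[
\xymatrix{
\mathbf{GW}(\RR) \ar[r]^-{\mathrm{P}_{\RR}} \ar[d] & \W(\RR) \ar[d] \\
\pi_5(S^5) \ar[r]^-{\cdot (\pm 2)} & \pi_3(S^2).
}
\]
Proposition~\ref{prop:realrealizationofdegreemaps} applied with $n = j = 6$ identifies the left vertical map with the signature composite $\mathbf{GW}(\RR) \twoheadrightarrow \W(\RR) \stackrel{\mathrm{sgn}}{\longrightarrow} \Z$; in particular $\langle 1 \rangle \mapsto 1$. Proposition~\ref{prop:realrealizationofpi3a3minus0} applied with $j = 6$ shows that the right vertical map $\W(\RR) = \Z \to \pi_3(S^2) = \Z$ has image $2\Z$, so it is multiplication by $\pm 2$.

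Now $\mathrm{P}_{\RR}$ is $\mathbf{GW}(\RR)$-linear, so it is determined by its value $\mathrm{P}_{\RR}(\langle 1 \rangle) = [\mathrm{P}_{\RR}]$. Tracing $\langle 1 \rangle$ around the square yields the identity $\pm 2 \cdot [\mathrm{P}_{\RR}] = \pm 2 \cdot 1$ in $\Z$, forcing $[\mathrm{P}_{\RR}] = \pm 1$, as required. The only nonroutine input is the classical identity $[\iota_2,\iota_2] = 2\eta$, together with the compatibility of the simplicial EHP construction of \cite{AWW} used to define $\mathrm{P}_k$ with real realization; both are standard, so the argument is essentially a bookkeeping computation using the realization results established in the preceding subsection.
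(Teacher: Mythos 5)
Your proof is correct and follows essentially the same route as the paper: reduce to $\real$ via Lemma~\ref{lem:descentascentforP}(3), compare the real realization of the simplicial EHP sequence to the classical one, identify the left and right vertical maps via Lemma~\ref{lem:realrealizationdegree00}/Proposition~\ref{prop:realrealizationofdegreemaps} and Proposition~\ref{prop:realrealizationofpi3a3minus0}, and conclude from the identity $[\iota_2,\iota_2]=2\eta_{\text{top}}$ by a diagram chase. The only cosmetic difference is that you phrase the chase by tracing $\langle 1 \rangle$ around the square, whereas the paper compares images of the two composites; the ingredients are identical.
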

\begin{proof}
By Lemma \ref{lem:descentascentforP}(3), it is sufficient to show that the map $\mathrm{P}_\real(\real)$ in the following diagram, obtained by evaluating \eqref{eqn:ehps23}, is surjective:
 \[
\xymatrix{ \bpia_{5+6\alpha} (S^{5 + 6 \alpha})(\real) \ar@{=}[d] \ar^-{\mathrm{P}_\real(\real)}[r] & \bpia_{3} (S^{2 + 3 \alpha}) \ar@{=}[d] \ar^-{\mathrm{E}_\real(\real)}[r] & \bpia_{4} (S^{3 + 3\alpha})
 \ar^-{\mathrm{H}_\real(\real)}[r] & 0 \\ \Z[\epsilon]/(\epsilon^2 - 1) & \Z }
 \]
the equality on the left is contained in Lemma \ref{lem:realrealizationdegree00}.

Since $\real$-realization is functorial and the real realization of the relevant portion of the simplicial EHP sequence coincides with the corresponding portion of the EHP sequence of the real points, we obtain a commuting diagram of exact sequences
 \[
 \xymatrix{
 \bpia_{5+6\alpha}(S^{5+6\alpha})(\real) \ar[r]^-{\mathrm{P}}\ar[d] & \bpia_{3+6\alpha}(S^{2+3\alpha})(\real) \ar[r]^-{\mathrm{E}}\ar^-{\rho}[d] & \bpia_{4+6\alpha}(S^{3+3\alpha})(\real) \ar[r]\ar[d] & 0 \\
 \pi_5(S^5) \ar^{|\mathrm{P}|}[r] & \pi_3(S^2) \ar[r] & \pi_4(S^3) \ar[r] & 0.}
 \]
 It is well known that the lower sequence takes the form $\Z\iota_5 \to \Z\eta_{\text{top}} \to \Z/(2) \to 0$, and
 in particular that $|\mathrm{P}|(\iota_5) = 2 \eta_{\text{top}}$.

 The rest of the argument is a diagram chase. Write $\mathrm{P}_\real(1) = n$. We wish to show that $n = \pm 1$. The left hand vertical map is the surjection $\Z[\epsilon]/(\epsilon^2 - 1) \to \Z$ given by evaluation at $1$, again by Lemma \ref{lem:realrealizationdegree00}; it sends the class of the identity map to the class of the identity. Therefore, $2\Z\eta_{\text{top}}$ is the image of the composite map
 $\bpia_{5+6\alpha}(S^{5+6\alpha})(\real) \to \pi_3(S^2)$.

 We showed in Proposition \ref{prop:realrealizationofpi3a3minus0} that the image of the map $\rho$ is $2 \Z \eta_{\text{top}}$. Following the left hand square in the clockwise direction, we see that the image of the composite map
 $\bpia_{5+6\alpha}(S^{5+6\alpha})(\real) \to \pi_3(S^2)$ is $2 n \ZZ \eta_{\text{top}}$. Consequently, $n = \pm 1$ as required.
\end{proof}

\begin{lem}
\label{lem:scunit}
If $k$ is a finite field having characteristic unequal to $2$, then $[\mathrm{P}_k]$ is a unit.
\end{lem}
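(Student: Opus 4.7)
The plan is to reduce to the separable closure via Lemma~\ref{lem:descentascentforP}(2) and then compare $[\mathrm{P}_{k^s}]$ with the classical Whitehead square via $2$-adic \'etale realization. Since any finite field is not formally real, it suffices to show that $[\mathrm{P}_{k^s}]$ is a unit, where $k^s=\bar k$. Because $k^s$ is algebraically closed of characteristic $p\ne 2$, one has $\K^{MW}_0(k^s)=\GW(k^s)=\Z$ and $\W(k^s)=\Z/2$, so the task reduces to verifying $[\mathrm{P}_{k^s}]\ne 0$.

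Since $p\ne 2$, the $2$-adic \'etale realization functor $\mathrm{Et}$ of \S\ref{ss:realization} applies, and by Lemma~\ref{lem:etalerealizationofspheres} it sends $S^{2+3\alpha}$ to $(S^5)^{\wedge}_2$ and $S^{5+6\alpha}$ to $(S^{11})^{\wedge}_2$. The simplicial EHP sequence of \cite[Theorem 3.3.13]{AWW} is built from the James--Hopf decomposition of $\Omega\Sigma X$, and the Witt-vector construction in the proof of Lemma~\ref{lem:etalerealizationofspheres} (embedding $W(k^s)$ and $\cplx$ into a common algebraically closed field) lets us identify the $\mathrm{Et}$-realization of the $\aone$-EHP sequence with the $2$-completion of the classical topological EHP sequence for the triple $(S^5,S^6,S^{11})$. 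In particular, $\mathrm{P}_{k^s}$ realizes to $P^{\wedge}_2\colon\pi_{11}(S^{11})^{\wedge}_2=\Z_2\longrightarrow\pi_9(S^5)^{\wedge}_2=\Z/2$.

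A classical computation yields $P(\iota_{11})=[\iota_5,\iota_5]$, which generates $\pi_9(S^5)=\Z/2$ since $\pi_{10}(S^6)=\pi^s_4=0$; consequently $P^{\wedge}_2$ is the standard surjection. Chasing $1\in\Z$ around the commutative square
\[
\xymatrix{
\K^{MW}_0(k^s)=\Z \ar[r]^-{\mathrm{P}_{k^s}}\ar[d] & \W(k^s)=\Z/2 \ar[d] \\
\Z_2 \ar[r]^-{P^{\wedge}_2} & \Z/2,
}
\]
in which the left vertical sends $1\mapsto 1$ (the realization of the identity is the identity), forces the right vertical to send $[\mathrm{P}_{k^s}]$ to the nonzero class of $\Z/2$; in particular $[\mathrm{P}_{k^s}]\ne 0$, which is the required unit.

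The main technical hurdle is the middle step: establishing that the $\mathrm{Et}$-realization of the simplicial $\aone$-EHP sequence of \cite{AWW} coincides with the $2$-completion of the classical topological EHP sequence. This rests on the facts that $\mathrm{Et}$ commutes with the homotopy-colimit constructions building the James--Hopf map and that the Witt-vector device provides a zig-zag to complex realization which in turn agrees, after $2$-completion, with the classical topological realization; both ingredients are known in principle, but the identification must be spelled out carefully.
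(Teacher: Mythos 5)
Your proof takes essentially the same route as the paper: reduce to $k^s$ via Lemma~\ref{lem:descentascentforP}(2), use the $\ell=2$ \'etale realization of Lemma~\ref{lem:etalerealizationofspheres} to map the motivic EHP sequence onto the $2$-completion of the classical EHP sequence for $(S^5,S^6,S^{11})$, observe that $\pi_{10}(S^6)=\pi_4^s=0$ forces the classical $P$-map to surject onto $\pi_9(S^5)=\Z/2$, note that the identity lies in the image of the left vertical map, and conclude by a diagram chase. The only cosmetic difference is that you invoke the explicit identity $P(\iota_{11})=[\iota_5,\iota_5]$ generating $\pi_9(S^5)$, whereas the paper simply uses exactness together with $\pi_{10}(S^6)=0$; these amount to the same thing. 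You flag, as a ``technical hurdle,'' the identification of the \'etale realization of the motivic EHP sequence with the $2$-completion of the topological one --- the paper treats this as following from Lemma~\ref{lem:etalerealizationofspheres} and functoriality, so you are being appropriately more cautious about a step the paper elides, but it does not constitute a genuinely different approach.
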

\begin{proof}
By appeal to Lemma \ref{lem:descentascentforP}(2), it is sufficient to show that the map $\mathrm{P}_{k^s}(k^s)$ is surjective.
\begin{equation}
\label{eq:5b}
\xymatrix{ \bpia_{5+6\alpha} (S^{5 + 6 \alpha})(k^s) \ar@{=}[d] \ar^-{\mathrm{P}_{k^s}(k^s)}[r] & \bpia_{3+6\alpha} (S^{2 + 3 \alpha})(k^s) \ar@{=}[d] \ar^-{\mathrm{E}_{k^s}(k^s)}[r] & \bpia_{4+6\alpha} (S^{3 + 3\alpha})(k^s)
 \ar[r] & 0 \\ \ZZ & \ZZ/ (2). }
\end{equation}
Take $\ell = 2$. By appeal to Lemma \ref{lem:etalerealizationofspheres} and functoriality of \'etale realization, we obtain a commutative diagram of the form:
 \[
 \xymatrix{
 \bpi_{5+6\alpha}^{\aone}(S^{5+6\alpha})(k^s) \ar[r]^-{\mathrm{P}_{k^s}(k^s)}\ar[d] & \bpi_{3+6\alpha}^{\aone}(S^{2+3\alpha})(k^s) \ar[r]^-{\mathrm{E}_{k^s}(k^s)}\ar^{\rho}[d] & \bpi_{4+6\alpha}^{\aone}(S^{3+3\alpha})(k^s) \ar[r]\ar[d] & 0 \\
 \pi_{11}(S^{11})^{\wedge}_2 \ar^{|\mathrm{P}|}[r] & \pi_9(S^5)^{\wedge}_{2} \ar[r] & \pi_{10}(S^6)^{\wedge}_2 \ar[r] & 0.}
 \]
The sequence $\pi_{11}(S^{11}) \to \pi_9(S^5) \to \pi_{10}(S^6) \to 0$ is a portion of the classical EHP exact sequence, and one knows that $\pi_{11}(S^{11}) = \Z$, $\pi_9(S^5) =\Z/(2)$ and $\pi_{10}(S^6) = 0$ and thus this sequence remains exact after $2$-completion, i.e., the diagram above is a commutative diagram of exact sequences. Therefore, we conclude that $|\mathrm{P}|$ is necessarily surjective. Indeed, the identity map lies in the image of the leftmost vertical arrow, and so the composite map $\Z/2 \iso \bpia_{5+6\alpha}(S^{5+6\alpha})(k^s) \to \pi_9(S^5)^{\wedge}_2 \iso \ZZ/(2)$ is surjective, whence it is also an isomorphism. It follows immediately that $\mathrm{P}_{k^s}(k^s)$ is surjective.
\end{proof}

\subsubsection*{Some results on strictly $\aone$-invariant sheaves}
The vanishing result of Proposition \ref{prop:pi6contrvanish} can be used to deduce information about the sheaves $\bpi_{4+j\alpha}^{\aone}(S^{3+3\alpha})$ for $j < 6$ by appeal to some technical results on strongly $\aone$-invariant sheaves that were established in \cite[\S 5.1]{AWW}. We restate the necessary results here for the convenience of the reader; in what follows $\hom$ is taken in the category of Nisnevich sheaves of abelian groups.

\begin{lem}[{\cite[Lemma 5.1.3]{AWW}}]
\label{lem:homfromkmwiscontraction}
Suppose $\mathbf{M}$ is a strictly $\aone$-invariant sheaf.
\begin{enumerate}[noitemsep,topsep=1pt]
\item For any integer $n \geq 1$, there are isomorphisms
\[
\hom(\K^{\MW}_n,\mathbf{M}) \iso \mathbf{M}_{-n}(k).
\]
\item If $n \geq 2$, the evident map $\hom(\K^{\MW}_n,\mathbf{M}) \to \hom(\K^{\MW}_{n-1},\mathbf{M}_{-1})$ induced by contraction is an isomorphism compatible with the identification of \textup{Point (1)}.
\end{enumerate}
\end{lem}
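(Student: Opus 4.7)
My approach hinges on a universal property of $\K^{\MW}_n$ due to Morel: in the abelian category of strictly $\aone$-invariant Nisnevich sheaves on $\mathrm{Sm}_k$, the sheaf $\K^{\MW}_n$ is the ``free" object generated by the pointed sheaf $\gm^{\sma n}$. Concretely, there is a tautological $n$-fold symbol $[t_1,\ldots,t_n] \in \K^{\MW}_n(\gm^{\times n})$ which vanishes upon restriction along any unit section $t_i = 1$, and a morphism $\phi \colon \K^{\MW}_n \to \mathbf{M}$ into any strictly $\aone$-invariant sheaf $\mathbf{M}$ is uniquely determined by the image $\phi([t_1,\ldots,t_n]) \in \mathbf{M}(\gm^{\times n})$. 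The relations defining $\K^{\MW}_n$ (multilinearity, the Steinberg relation, and the twist by $\eta$) are precisely those which any such element pulled from a strictly $\aone$-invariant sheaf automatically satisfies.

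For Point (1), I would proceed by induction on $n$. The base case $n=1$ is a direct application of the universal property above: a morphism $\phi \colon \K^{\MW}_1 \to \mathbf{M}$ is determined by the element $\phi([t]) \in \mathbf{M}(\gm)$, and the relation $[1] = 0$ forces this element to lie in the kernel of the restriction $\mathbf{M}(\gm) \to \mathbf{M}(k)$ induced by the unit section, which is by definition $\mathbf{M}_{-1}(k)$. Conversely any such element extends uniquely to a homomorphism by Morel's universal property. For the inductive step, I would factor the universal symbol as $[t_1,\ldots,t_n] = [t_1,\ldots,t_{n-1}]\cdot [t_n]$ via the graded product on Milnor--Witt K-theory; this identifies $(\K^{\MW}_n)_{-1}$, obtained by contraction in the variable $t_n$, with $\K^{\MW}_{n-1}$. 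A morphism $\phi \colon \K^{\MW}_n \to \mathbf{M}$ therefore induces by contraction a morphism $\phi_{-1} \colon \K^{\MW}_{n-1} \to \mathbf{M}_{-1}$, and the inductive hypothesis applied to $\mathbf{M}_{-1}$ yields
\[
\hom(\K^{\MW}_n, \mathbf{M}) \iso \hom(\K^{\MW}_{n-1}, \mathbf{M}_{-1}) \iso (\mathbf{M}_{-1})_{-(n-1)}(k) = \mathbf{M}_{-n}(k),
\]
establishing Point (1).

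Point (2) is essentially the inductive step just described: the map $\hom(\K^{\MW}_n, \mathbf{M}) \to \hom(\K^{\MW}_{n-1}, \mathbf{M}_{-1})$ is by construction the contraction $\phi \mapsto \phi_{-1}$ composed with the identification $(\K^{\MW}_n)_{-1} \iso \K^{\MW}_{n-1}$. Naturality of contraction (and of the universal symbol under the isomorphism $\K^{\MW}_n \simeq (\K^{\MW}_{n-1}) \cdot \K^{\MW}_1$) yields the compatibility of this morphism with the identifications in Point (1); both sides map to the universal element in $\mathbf{M}_{-n}(k)$ determined by the image of $[t_1,\ldots,t_n]$.

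\textbf{Main obstacle.} The nontrivial ingredient is Morel's universal property of $\K^{\MW}_n$, which requires showing that a map of pointed sheaves $\gm^{\sma n} \to \mathbf{M}$ into a strictly $\aone$-invariant target automatically satisfies the defining relations of $\K^{\MW}_n$. The critical input is that the class $[t,1-t]$ becomes $\aone$-null upon pulling back along any unit-pointed morphism, a fact whose verification constitutes one of the central calculations of \cite[Chapters 3 and 6]{MField}. Given that input, and the identification $(\K^{\MW}_n)_{-1} \iso \K^{\MW}_{n-1}$ (which is also a consequence of Morel's presentation), the proof of the present lemma reduces to a formal induction.
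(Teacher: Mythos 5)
The paper does not contain its own proof of this lemma: it is quoted directly from \cite[Lemma 5.1.3]{AWW} for the reader's convenience, so there is no internal argument to compare against. That said, your proposal is built around the right mechanism --- Morel's freeness theorem identifying $\K^{\MW}_n$ as the free strongly $\aone$-invariant sheaf of abelian groups on the pointed sheaf $\gm^{\wedge n}$ --- and you correctly isolate that theorem as the sole nontrivial input.

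There is, however, a logical wrinkle in the way you organize the argument. Once you have Morel's universal property for all $n$ (which you state at the outset), Point (1) follows \emph{directly}: a morphism $\K^{\MW}_n \to \mathbf{M}$ is the same as a pointed map $\gm^{\wedge n} \to \mathbf{M}$, i.e.\ a section of $\mathbf{M}(\gm^{\times n})$ vanishing along every unit section, and this iterated kernel is by definition $\mathbf{M}_{-n}(k)$. No induction is needed. By instead running an induction on $n$, you write the chain
\[
\hom(\K^{\MW}_n, \mathbf{M}) \iso \hom(\K^{\MW}_{n-1}, \mathbf{M}_{-1}) \iso \mathbf{M}_{-n}(k),
\]
but the first isomorphism is precisely the content of Point (2), which you have not yet established; as written, your inductive step for (1) silently invokes (2) and your proof of (2) then refers back to ``the inductive step just described.'' This is a genuine circularity in the exposition, not merely a stylistic quibble: from what you actually construct in the inductive step (a map $\phi \mapsto \phi_{-1}$ together with the identification $(\K^{\MW}_n)_{-1} \cong \K^{\MW}_{n-1}$), you have produced a homomorphism $\hom(\K^{\MW}_n, \mathbf{M}) \to \hom(\K^{\MW}_{n-1}, \mathbf{M}_{-1})$, but nothing forces it to be bijective without an independent argument. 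The fix is the one you already have in hand: establish (1) directly from the universal property for every $n$, and then (2) is an immediate consequence --- both sides of the contraction map become $\mathbf{M}_{-n}(k)$ and the compatibility check reduces to tracking the universal symbol $[t_1,\ldots,t_n]$, which your final paragraph already sketches. In short, the approach and all the ingredients are correct; the ordering of the argument should be reversed so that the universal property carries (1) outright and (2) is deduced afterward.
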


\begin{lem}[{\cite[Lemma 5.1.5]{AWW}}]
\label{lem:contractionsfactor}
Fix a base field $k$. If $\phi: \K^{\MW}_n \to
\mathbf{M}$ is a morphism of sheaves such that
$\phi_{-j} =0$, then
\begin{enumerate}[noitemsep,topsep=1pt]
\item \label{i:cf1} Assuming $n \geq j \geq 0$, the morphism $\phi$ is trivial; and
\item \label{i:cf2} Assuming $0 \leq n < j$, the morphism $\phi$ factors through a morphism $\K^{\MW}_n/\mathbf{I}^{j} \to \mathbf{M}$.
\end{enumerate}
\end{lem}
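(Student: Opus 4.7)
The plan is to prove the two parts in sequence, reducing Part~(ii) to Part~(i) by composing with a power of $\eta$. Throughout I will use two formal properties of the contraction functor $\mathbf{N} \mapsto \mathbf{N}_{-1}$ on strictly $\aone$-invariant sheaves: it is functorial (so it preserves compositions and sends the zero morphism to zero), and iterated contractions stack as $(\phi_{-j})_{-(i-j)} = \phi_{-i}$ for $i \geq j \geq 0$.

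For Part~(\ref{i:cf1}), where $n \geq j \geq 0$, I would iterate the isomorphism of Lemma~\ref{lem:homfromkmwiscontraction}(2) and then invoke Lemma~\ref{lem:homfromkmwiscontraction}(1) to identify
\[
\hom(\K^{\MW}_n, \mathbf{M}) \isomto \mathbf{M}_{-n}(k).
\]
Unwinding the construction, $\phi$ corresponds under this identification to the element of $\mathbf{M}_{-n}(k)$ determined by the $n$-fold contraction $\phi_{-n}$. Since $\phi_{-n} = (\phi_{-j})_{-(n-j)}$ and $\phi_{-j} = 0$ by hypothesis, this element vanishes and hence $\phi = 0$. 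The boundary case $n = j = 0$ is trivial, as the $0$-fold contraction of $\phi$ is $\phi$ itself.

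For Part~(\ref{i:cf2}), where $j > n \geq 0$, my proposal is to consider the composite
\[
\psi := \phi \circ \eta^{j} \colon \K^{\MW}_{n+j} \longrightarrow \K^{\MW}_n \longrightarrow \mathbf{M},
\]
where $\eta^{j}$ denotes iterated multiplication by the Hopf element $\eta$. The image of $\eta^{j}\colon \K^{\MW}_{n+j} \to \K^{\MW}_n$ is precisely the subsheaf $\mathbf{I}^j \subset \K^{\MW}_n$ (i.e., $\mathbf{I}^j \cdot \K^{\MW}_n$, in the convention under which $\K^{\MW}_n/\mathbf{I}^j$ is defined), so the desired factorization through $\K^{\MW}_n/\mathbf{I}^j$ will follow once we show $\psi = 0$. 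By functoriality of contraction, $\psi_{-j} = \phi_{-j} \circ (\eta^{j})_{-j} = 0$, and Part~(\ref{i:cf1}) applied to $\psi$ (with source degree $n+j \geq j$ and vanishing $j$-th contraction) yields $\psi = 0$.

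The main obstacle is largely notational and concentrated in Part~(\ref{i:cf2}): one must verify that the contraction functor commutes with composition of morphisms of strictly $\aone$-invariant sheaves, and identify the image of $\eta^{j}\colon \K^{\MW}_{n+j} \to \K^{\MW}_n$ with the subsheaf $\mathbf{I}^j$ appearing in the statement. Both are standard consequences of Morel's framework, and once they are in place the lemma reduces to the formal diagram manipulations sketched above.
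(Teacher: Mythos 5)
The lemma you are proving is quoted from \cite[Lemma 5.1.5]{AWW} and is not reproved in the present paper, so I can only check your argument on its own terms. Part~(\ref{i:cf1}) is fine: iterating Lemma~\ref{lem:homfromkmwiscontraction}(2) and then applying Lemma~\ref{lem:homfromkmwiscontraction}(1) identifies $\phi$ with a class in $\mathbf{M}_{-n}(k)$ computed from an iterated contraction of $\phi$, and since $\phi_{-j}=0$ forces $\phi_{-n}=(\phi_{-j})_{-(n-j)}=0$, that class vanishes.

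In Part~(\ref{i:cf2}) there is a genuine off-by-$n$ error in the power of $\eta$. You compose with $\eta^{j}\colon \K^{\MW}_{n+j}\to\K^{\MW}_n$ and assert its image is the subsheaf $\mathbf{I}^j\subset\K^{\MW}_n$. In fact, for $m\geq 1$ the image of $\eta^m\colon \K^{\MW}_{n+m}\to\K^{\MW}_n$ is the copy of $\mathbf{I}^{n+m}$ sitting inside $\K^{\MW}_n$: this is visible from the fiber-product description $\K^{\MW}_r\cong \K^M_r\times_{\K^M_r/2}\mathbf{I}^r$, where multiplication by $\eta$ kills the Milnor factor and acts on the Witt factor by the inclusion $\mathbf{I}^{r}\hookrightarrow\mathbf{I}^{r-1}$, so a generator $[a_1]\cdots[a_{n+m}]$ is sent to the pair $\bigl(0,\langle\langle a_1,\dots,a_{n+m}\rangle\rangle\bigr)$ and such Pfister classes generate $\mathbf{I}^{n+m}$. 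Thus your argument with $\eta^j$ only shows that $\phi$ vanishes on $\mathbf{I}^{n+j}$, i.e., it only yields a factorization through $\K^{\MW}_n/\mathbf{I}^{n+j}$, which is a strictly weaker conclusion when $n\geq 1$. (The cases $n=0$, where the paper elsewhere uses the identification $\K^{\MW}_n/\mathbf{I}^{n+1}\cong\K^M_n$, make it clear that $\mathbf{I}^j$ in the statement means the $j$-th power of the fundamental ideal in the Witt ring embedded in $\K^{\MW}_n$, not $\mathbf{I}^{n+j}$.)

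The fix is immediate and keeps your strategy intact: compose with $\eta^{j-n}\colon \K^{\MW}_j\to\K^{\MW}_n$, which is a nontrivial power of $\eta$ since $j>n$, and whose image is exactly $\mathbf{I}^j\subset\K^{\MW}_n$. The composite $\psi'=\phi\circ\eta^{j-n}\colon\K^{\MW}_j\to\mathbf{M}$ satisfies $\psi'_{-j}=\phi_{-j}\circ(\eta^{j-n})_{-j}=0$, and Part~(\ref{i:cf1}) (with source degree $j\geq j$) gives $\psi'=0$, so $\phi$ kills $\mathbf{I}^j$ as required. With that correction, and with the mild bookkeeping at the boundary of Part~(\ref{i:cf1}) you already acknowledge, the argument is sound, and it is the natural reduction of Part~(\ref{i:cf2}) to Part~(\ref{i:cf1}) one would expect.
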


\subsubsection*{The computation}
Granted the results above, we are now in a position to establish our refinement of \cite[Theorem 5.2.5]{AWW}. Recall from \ref{eqn:ehps23} that there is an exact sequence of the form
\[
 \bpi_{5+j\alpha}^{\aone}(S^{5+6\alpha}) \stackrel{\mathrm{P}_k}{\longrightarrow} \bpi_{3+j\alpha}^{\aone}(S^{2+3\alpha}) \stackrel{\mathrm{E}_k}{\longrightarrow} \bpi_{4+j\alpha}^{\aone}(S^{3+3\alpha}) \longrightarrow 0.
\]
The sheaf $\bpi_{3+j\alpha}^{\aone}(S^{2+3\alpha})$ was described in greater detail at the beginning of Section \ref{ss:realrealizationhomotopysheaves}: for $j = 0$ it is an extension of $\mathbf{GW}^3_4$ by a sheaf called $\mathbf{F}_5$. Since $\bpi_{5}^{\aone}(S^{5+6\alpha}) = \K^{MW}_6$, $\hom(\K^{MW}_6,\mathbf{GW}^3_4) = (\mathbf{GW}^3_4)_{-6}$ by Lemma \ref{lem:homfromkmwiscontraction}, and $(\mathbf{GW}^3_4)_{-6} = 0$ by \cite[Proposition 3.4.3]{AsokFaselpi3a3minus0}, we conclude that (i) the image of the map $\K^{MW}_6 \to \bpi_{3}^{\aone}(S^{2+3\alpha})$ is contained in $\mathbf{F}_5$ and (ii) $\bpi_{3}^{\aone}(S^{2+3\alpha}) \to \mathbf{GW}^3_4$ factors through a map $\bpi_{4}^{\aone}(S^{3+3\alpha}) \to \mathbf{GW}^3_4$.

\begin{prop}
\label{prop:unstablecomputation}
There is an exact sequence
 \begin{equation}
 \label{eq:21}
 \mathbf{S}_5 \longrightarrow \bpi_4^{\aone}(S^{3+3\alpha}) \longrightarrow \GW_4^3 \longrightarrow 0
 \end{equation}
which becomes short exact after $4$-fold contraction.
\end{prop}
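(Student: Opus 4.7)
The plan is to combine the EHP sequence \eqref{eqn:ehps23} at $j=0$ with the extension description of $\bpi_3^{\aone}(S^{2+3\alpha})$ (as an extension of $\mathbf{GW}^3_4$ by $\mathbf{F}_5$) and then identify the resulting cokernel as a quotient of $\mathbf{S}_5$ by exploiting the presentation $\mathbf{T}_5 = \mathbf{S}_5 \times_{\K^M_5/2} \mathbf{I}^5 \twoheadrightarrow \mathbf{F}_5$.

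A diagram chase using the two observations in the paragraph immediately preceding the proposition together with \eqref{eqn:ehps23} yields a short exact sequence
\[
0 \longrightarrow \mathbf{F}_5/\operatorname{im}(\mathrm{P}_k) \longrightarrow \bpi_4^{\aone}(S^{3+3\alpha}) \longrightarrow \mathbf{GW}^3_4 \longrightarrow 0.
\]
Projection of $\mathbf{T}_5$ onto its first factor supplies
\[
0 \longrightarrow \mathbf{I}^6 \longrightarrow \mathbf{T}_5 \longrightarrow \mathbf{S}_5 \longrightarrow 0
\]
(here $\operatorname{char} k \neq 2$ together with the degree-$5$ Milnor conjecture identify $\ker(\mathbf{I}^5 \twoheadrightarrow \K^M_5/2)$ with $\mathbf{I}^6$). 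Thus the composite $\mathbf{T}_5 \twoheadrightarrow \mathbf{F}_5 \twoheadrightarrow \mathbf{F}_5/\operatorname{im}(\mathrm{P}_k)$ will descend to an epimorphism $\mathbf{S}_5 \twoheadrightarrow \mathbf{F}_5/\operatorname{im}(\mathrm{P}_k)$ as soon as the image of $\mathbf{I}^6$ in $\mathbf{F}_5$ is contained in $\operatorname{im}(\mathrm{P}_k)$.

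I expect this last inclusion to be the crux, and I plan to establish it via contraction. By Lemma \ref{lem:homfromkmwiscontraction}, $\hom(\K^{MW}_6, \mathbf{F}_5) \cong \mathbf{F}_{5,-6}(k)$, and contracting the defining exact sequence $\mathbf{GW}^3_5 \to \K^{MW}_5 \to \mathbf{F}_5 \to 0$ six times, together with \cite[Proposition 3.4.3]{AsokFaselpi3a3minus0}, identifies $\mathbf{F}_{5,-6}$ with $\mathbf{W}$. Proposition \ref{prop:pi6contrvanish} then implies that $(\mathrm{P}_k)_{-6}$ is the canonical surjection $\mathbf{GW}(k) \twoheadrightarrow \mathbf{W}(k)$, hence corresponds to a unit of $\mathbf{W}(k)$. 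The composite $\K^{MW}_6 \twoheadrightarrow \mathbf{I}^6 \hookrightarrow \mathbf{T}_5 \twoheadrightarrow \mathbf{F}_5$ has $6$-fold contraction of the same form, so by the uniqueness in Lemma \ref{lem:homfromkmwiscontraction} it differs from $\mathrm{P}_k$ only by pre-composition with an automorphism of $\K^{MW}_6$; in particular, the two maps have the same image, delivering the desired inclusion.

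For the final assertion, the kernel of $\mathbf{S}_5 \twoheadrightarrow \mathbf{F}_5/\operatorname{im}(\mathrm{P}_k)$ unwinds as the image of the composite $\mathbf{GW}^3_5 \to \K^{MW}_5 \to \K^M_5 \to \mathbf{S}_5$. Karoubi periodicity for $\mathbf{GW}^3_\bullet$, combined with Proposition \ref{prop:propertiesofsn} (which presents $\mathbf{S}_5$ as a quotient of $\K^M_5/24$), will reduce the vanishing of this image after $4$-fold contraction to a small explicit check; this gives the required short exactness after contraction and completes the proof.
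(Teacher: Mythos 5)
Your broad strategy is sound and closely parallels the paper's, but the middle step takes a different tack and has several gaps as written.

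The paper proves the key inclusion $\operatorname{im}(\mathbf{I}^6\to\mathbf{F}_5)\subseteq\operatorname{im}(\mathrm{P}_k)$ (in fact, equality) structurally: it lifts $\mathrm{P}_k$ \emph{uniquely} to a morphism $\K^{\MW}_6\to\mathbf{T}_5$ using the isomorphism $\hom(\K^{\MW}_6,\mathbf{T}_5)\iso\hom(\K^{\MW}_6,\mathbf{F}_5)$ (which holds because $\mathbf{T}_5\to\mathbf{F}_5$ is an isomorphism after $4$-fold contraction), then observes that the lift must factor through $\mathbf{I}^6\hookrightarrow\mathbf{T}_5$ because $\hom(\K^{\MW}_6,\mathbf{S}_5)=0$, and finally deduces from Proposition~\ref{prop:pi6contrvanish} that the factorization $\K^{\MW}_6\to\mathbf{I}^6$ is surjective. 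You instead start from the canonical surjection $\K^{\MW}_6\twoheadrightarrow\mathbf{I}^6$ and compare its composite into $\mathbf{F}_5$ with $\mathrm{P}_k$; this can be made to work, but notice that it uses the \emph{same} two hom-computations in disguise: to show the $6$-fold contraction of your composite is a unit of $\W(k)$ you need $(\mathbf{T}_5)_{-6}\iso(\mathbf{F}_5)_{-6}$ and $(\mathbf{S}_5)_{-6}=0$, which are precisely the paper's ingredients. So your argument is not independent, just re-arranged, and in the re-arranged form the crucial claim that the composite ``has $6$-fold contraction of the same form'' is asserted rather than proved.

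There is also an error in the conclusion you draw from that comparison. You claim that two maps $\K^{\MW}_6\to\mathbf{F}_5$ whose $6$-fold contractions are both units of $\W(k)$ ``differ only by pre-composition with an automorphism of $\K^{\MW}_6$.'' Pre-composition with endomorphisms of $\K^{\MW}_6$ acts on $\hom(\K^{\MW}_6,\mathbf{F}_5)\iso\W(k)$ through the ring map $\operatorname{End}(\K^{\MW}_6)\iso\GW(k)\to\W(k)$, and a unit of $\W(k)$ need not lift to a unit of $\GW(k)$, so no such automorphism need exist. The correct way to finish is a two-sided containment: since the surjection $\GW(k)\twoheadrightarrow\W(k)$ hits the ratio $u v^{-1}$ (and its inverse), one finds elements of $\GW(k)$ whose pre-compositions transform each map into the other, and hence the two images coincide. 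Finally, your last paragraph sketching short-exactness after $4$-fold contraction is only a plan, and is more complicated than what is needed: since $\mathbf{T}_5\to\mathbf{F}_5$ is an isomorphism after $4$-fold contraction and $\operatorname{im}(\mathrm{P}_k)=\operatorname{im}(\mathbf{I}^6\to\mathbf{F}_5)$, the induced map $\mathbf{S}_5=\mathbf{T}_5/\mathbf{I}^6\to\mathbf{F}_5/\operatorname{im}(\mathrm{P}_k)$ is already an isomorphism after $4$-fold contraction; no Karoubi periodicity or explicit check is required.
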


\begin{proof}
At the beginning of Section \ref{ss:realrealizationhomotopysheaves} we recalled from \cite[Theorem 4.3.1]{AsokFaselpi3a3minus0} the fact that there is a morphism $\mathbf{T}_5 \to \mathbf{F}_5$ that becomes an isomorphism after $4$-fold contraction. In particular, we conclude that the map $\hom(\K^{MW}_6,\mathbf{T}_5) \to \hom(\K^{MW}_6,\mathbf{F}_5)$ is an isomorphism by Lemma \ref{lem:homfromkmwiscontraction}. Therefore, the map $\K^{MW}_6 \to \mathbf{F}_5$ from the EHP sequence lifts uniquely to a morphism $\K^{MW}_6 \to \mathbf{T}_5$. By definition, $\mathbf{T}_5$ is a fiber product of $\mathbf{S}_5$ and $\mathbf{I}^5$ along $\K^{\mathrm{M}}_5/2$ and there is an exact sequence of the form
\[
0 \longrightarrow \mathbf{I}^6 \longrightarrow \mathbf{T}_5 \longrightarrow \mathbf{S}_5 \longrightarrow 0.
\]
However, since $\hom(\K^{MW}_6,\mathbf{S}_5) = 0$ by combining \cite[Lemma 2.7 and Corollary 3.11]{AsokFaselSpheres}. Therefore, the image of the map $\K^{MW}_6 \to \mathbf{T}_5$ factors through a map $\K^{MW}_6 \to \mathbf{I}^6$.

Now, we appeal to Proposition \ref{prop:pi6contrvanish}. Indeed, it follows from this vanishing result that the map
\[
\bpia_{5+6\alpha}(S^{5+6\alpha}) \longrightarrow \bpia_{3+6\alpha}(S^{2+3\alpha})
\]
is necessarily surjective. Unwinding the definitions, we conclude that the map $\K^{MW}_6 \to \mathbf{I}^6$ of the previous paragraph is surjective, and we obtain the required exact sequence. Exactness after $4$-fold contraction is immediate from \cite[Theorem 4.3.1]{AsokFaselpi3a3minus0}.
\end{proof}

%We begin with the diagram
%\begin{equation}
% \label{eq:20}
% \xymatrix{ & \KMW_6 \ar[d] \\ \mathbf{T}_5 \ar[d] \ar[r] & \bpi^{\aone}_3(S^{2+3\alpha}) \ar[d] \ar[r] & \GW^3_4
% \ar[r] & 0 \\ \mathbf{S}_5 \ar@{-->}[r] \ar[d] &
% \bpi^{\aone}_4(S^{3+3\alpha}) \ar@{..>}[ur] \ar[d] & \\ 0 & 0 }
%\end{equation}
%in which the middle row is the exact sequence of \ben{reference}. The column is a portion of the EHP sequence. The row
%becomes a short exact sequence after $4$-fold contraction \ben{reference}. The dotted
%arrow exists since $\Hom(\KMW_6 , \GW^3_4) = (\GW^3_4)_{-5}(k) = 0$, and the dashed arrow exists for a similar reason
%because $\bpi_{4+6\alpha}(S^{3+3\alpha})=0$. This gives \eqref{eq:20}.

%We now consider \eqref{eq:21} after $4$-fold contraction. We have a diagram
%\begin{equation}
% \label{eq:20a}
% \xymatrix{ & & \KMW_2 \ar@{..>}[dl]\ar[d] \\ 0 \ar[r] &(\mathbf{T}_5)_{-4} \ar[r] \ar[d] & \bpi^{\aone}_{3+ 4 \alpha}(S^{2+3\alpha})
% \ar[d] \ar[r] & \GW^3_0 \ar@{=}[d] \ar[r] & 0
% \\ & (\mathbf{S}_5)_{-4} \ar[d] \ar[r] &
% \bpi^{\aone}_{4+4\alpha}(S^{3+3\alpha}) \ar[r] \ar[d] & \GW^3_0 \ar[r] & 0 \\ & 0 & 0 }
%\end{equation}
%where the dotted arrow exists because the composite $\KMW_2 \to \GW_0^3$ necessarily vanishes. A diagram chase now shows
%that \eqref{eq:21} becomes short exact after $4$-fold contraction.

\subsubsection*{Comparison with the stable result}
In order to complete the calculation of $\bpia_4(S^{3+ 3\alpha})$ stated in the introduction, we compare with calculations that have been carried out $\PP^1$-stably in \cite{RSO}. To this end, recall that there is a stabilization map $S^{3+3\alpha} \to \Omega_{\PP^1}^\infty \Sigma_{\PP^1}^{\infty} S^{3+3\alpha}$. By \cite[Theorem 4.4.5]{AsokFaselKO} there are a sequence of morphisms $S^{(n-1)+n\alpha} \to \Omega^{-n}_{\pone}O$ that stabilize to the degree map from the motivic sphere spectrum. As a consequence, the following diagram commutes:
\[
\xymatrix{
\bpi_4^{\aone}(S^{3+3\alpha}) \ar[r]\ar[d] & \mathbf{GW}^3_4 \ar@{=}[d]\\
\bpi_4^{\aone}(\Omega_{\pone}^{\infty}\Sigma_{\pone}^{\infty} S^{3+3\alpha}) \ar[r] & \mathbf{GW}^3_4.
}
\]
In combination with Proposition~\ref{prop:unstablecomputation} we observe that there is an exact sequence of strictly $\aone$-invariant sheaves of the form
\[
\K^{\mathrm{M}}_5/24 \longrightarrow \bpi_4^{\aone}(S^{3+3\alpha}) \longrightarrow \mathbf{GW}^3_4 \longrightarrow 0.
\]
Moreover, both the groups appearing in this extension are stably non-trivial.

Now, suppose $k$ is a field having characteristic exponent $p$ and consider $\Z[\frac{1}{p}]$. Work of R{\"o}ndigs, Spitzweck and {\O}stv{\ae}r \cite[Theorem 5.5]{RSO} establishes a short exact sequence of the form:
\begin{equation}
 \label{eq:24}
 0 \longrightarrow \KM_5/24 \tensor \Z[\frac{1}{p}] \longrightarrow \bpi^{\aone}_4( \Omega_{\PP^1}^\infty \Sigma_{\PP^1}^{\infty} S^{3+3\alpha}) \tensor \Z[\frac{1}{p}] \longrightarrow \GW_4^3 \tensor \Z[\frac{1}{p}]
 \longrightarrow 0.
\end{equation}
We note here that in \cite{RSO} the Hermitian K-theory spectrum is denoted $\mathbf{KQ}$; we have chosen to follow Schlichting's notation since it was consistent with \cite{AWW}. Furthermore, the computation in \cite{RSO} does not look exactly like this, but their statement implies this one because the map out of $\bpi^{\aone}_4( \Omega_{\PP^1}^\infty \Sigma_{\PP^1}^{\infty} S^{3+3\alpha})$ in their description is the unit map from the sphere spectrum to the spectrum $\mathbf{KQ}$.

In light of the discussion above involving \cite[Theorem 4.4.5]{AsokFaselKO}, the stabilization map then induces a commutative diagram of the form:
\begin{equation}
 \label{eq:25}
 \xymatrix{ & \KM_5/24 \ar^f[d] \ar[r] & \bpi^{\aone}_4(S^{3+3\alpha}) \ar[r] \ar[d] & \GW_4^3 \ar@{=}[d] \ar[r] & 0 \\ 0
 \ar[r] &
 \KM_5/24 \ar[r] & \bpi^{\aone}_4( \Omega_{\PP^1}^\infty \Sigma_{\PP^1}^{\infty} S^{3+3\alpha} ) \ar[r] & \GW_4^3
 \ar[r] & 0; }
\end{equation}
some comments are in order about this morphism of exact sequences. We have suppressed the tensoring with $\Z[1/p]$ on the two terms on the left in the bottom row since $p$ will, momentarily, be taken to be different from $2$ or $3$; our goal is to analyze the left-hand vertical map and, to this end, we use the following result about endomorphisms of unramified Milnor K-theory sheaves.

\begin{lem}
 \label{lem:selfmapquotientmilnorK}
 Let $n \ge 0$ and let $m$ be a nonnegative integer. Then
 \[ \Hom(\KM_n/m, \KM_n/m) = \ZZ/(m) \]
 generated by the identity map.
\end{lem}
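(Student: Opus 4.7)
My plan is to compute $\Hom(\KM_n/m, \KM_n/m)$ by propagating along the tower of surjections
\[
\KMW_n \twoheadrightarrow \KM_n = \KMW_n/\eta\KMW_{n+1} \twoheadrightarrow \KM_n/m,
\]
at each stage invoking Lemma~\ref{lem:homfromkmwiscontraction} to convert the Hom into a contraction computation. Assuming $n \geq 1$ (the cases $n = 0$ or $m \in \{0,1\}$ are immediate and can be dispatched separately), the lemma gives
\[
\Hom(\KMW_n, \KM_n/m) \;\cong\; (\KM_n/m)_{-n}(k) \;\cong\; \KM_0(k)/m \;\cong\; \Z/m,
\]
using that contraction is exact on strictly $\aone$-invariant sheaves, commutes with quotients by the multiplication-by-$m$ map, and satisfies $(\KM_n)_{-n} = \KM_0 = \Z$.

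I would then argue that every element of this $\Z/m$ actually arises from an endomorphism of $\KM_n/m$. For the first descent: a morphism $\phi\colon \KMW_n \to \KM_n/m$ factors through $\KM_n$ precisely when the composite $\KMW_{n+1} \xrightarrow{\,\cdot\,\eta\,} \KMW_n \xrightarrow{\phi} \KM_n/m$ vanishes, and this is automatic because Lemma~\ref{lem:homfromkmwiscontraction} gives $\Hom(\KMW_{n+1}, \KM_n/m) \cong (\KM_n/m)_{-n-1}(k) = 0$ (Milnor K-theory vanishes in negative degrees). For the second descent: a morphism $\KM_n \to \KM_n/m$ factors through $\KM_n/m$ iff it is annihilated by $m$, which is automatic because the target is $m$-torsion. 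Combining these, the two inclusions
\[
\Hom(\KM_n/m, \KM_n/m) \hookrightarrow \Hom(\KM_n, \KM_n/m) \hookrightarrow \Hom(\KMW_n, \KM_n/m)
\]
are equalities, and the common group is $\Z/m$.

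To identify the generator, I would observe that under the contraction isomorphism of Lemma~\ref{lem:homfromkmwiscontraction}, the identity endomorphism of $\KM_n/m$ corresponds to the class of $1 \in \KM_0(k)/m$, since it sends the universal symbol $\{t_1,\ldots,t_n\}$ over $\Gm^n$ to itself mod $m$. The argument is formal and I do not anticipate a substantive obstacle; the only point that needs verification is that contraction commutes with the multiplication-by-$m$ quotient, but this is a direct consequence of the left-exactness of contraction and the fact that multiplication by $m$ is a morphism of strictly $\aone$-invariant sheaves whose cokernel is $\KM_n/m$.
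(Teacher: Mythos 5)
Your proposal is correct and uses the same core mechanism as the paper's proof: embed $\Hom(\KM_n/m,\KM_n/m)$ into $\Hom(\KMW_n,\KM_n/m)\cong (\KM_n/m)_{-n}(k)=\Z/m$ via Lemma~\ref{lem:homfromkmwiscontraction}, then observe that the identity endomorphism corresponds to a generator of $\Z/m$ under $n$-fold contraction. The explicit two-step factorization argument (through $\KM_n$ using $(\KM_n/m)_{-n-1}=0$, then through $\KM_n/m$ by $m$-torsion) is a nice check of surjectivity but is logically redundant once you know the identity hits a generator of the cyclic group, which is the shortcut the paper takes.
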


\begin{proof}
The case $n=0$ is trivial. We observe that if $n\ge 1$, then by Lemma~\ref{lem:homfromkmwiscontraction}, $\Hom(\KM_n/m, \KM_n/m)$ is a subgroup of $(\KM_n/m)_{-n} = \KM_0/m = \ZZ/(m)$. But the $n$-fold contraction
 functor sends the identity map in $\Hom(\KM_n/m, \KM_n/m)$ to a generator of
 $\Hom(\KM_0/m, \KM_0/m) = \ZZ/(m)$.
\end{proof}

For an integer $n \geq 2$, set
\[
\nu_n := \Sigma^{(n-2)+(n-2)\alpha} \nu: S^{((n+1)+(n+2)\alpha)} \to S^{n+n\alpha}.
\]
We will abuse notation and write $\nu$ for the $\pone$-stable homotopy class corresponding to $\nu$. With this notation, we now establish Theorem~\ref{thmintro:maincomputation} from the introduction.

\begin{thm}
\label{thm:maincomputation}
Let $k$ be a field having characteristic different from $2$ or $3$.
\begin{enumerate}[noitemsep,topsep=1pt]
\item There is a short exact sequence of the form:
\begin{equation*}
\xymatrix{ 0 \ar[r] & \KM_5/24 \ar[r]^-{{\nu_3}_*} & \bpi^{\aone}_4(S^{3+3\alpha}) \ar[r] & \GW_4^3 \ar[r] & 0.}
\end{equation*}
\item For any integer $n \geq 4$, the morphism ${\nu_n}_*: \K^{MW}_{n+2} \to \bpi_{n+1}^{\aone}(S^{n+n\alpha})$ factors through an injection $0 \to \K^{\mathrm{M}}_{n+2}/24 \to \bpi_{n+1}^{\aone}(S^{n+n\alpha})$.
\end{enumerate}
\end{thm}

\begin{proof}
We would like to deduce that Diagram \eqref{eq:25} is essentially an isomorphism of short exact sequences. First of all, we show $f$ is an isomorphism. By Lemma~\ref{lem:selfmapquotientmilnorK}, it suffices to check this after $5$-fold contraction. The
diagram becomes
\begin{equation}
 \label{eq:25-5}
 \xymatrix{ & \Z/(24) \ar^{f_{-5}}[d] \ar[r] & \bpi^{\aone}_{4+5\alpha}(S^{3+3\alpha}) \ar[r] \ar[d] & 0 \ar[d] \ar[r] & 0 \\ 0
 \ar[r] &
 \Z/(24) \ar[r] & \bpi^{\aone}_{4+5\alpha}( \Omega_{\PP^1}^\infty \Sigma_{\PP^1}^{\infty} S^{3+3\alpha} ) \ar[r] &0
 \ar[r] & 0. }
\end{equation}
Since $\Z/(24)$ is independent of the field, we may verify $f$ is an isomorphism after passage to an algebraically
closed field. In characteristic $0$, it follows from \cite[Remark 5.7]{RSO} or \cite[Corollary 5.2.7]{AWW} that the $\Z/(24)$ in both the source and the target of $f_{-5}$ is generated by the motivic Hopf map $\nu_3$. Since $24$ is divisible only by $2$ and $3$, it follows using \'etale realization and lifting from characteristic $0$ that $\nu$ generates the stable group. Thus, in either case, the map sends a generator to a generator and must be an isomorphism. Therefore, the induced map $\KM_5/24 \to \KM_5/24$ is an isomorphism. Since there is an epimorphism $\K^{\mathrm{M}}_5/24 \to \mathbf{S}_5$ factoring this isomorphism, we also conclude that $\mathbf{S}_5 \cong \KM_5/24$.

The second point is established in an entirely similar fashion and we leave the details to the reader. In brief, \cite[Theorem 5.5]{RSO} gives a computation of $\bpi_{n+1}^{\aone}(\Omega_{\pone}^{\infty}\Sigma_{\pone}^{\infty}S^{n+n\alpha})$ for arbitrary integers $n$. Thus, one simply repeats the arguments above and again appeals to Lemma~\ref{lem:selfmapquotientmilnorK} to conclude.
\end{proof}

%In characteristic $0$, for instance, one may restrict to the case where $k = \cplx$. Since $\bpi^{\aone}_{4+5\alpha}(S^{3+3\alpha}) \to \pi_{9}(S^6) \iso \Z/24$ is an isomorphism \ben{why? this should appear in the realizations section}, it's sufficient to show that $f_{-5}$ is an isomorphism after complex realization. But this is simply the observation that $\pi_{9}(S^6)$ is already in the stable range so that $f_{-5}(\cplx) : \pi_9(S^6) \to \pi_9(\Omega^\infty \Sigma^\infty S^6)$ is an isomorphism.

\subsubsection*{Suslin's conjecture in degrees $3$ and $5$}
We now analyze Suslin's conjecture, using the ideas developed above. In Remark~\ref{rem:degree3} we discuss Suslin's conjecture in degree $3$ in our context. The next result establish Theorem~\ref{thmintro:suslinsconjecture} from the introduction.

\begin{thm}
\label{thm:suslinsconjecturedegree5}
For any infinite field $k$ having characteristic unequal to $2$ or $3$ and any essentially smooth local $k$-algebra $A$, the Suslin--Hurewicz map $K^Q_5(A) \to K^M_5(A)$ has image precisely $4! K^M_5(A)$.
\end{thm}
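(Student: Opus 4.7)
The plan is to deduce the theorem directly from the machinery assembled in the two previous sections, so the proof should be quite short. By Theorem~\ref{thm:suslinmorphismscoincide}(2), Suslin's conjecture in degree $5$ for an essentially smooth local $k$-algebra $A$ is equivalent to the assertion that the canonical epimorphism
\[
\K^M_5/24 \longrightarrow \mathbf{S}_5
\]
(from Proposition~\ref{prop:propertiesofsn}(4), since $(5-1)! = 24$) is an isomorphism on sections over $A$. So I would simply invoke the identification $\mathbf{S}_5 \cong \K^M_5/24$ established in the course of proving Theorem~\ref{thm:maincomputation}, take sections on $\Spec A$, and conclude.

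In slightly more detail: the proof of Theorem~\ref{thm:maincomputation} shows, under the characteristic hypothesis, that the map $f\colon \K^M_5/24 \to \K^M_5/24$ appearing in diagram \eqref{eq:25} is an isomorphism; since this map factors through the epimorphism $\K^M_5/24 \twoheadrightarrow \mathbf{S}_5$ of Proposition~\ref{prop:propertiesofsn}(4), both of the maps in the factorization $\K^M_5/24 \twoheadrightarrow \mathbf{S}_5 \twoheadrightarrow \K^M_5/24$ must be isomorphisms of Nisnevich (equivalently Zariski) sheaves. Evaluating on the local ring $A$, which is essentially smooth over $k$, yields that $\K^M_5/24(A) \to \mathbf{S}_5(A)$ is an isomorphism of abelian groups.

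Combining this with Theorem~\ref{thm:suslinmorphismscoincide}(1), which identifies $\mathbf{S}_5(A)$ with the cokernel of Suslin's Hurewicz homomorphism $K^Q_5(A) \to K^M_5(A)$, one concludes that this cokernel equals $K^M_5(A)/24 \, K^M_5(A)$, i.e., the image is precisely $24 \, K^M_5(A) = 4! \, K^M_5(A)$.

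There is no genuine obstacle at this stage: all the work has already been done, and the remaining task is purely formal bookkeeping. The only point worth mentioning is that one needs $A$ to have infinite residue field (to apply the Nesterenko--Suslin/Guin results feeding into Theorem~\ref{thm:suslinmorphismscoincide}), and that the regularity of $A$ is what allows the identification of Kerz of $K^M_n(A)$ with $\K^M_n(A)$, as recorded in the remark following Lemma~\ref{lem:relativeHurewiczinvariantscomputation}.
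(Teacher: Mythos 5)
Your proposal is correct and takes essentially the same approach as the paper's proof: reduce via Theorem~\ref{thm:suslinmorphismscoincide}(2) to the assertion that the canonical epimorphism $\K^M_5/24 \to \mathbf{S}_5$ is an isomorphism, extract that from the factorization argument in the proof of Theorem~\ref{thm:maincomputation}, and conclude by evaluating the sheaf isomorphism on $\Spec A$. The supplementary remarks you add (that the residue field is automatically infinite since $A$ is essentially smooth over the infinite field $k$, and that Kerz's theorem identifies $K^M_n(A)$ with $\K^M_n(A)$) are consistent with background material the paper invokes elsewhere.
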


\begin{proof}
In the proof of Theorem \ref{thm:maincomputation} we showed that the canonical epimorphism $\K^{\mathrm{M}}_5/24 \to \mathbf{S}_5$ is an isomorphism of strictly $\aone$-invariant sheaves. The result then follows immediately from Theorem \ref{thm:suslinmorphismscoincide} by evaluation at stalks (recall that $\aone$-homotopy sheaves have the property that the Zariski sheafification is already a Nisnevich sheaf, and so one may evaluate at Zariski stalks).
\end{proof}

\begin{rem}
\label{rem:degree3}
Our techniques also show that the Milnor conjecture on quadratic forms implies Suslin's conjecture in degree $3$, though we leave the details to the interested reader. Granted Theorem~\ref{thm:suslinmorphismscoincide}, Suslin's conjecture is essentially contained in F. Morel's identification of $\bpi_2^{\aone}(BGL_2) = \K^{MW}_2$ \cite[Theorem 7.20]{MField}. In more detail, Morel showed that the Milnor--Witt K-theory sheaves $\K^{MW}_n$ may be realized as the fiber product sheaves $\K^{\mathrm{M}}_n \times_{\K^{\mathrm{M}}_n/2} \mathbf{I}^n$. Granted the Milnor conjecture on quadratic forms, there are associated short exact sequences $0 \to \mathbf{I}^{n+1} \to \K^{MW}_n \to \K^{\mathrm{M}}_n \to 0$ and $0 \to 2\K^{\mathrm{M}}_n \to \K^{MW}_n \to \mathbf{I}^n \to 0$. The long exact sequence in $\aone$-homotopy sheaves associated with the $\aone$-fiber sequence ${\mathbb A}^3 \setminus 0 \to BGL_2 \to BGL_3$ takes the form
\[
\bpi_3^{\aone}(BGL_3) \longrightarrow \K^{MW}_3 \longrightarrow \K^{MW}_2 \longrightarrow \KM_2 \longrightarrow 0.
\]
By appealing to \'etale and real realization along the lines of the proof of Lemma~\ref{lem:scunit}, one may check if $k$ is a field having characteristic unequal to $2$, the epimorphism $\K^{MW}_2 \to \KM_2$ is the canonical epimorphism with kernel $\mathbf{I}^3$ and the induced map $\K^{MW}_3 \to \mathbf{I}^3$ is the canonical epimorphism with kernel $2\KM_3$. It follows that the image of $\bpi_3^{\aone}(BGL_3)$ in $\K^{MW}_3$, i.e., the image of $\psi_3$, is precisely $2\KM_3$, i.e., Suslin's conjecture holds in degree $3$.
\end{rem}

{\begin{footnotesize}
\raggedright
\bibliographystyle{alpha}
\bibliography{SuslinDegree5}
\end{footnotesize}
}
\Addresses
\end{document}